\documentclass{article}

\usepackage[margin=1in]{geometry}
\usepackage[utf8]{inputenc}
\usepackage{csquotes}
\usepackage{amsfonts}
\usepackage{amsmath,amssymb,accents,empheq}
\usepackage{amsthm} 
\usepackage{enumitem}
\usepackage{subcaption}
\usepackage{graphicx}
\usepackage{float}
\usepackage{xcolor}
\usepackage{hyperref}
\usepackage{booktabs}

\usepackage{cleveref}
\usepackage{ulem}

\usepackage{algorithm}
\usepackage{algpseudocode}
\numberwithin{equation}{section}

\hypersetup{
    colorlinks=true,
    linkcolor=green,
    citecolor=green,
    filecolor=green,      
    urlcolor=green,
}

\newtheorem{theorem}{Theorem}[section]
\newtheorem{lemma}{Lemma}[section]

\newcommand{\email}[1]{\href{mailto:#1}{#1}}

\usepackage{listings}
 
\definecolor{codegreen}{rgb}{0,0.6,0}
\definecolor{codegray}{rgb}{0.5,0.5,0.5}
\definecolor{codepurple}{rgb}{0.58,0,0.82}
\definecolor{backcolour}{rgb}{0.95,0.95,0.92}
\title{A conservative adaptive rank method for the Wigner-Poisson system}
\author{Andrew J. Christlieb 
\thanks{Department of Computational Mathematics, Science and Engineering, Michigan State University, East Lansing, MI, 48824}
\and 
Sining Gong \footnotemark[1]
\and 
F. Alejandro Padilla-Gomez \thanks{Corresponding author. Department of Computational Mathematics, Science and Engineering, Michigan State University, East Lansing, MI, 48824 (\email{padill77@msu.edu})}
\and 
Jing-Mei Qiu \thanks{Department of Mathematical Sciences, University of Delaware, Newark, DE, 19716}}

\begin{document}

\maketitle

\begin{abstract}
    We propose a conservative adaptive rank method for the 1D1V Wigner-Poisson system. The method addresses a central challenge in deterministic quantum kinetic simulations: reducing the cost of phase-space evolution without destroying the macroscopic invariants that determine physical fidelity. The scheme combines a sampling-based adaptive rank  Wigner-Poisson update \cite{christlieb2025sampling} with a conservative macroscopic correction. A conservative density-momentum solve supplies local macroscopic updates, a Fermi-Dirac-type reconstruction transfers these updates to the kinetic solution, and a global quadratic moment correction enforces the discrete total energy constraint at the kinetic level. Unlike the Maxwell-Boltzmann-type correction often used in classical kinetic settings, the proposed reconstruction is based on a Fermi-Dirac-type form motivated by the quantum-statistical structure of the model. The corrected state is incorporated into an ACA-SVD representation, allowing the numerical rank to adapt to the phase-space complexity generated by the nonlocal Wigner operator and the self-consistent Poisson field.

    Numerical experiments for the two-stream instability, strong Landau damping, and bump-on-tail instability show that the method captures benchmark Wigner-Poisson dynamics for several values of the quantum parameter H, maintains bounded adaptive ranks, and preserves the specified global discrete invariants with conservation errors near machine precision in the reported diagnostics. We also compare the present formulation, which uses a local density-momentum macroscopic correction together with a global total energy correction, with a related globally conservative formulation for mass, momentum, and energy \cite{christlieb2026structurepreservingadaptiverankapproachhighdimensional}. The two approaches produce nearly identical phase-space and diagnostic results for the periodic benchmark test considered here. This comparison indicates that both local density-momentum correction and fully global moment correction can be made compatible with adaptive rank compression for Wigner-Poisson dynamics in the tested 1D1V periodic setting.
    
\end{abstract}

\section{Introduction}

As a phase-space reformulation of quantum mechanics, the Wigner equation captures tunneling, interference and nonlocal quantum effects in a transport form that is close enough to classical kinetics to invite deterministic PDE discretization \cite{wigner1932quantum, markowich2012semiconductor, weinbub2018recent}. For the self-consistent 1D1V Wigner-Poisson system, however, that conceptual advantage becomes a numerical liability. The solution is oscillatory and sign-indefinite in momentum, the Wigner operator is nonlocal and pseudodifferential, and the Poisson coupling feeds back a global field at every step \cite{markowich2012semiconductor, weinbub2018recent, ringhofer1990spectral, ringhofer1992spectral}. The central computational tension is therefore sharper than a standard accuracy versus cost tradeoff, one wants compression of phase-space complexity, but not at the price of corrupting the macroscopic structure on which physical credibility depends \cite{einkemmer2021mass, einkemmer2023robust, coughlin2024robust, GuoVlasovFlowMap2022, guo2024conservative, guo2024local}.

This issue is important because Wigner-Poisson models remain a standard description for nonequilibrium transport in resonant tunneling and related semiconductor devices, and they are increasingly used in dense-plasma and warm-dense regimes where quantum diffraction alters collective electrostatic dynamics. In such settings, the questions of interest are often transient and nonlinear, including switching, bistability, intrinsic oscillations, Landau-type damping and wave-particle interaction. Consequently, long-time robustness is at least as important as local truncation accuracy \cite{hu2022kinetic, jiang2023numerical}.

Deterministic Wigner solvers have advanced substantially over the last three decades. Ringhofer's spectral and spectral-collocation methods, followed by Arnold-Ringhofer operator splitting, establish the analytical and numerical foundation for grid-based Wigner and Wigner-Poisson solvers \cite{ringhofer1990spectral, ringhofer1992spectral, arnold1995operator}. Subsequent work introduced conservative adaptive spectral elements, WENO schemes with adaptive momentum resolution, semi-spectral formulations, mixed characteristic-spectral methods, and higher-order splitting or SBP-SAT/pseudospectral discretization for transient and higher-dimensional problems \cite{shao2011adaptive, dorda2015weno, furtmaier2016semi, xiong2016advective, chen2022higher, sun2024hybrid}. These developments deliver impressive accuracy, but they remain fundamentally full-rank in phase-space; once resolution, dimension, or integration time grows, the memory footprint and data movement become the dominant obstacles \cite{jiang2023numerical, xiong2016advective, chen2022higher, sun2024hybrid}.

Dynamical low-rank approximation offers an attractive way to reduce this cost. After the tangent-space framework by Koch and Lubich and the projector-splitting integrator of Lubich and Oseledets \cite{koch2007dynamical, lubich2014projector}, low-rank kinetic solvers matured rapidly, including Vlasov-specific projector-splitting algorithms and a sequence of conservative variants that target mass, momentum, and energy defects induced by compression \cite{einkemmer2018low, einkemmer2021mass, einkemmer2023robust,coughlin2024robust}. In parallel, tensor-based low-rank constructions by Guo and Qiu demonstrated that conservation can be embedded directly in the low-rank representation, culminating in conservative and locally macroscopic conservative formulations \cite{GuoVlasovFlowMap2022, guo2024conservative, guo2024local}.

For Wigner-Poisson, the compatibility of sampling-based adaptive compression with conservative correction remains much less developed. A quantum low-rank solver must confront a nonlocal oscillatory operator, self-consistent field coupling, and boundary-sensitive transient dynamics, while also respecting the fact that Wigner negativity and low-order moment information are both numerically consequential \cite{wigner1932quantum, weinbub2018recent, frensley1990boundary, jiang2023numerical, chen2022higher, sun2024hybrid}. More broadly, moment-closure, hyperbolic-reduction, Wigner-Poisson-BGK, and quantum-statistical equilibrium studies all support the same conclusion: reduced descriptions remain reliable only when they are tied to physically meaningful macroscopic observables and, when appropriate, to Fermi-Dirac-type equilibria \cite{cai2012quantum, li2014numerical, crouseilles2014asymptotic, bonilla2005wigner, barletti2012derivation}. What is presently missing is a method that treats low-rank compression and conservation for Wigner-Poisson as one coupled design problem rather than two separate corrections.

In this paper, we propose a conservative adaptive rank method for the Wigner-Poisson system, with an application to the 1D1V model. The method combines a conservative-density-momentum macroscopic solve with a global quadratic moment correction. The macroscopic solve provides local density and momentum updates before filtering and recompression, while the final conservation statement for the corrected low-rank state is global.
The method combines a locally conservative density-momentum macroscopic solve with a global quadratic moment correction for total energy. A related formulation enforcing global conservation of mass, momentum, and energy is presented in \cite{christlieb2026structurepreservingadaptiverankapproachhighdimensional}. We compare the two formulations in section \ref{sec:numerical results} and find nearly identical phase-space and diagnostic results for the periodic benchmarks considered here.

The rest of the paper is organized as follows. Section \ref{sec:model} introduces the nondimensional Wigner-Poisson system, its conservation laws, and the Fermi-Dirac-type equilibrium reconstruction. Section \ref{sec:full-rank solve} develops the full-rank conservative correction, including the macroscopic moment solve and the filtered equilibrium-based correction. Section \ref{sec:low-rank solve} incorporates this correction into the adaptive rank framework. Section \ref{sec:numerical results} presents numerical tests comparing conservative formulations and demonstrating accuracy, conservation, rank adaptivity, and efficiency. Section \ref{sec:conclusion} concludes and discusses future extensions.

\section{Model setting}\label{sec:model}

This section fixes the notation used throughout the paper. We first introduce the nondimensional 1D1V Wigner-Poisson system and the quantum parameter $H$. We then introduce the moment identities that motivate the conservative macroscopic correction. Finally, we introduce the Fermi-Dirac-type reconstruction used to transfer corrected density and momentum back to the solution.
\subsection{Wigner–Poisson system}\label{subsec:Wigner poisson system}

The Wigner–Poisson system in 1D1V, as presented in~\cite[Chapter~4]{haas2011quantum}, takes the form:
\begin{subequations}\label{eqn:WPmodel}
\begin{align}
\frac{\partial \tilde{f}}{\partial \tilde{t}} + \tilde{v} \frac{\partial \tilde{f}}{\partial s} 
&= -\frac{iem_e}{2\pi \hbar^2} \iint d\tilde{v}'\, ds'\, \exp\left( i m_e \frac{(\tilde{v}' - \tilde{v}) s'}{\hbar} \right) \left[ \phi\left(s + \frac{s'}{2}\right) - \phi\left(s - \frac{s'}{2}\right) \right] \tilde{f}(s, \tilde{v}', \tilde{t}), \\
\frac{\partial^2 \phi}{\partial s^2} &= -\frac{e}{\epsilon_0} \left( \int d\tilde{v}\, \tilde{f} - n_0 \right),
\end{align}
\end{subequations}
where \(\tilde{f}(s, \tilde{v}, \tilde{t})\) is the Wigner distribution function in phase space, and \(\phi(s)\) denotes the electrostatic potential. The physical constants are: \(m_e\) the particle mass, \(e\) the elementary charge, \(\epsilon_0\) the vacuum permittivity, \(n_0\) the background number density, \(\hbar\) the reduced Planck constant, and \(\tilde{t}\) the physical time. 

We now nondimensionalize \eqref{eqn:WPmodel}; see details in \cite{christlieb2025sampling}. Let \(\tau\), \(l\), and \(\bar{\phi}\) denote the characteristic time, length, and potential scales. We introduce the nondimensional variables
\[
\tilde{t}=\tau t,\qquad s=lx,\qquad \phi=\bar{\phi}\Phi,\qquad
\tilde{v}=\frac{l}{\tau}v,\qquad
f=\frac{l}{n_0\tau}\tilde{f}.
\]
We choose the potential scale, time scale, and length scale according to the characteristic plasma quantities
\[
\bar{\phi}=\frac{e n_0 l^2}{\epsilon_0},\qquad
\tau=\omega_{pe}^{-1} = \left(\sqrt{\frac{e^2 n_0}{m_e\epsilon_0}}\right)^{-1},\qquad
l=\lambda_D =\sqrt{\frac{\epsilon_0 k_B \mathcal{T}_0}{n_0 e^2}}.
\]
Here \(\mathcal{T}_0\) is a reference temperature, with the physical temperature written as \(\mathcal{T}=\mathcal{T}_0\hat{\mathcal{T}}\). With these choices, the velocity scale becomes the thermal velocity,
\[
\frac{l}{\tau}=\lambda_D\omega_{pe}
=\sqrt{\frac{k_B \mathcal{T}_0}{m_e}}=v_{th}.
\]
The dimensionless parameters are therefore
\begin{equation}\label{eqn:quantum parameter}
C=\frac{e\bar{\phi}}{m_e l^2\omega_{pe}^2}=1,\qquad
H=\frac{\hbar}{m_e\lambda_D^2\omega_{pe}},\qquad
D=\frac{e n_0 l^2}{\bar{\phi}\epsilon_0}=1.
\end{equation}
Thus \(H\) is the only remaining dimensionless parameter and measures the strength of quantum effects.

The nondimensional Wigner-Poisson system is then
\begin{subequations}\label{eqn:nonD-WP-system}
\begin{align}
\frac{\partial f}{\partial t}+v\frac{\partial f}{\partial x}
&= -\frac{i}{2\pi H^2}
\iint dv'\,dx'\,
\exp\left(i\frac{v'-v}{H}x'\right)
\left[
\Phi\left(x+\frac{x'}{2}\right)-\Phi\left(x-\frac{x'}{2}\right)
\right]f(x,v',t),\label{eq:wigner}\\
-\frac{\partial^2\Phi}{\partial x^2}
&= \int f\,dv-1.\label{eq:poisson}
\end{align}
\end{subequations}
The parameter \(H\), defined in~\eqref{eqn:quantum parameter}, is prescribed in the numerical experiments. As \(H\to 0\), system~\eqref{eqn:nonD-WP-system} formally approaches the classical Vlasov--Poisson limit, while larger values of \(H\) correspond to stronger quantum effects. In the numerical tests below, we use several values of \(H\) to assess the proposed conservative low-rank method across different quantum regimes.

\subsection{Moment identities for the Wigner-Poisson system} 
We next derive the moment identities used by the conservative correction. These identities are not introduced as a closed quantum-fluid model. Instead, the density and momentum equations provide the macroscopic variables used in the correction step, while the second-moment identity identifies the kinetic part of the total energy. In the numerical method, the implicit macroscopic solver uses the density-momentum system, and the final total energy constraint is imposed afterward at the kinetic level through a global quadratic moment correction. This distinction is important: the macroscopic solver gives local density and momentum updates before filtering and recompression, whereas the final conservation statement for the corrected low-rank state is global.

Let the Wigner potential term be $W(\Phi, f)$, which is
\[
W(\Phi, f) := -\frac{i }{2 \pi H^2} \int \int dv' dx' \exp( i \frac{v' - v}{H} x' ) [ \Phi(x + \frac{x'}{2}) - \Phi(x - \frac{x'}{2})] f(x,v', t).
\]
Integrating this term with respect to velocity gives:
\begin{subequations}
\begin{align}
\label{eqn:WignerIdentity1}
    \int W dv & = 0;\\
    \label{eqn:WignerIdentity2}
     \int v ~ W dv & = \frac{i }{2 \pi H^2} \int \int \int dv' dx' dv ~ iH ~\frac{\partial}{\partial x'} \left(\exp( i \frac{v' - v}{H} x' ) \right)[ \Phi(x + \frac{x'}{2}) - \Phi(x - \frac{x'}{2})] f(x,v', t) \nonumber\\
     & = -\rho \Phi_x;\\
    \label{eqn:WignerIdentity3}
     \int v^2 ~ W dv & = \frac{i }{2 \pi H^2} \int \int \int dv' dx' dv ~ -H^2 \frac{\partial}{\partial x'} \left(\exp( i \frac{v' - v}{H} x' ) \right)[ \Phi(x + \frac{x'}{2}) - \Phi(x - \frac{x'}{2})] f(x,v', t)  \nonumber\\
     & = -2\rho u \Phi_x;
\end{align}
\end{subequations}
where we use the following identity in the second and the third equations:

\[
v ~ e^{-i v x} = i \frac{\partial}{\partial x} e^{-i v x}.
\]
We can now define the macroscopic moments as:
\begin{subequations}
    \begin{align}
\rho(x, t) &= \int f(x, v, t) \, dv &&\text{(density)} \\
u(x, t) &= \frac{1}{\rho} \int v \, f \, dv &&\text{(bulk velocity)} \\
\mathcal{T}(x, t) &= \frac{1}{\rho} \int v^2 \, f \, dv \; - \;u^2 &&\text{(temperature)} \label{eq:temp} \\
P(x, t) &= \int (v - u)^2 f \, dv &&\text{(pressure)}. \\
Q(x, t) &= \int (v - u)^3 f \, dv &&\text{(heat flux)}.
\end{align}
\end{subequations}
\begin{theorem}\label{thm:fluid model}
    The following moment identities hold for the Wigner-Poisson system:
    \begin{subequations}
        \begin{align}
            \frac{\partial \rho}{\partial t} + \frac{\partial (\rho \, u)}{\partial x} & = 0 \\
            \frac{\partial (\rho u) }{\partial t} + \frac{\partial}{\partial x} \left(P(x,t) + \rho \, u^2\right) & = -\rho \Phi_x. \\
            \frac{\partial}{\partial t} \left(P(x,t) + \rho \, u^2 \right) + \frac{\partial}{\partial x} \left( \rho \, u^3 + Q(x,t) + 3u\, P(x,t)\right) & = -2\rho \, u \Phi_x.
        \end{align}
    \end{subequations}
Furthermore, the conservation laws of mass, momentum, and energy 
can be derived through these  equations. 
\end{theorem}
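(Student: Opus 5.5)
We take the zeroth, first and second velocity moments of the Wigner equation \eqref{eq:wigner}, substitute the already established Wigner-term identities \eqref{eqn:WignerIdentity1}--\eqref{eqn:WignerIdentity3}, and then rewrite the transport fluxes in terms of $\rho,u,P,Q$. We assume $f$ decays fast enough in $v$ to swap $\partial_t,\partial_x$ with $\int dv$, and either periodic or decaying conditions in $x$ for the global statements.

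\emph{Step 1 (mass and momentum).} Multiplying by $1$ and integrating in $v$ gives $\partial_t\rho+\partial_x\int vf\,dv=\int W\,dv=0$; since $\int vf\,dv=\rho u$, this is the continuity equation. Multiplying by $v$ gives $\partial_t(\rho u)+\partial_x\int v^2 f\,dv=-\rho\Phi_x$. The closure identity here is
\[
\int v^2 f\,dv=\int\bigl((v-u)+u\bigr)^2 f\,dv=P+2u\cdot 0+\rho u^2 ,
\]
where the middle term vanishes because $\int (v-u)f\,dv=\rho u-\rho u=0$. This yields the momentum equation.

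\emph{Step 2 (second moment).} Multiplying by $v^2$ gives $\partial_t\int v^2 f\,dv+\partial_x\int v^3 f\,dv=-2\rho u\Phi_x$. The third moment expands as
\[
\int v^3 f\,dv=\int\bigl((v-u)+u\bigr)^3f\,dv=Q+3uP+0+\rho u^3 ,
\]
again because the first central moment vanishes. This gives the stated energy-moment equation.

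\emph{Step 3 (global conservation).} Integrating the first two equations over the spatial domain, the flux terms drop out, so $\frac{d}{dt}\int\rho\,dx=0$. For momentum, $\frac{d}{dt}\int\rho u\,dx=-\int\rho\Phi_x\,dx$. Using \eqref{eq:poisson}, $\rho=1-\Phi_{xx}$, so
\[
-\int\rho\Phi_x\,dx=-\int\Phi_x\,dx+\int\Phi_{xx}\Phi_x\,dx=-\bigl[\Phi\bigr]+\tfrac12\bigl[\Phi_x^2\bigr]=0 .
\]
For energy, half the third equation gives $\frac{d}{dt}\tfrac12\int(P+\rho u^2)\,dx=-\int\rho u\Phi_x\,dx$. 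The continuity equation and integration by parts turn the right-hand side into $\int\partial_x(\rho u)\Phi\,dx=-\int\rho_t\Phi\,dx$. Since $\rho_t=-\Phi_{xxt}$, this equals
\[
\int\Phi_{xxt}\Phi\,dx=-\int\Phi_{xt}\Phi_x\,dx=-\frac{d}{dt}\tfrac12\int\Phi_x^2\,dx .
\]
Hence the total energy $\tfrac12\int(P+\rho u^2)\,dx+\tfrac12\int\Phi_x^2\,dx$ is conserved.

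\emph{Main obstacle.} The real content is the Wigner-term identities, especially \eqref{eqn:WignerIdentity2}--\eqref{eqn:WignerIdentity3}, which the paper states before the theorem and which we take as given. If they had to be justified, the plan would be as follows. Write $v^k e^{-ivx'/H}$ as $(iH\partial_{x'})^k e^{-ivx'/H}$ and integrate in $v$ to produce $\partial_{x'}^k\delta(x')$. Integrate by parts in $x'$ onto $\Phi(x+x'/2)-\Phi(x-x'/2)$ and the factor $e^{iv'x'/H}$. Evaluate at $x'=0$: only odd derivatives of the difference survive, so the $v'^2$-moment contributes nothing beyond the $2v'\Phi_x$ term. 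This delta-function bookkeeping is the delicate point, and everything else is routine algebra.
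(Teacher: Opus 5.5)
Your proposal is correct and is essentially the paper's own proof: you use the same central-moment expansions of $\int v^2 f\,dv$ and $\int v^3 f\,dv$, the same Wigner-term identities, and the same Poisson/continuity manipulations for mass, momentum, and energy (you only run the energy chain from $-\int\rho u\Phi_x\,dx$ toward the field energy instead of the reverse). One small correction: drop the ``decaying conditions in $x$'' alternative, because with the neutralizing background a decaying $f$ gives $\Phi_{xx}=1-\rho\to 1$, so the boundary terms in your momentum and energy steps need not vanish and $\int\Phi_x^2\,dx$ diverges; the periodic setting, which the paper implicitly assumes, is the one where the argument closes.
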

Proof of this theorem is provided in the Appendix \ref{apdix:thm 2.1}.

\subsection{Fermi-Dirac-type reconstruction}\label{subsubsec:Fermi}
A key element of the conservative correction is the use of an equilibrium-inspired reconstruction of the system in a way that lets us make small corrections to the kinetic solution. For this reason, we use the so-called \textit{Fermi-Dirac distribution} as opposed to the Maxwell-Boltzmann distribution, as our model is quantum in nature. We now nondimensionalize the \textit{Fermi-Dirac distribution} to make it match the original model and analyze some properties that matter when we design the conservation scheme.

The \textit{Fermi-Dirac distribution} describes the occupation of quantum states by fermions (particles obeying the Pauli exclusion principle, such as electrons). 
 
            There are a variety of forms of the Fermi-Dirac distribution, but when applying it, we need to make sure they correspond to the form of  model being solved, especially for the derivation of unitless and dimensionless forms. The distribution function in \textit{global equilibrium} in this case is given by:
\[
f_a(r, p) = \frac{1}{h_a} \cdot \frac{1}{\exp\left[\beta \left(\lambda_a - \frac{1}{2m_a}  (p - m_a u)^2 \right)\right] - d_a}
\]
where: $f_a(r, p)$ is the phase-space distribution function for species $a$, $h_a$ is the effective phase-space volume per quantum state:  
  \[
  h_a = \frac{(2\pi \hbar)}{g_a}
  \]  
where $g_a$ is the spin multiplicity factor (e.g., $g_e = 2$ for electrons). In addition, $\lambda_a$ is the chemical potential of species $a$, and $u$ determines the average energy at which states are filled. At zero temperature, it corresponds to the Fermi energy $E_F$. We use $m_a$ to represent mass of the particles of species $a$ and for electrons, $m_a = m_e$ is the \textbf{electron mass}. And $u$ is macroscopic flow velocity of the system. If the system is at rest in equilibrium, $u = 0$. $d_a$ is a quantum statistical factor, with $d_a = -1$ for fermions (Fermi-Dirac distribution), $d_a = +1$ for {bosons} (Bose-Einstein distribution) and $d_a = 0$ reduces to the {classical Maxwell-Boltzmann distribution}. And $\beta$ (or $1/k_B T$) is inverse temperature that scales the energy. For electrons, the chemical potential $\lambda_e$ is related to the electron density $n_e$ through the \textit{Fermi-Dirac integral}. 

In the \textbf{nondegenerate (high-temperature) limit}, quantum effects become negligible, implying that the thermal energy $k_B T$ significantly exceeds quantum energy-level spacings \cite{landau1980,pathria2021}. In this limit, we are saying we want to include Heisenberg uncertainty, but we expect the Pauli exclusion principle not to play a significant role, and hence we can assume the distribution is mostly non-degenerate. Under this condition, the chemical potential simplifies to:
\begin{equation}
\lambda_e \approx k_B \mathcal{T} \ln \left( \frac{n_e \lambda_{\text{th}}}{2} \right)
\end{equation}
where $\lambda_{\text{th}} = \frac{\hbar}{\sqrt{2\pi m_e k_B \mathcal{T}}}$ is the \textbf{thermal de Broglie wavelength}.
Therefore, in our case, we can define the Fermi distribution as:
\begin{align*}
\tilde{\tilde{f}} &= \frac{l}{n_0 \tau} \tilde{f} = \frac{l m_e}{\pi \hbar n_0 \tau} \cdot  \frac{1}{\exp\left( -\ln\left(\frac{n_e \lambda_{th}}{2} \right) + \frac{m_e}{2k_B \mathcal{T}} (\tilde{v} - \tilde{u})^2 \right) + 1}
\end{align*}
From the nondimensional setting of Wigner-Poisson systems, we have:
\begin{align*}
\tilde{\tilde{f}} &= \frac{\lambda_D \omega_{pe} m_e}{\pi \hbar n_0} \cdot \frac{1}{\exp\left( \frac{-1}{k_B \mathcal{T}} \lambda_e + \frac{m_e}{2k_B \mathcal{T}} (\tilde{v} - \tilde{u})^2 \right) + 1} = \frac{\lambda_D \omega_{pe} m_e}{\pi \hbar n_0}  \cdot \frac{1}{\left(\frac{n_e \lambda_{th}}{2} \right)^{-1} \exp\left(\frac{1}{2\hat{\mathcal{T}}} (v - u)^2 \right)+ 1}
\end{align*}
where $\tilde{\rho} = \int \tilde{\tilde{f}} ~ d\tilde{v} =\lambda_D \omega_{pe} \int \tilde{\tilde{f}}~ dv$ and 
$\tilde{u} = \frac{1}{\tilde{\rho}}\int \tilde{v} \tilde{\tilde{f}}~d\tilde{v} = \frac{1}{\lambda_D \omega_{pe} \rho}\int (\lambda_D \omega_{pe})^2 v \tilde{\tilde{f}}~dv  = \lambda_D \omega_{pe} \frac{1}{\rho}\int v \tilde{\tilde{f}}~dv = \lambda_D \omega_{pe} u$. And note that:
\[
\frac{\lambda_D \omega_{pe} m_e}{\pi \hbar n_0} = \frac{\lambda_D \omega_{pe} m_e}{(\pi H (n_0 m_e \lambda_D^2 \omega_{pe}))} = \frac{1}{\pi H  \lambda_D n_0}, \]

\[\left(\frac{n_e \lambda_{th}}{2} \right)^{-1} = \frac{2}{n_e \lambda_{th}} = \frac{2}{n_e (\frac{\hbar}{\sqrt{2\pi m_e k_B \mathcal{T}}})} = \frac{2 \sqrt{2\pi\hat{\mathcal{T}}}}{n_e H \lambda_{D}} = \frac{2 \sqrt{2\pi\hat{\mathcal{T}}}}{n_0 \rho H \lambda_{D}}.
\]
Thus, the Fermi-Dirac distribution function in dimensionless form becomes:
\begin{equation}\label{eq:fermi-dirac}
    \tilde{\tilde{f}} =  \frac{1}{\pi H  \lambda_D n_0} \cdot \frac{1}{\frac{2 \sqrt{2\pi \hat{\mathcal{T}}}}{n_0 \rho H \lambda_{D}} \exp\left(\frac{1}{2\hat{\mathcal{T}}} (v - u)^2 \right)+ 1}  ~.~
\end{equation}
Due to the nature of what the Fermi-Dirac distribution describes, if we choose an initial density of $\rho=\rho_0 = 1$ in \eqref{eq:fermi-dirac},
%
it is important to note that the non-dimensional form integrates to $\int \int \tilde{\tilde{f}} dx\,dv = C$ instead of integrating to $1$. To relate $\tilde{\tilde{f}}$ to our non-dimensional $f$ in the first part of this section, we introduce the following scaling Fermi-Dirac $f = \frac{1}{C} \tilde{\tilde{f}}$ to have the property that $\int f dv = \rho$. This modification is to make the non-dimensional form of scaled Fermi-Dirac distribution consistent with  our choices for non-dimensionalization of Wigner in part one of this section.  Unlike the Maxwell-Boltzmann distribution, the constant $C$ cannot be explicitly computed and an approximated number will be given later in Section 3. This is a form explicitly showing the classical Maxwell-Boltzmann statistics recovered at high temperatures. 

\begin{equation}
    f =  \frac{1}{C} \frac{1}{\pi H  \lambda_D n_0} \cdot \frac{\rho}{\frac{2 \pi \sqrt{2\pi \hat{\mathcal{T}}}}{\pi H \lambda_{D} n_0} \exp\left(\frac{1 }{2\hat{\mathcal{T}}} (v - u)^2 \right)+ \rho } ~,~
\end{equation}
Define $\alpha=\frac{1}{\pi H  \lambda_D n_0}$,
\begin{equation}
    f =  \frac{\alpha}{C}  \cdot \frac{\rho}{\alpha 2 \pi \sqrt{2\pi \hat{\mathcal{T}}}\exp\left(\frac{1 }{2\hat{\mathcal{T}}} (v - u)^2 \right)+ \rho } ~.~
\end{equation}
It's straightforward to show that as $\alpha \rightarrow \infty$, $f\rightarrow M$, where $M$ is the Maxwell-Boltzmann distribution.  We note that in practice,  while $H$ is a scale invariant parameter,  $\alpha$ depends on the plasma under consideration.  However, for demonstration in the numerical section, we replace $\lambda_D$ and $n_0$ in $\alpha$ with a  non-dimensional $\tilde{\lambda}_D = \sqrt{ \frac{\hat{\mathcal{T}}_0}{\hat{n}_0} }$ and $\hat{n}_0$, where 
$\hat{\mathcal{T}}_0$ is the initial non-dimensional bulk \lq temperature' of the initial condition over the domain and $\hat{n}_0$ is the initial non-dimensional bulk \lq density' over the domain, which here will be taken to be $\hat{n}_0=1$. In this form, we note that $C$ will need to be computed numerically.   

\begin{lemma}[Moment consistency of the Fermi--Dirac]\label{lem:Fermi consistency}
Let \(\rho>0\), \(\hat {\mathcal{T}}>0\), and define
\begin{equation}
    f^{FD}(v)=\frac{\alpha \rho}{C}\frac{1}{A\exp\left(\frac{(v-u)^2}{2\hat {\mathcal{T}}}\right)+\rho},
    \qquad
    A=\alpha 2\pi\sqrt{2\pi\hat{\mathcal{T}}}.
\end{equation}
Assume the velocity domain is symmetric about \(u\), or equivalently take \(v\in(-\infty,\infty)\). If
\begin{equation}
    C=\alpha\sqrt{2\hat {\mathcal{T}}}\,I_0,
    \qquad
    I_0=\int \frac{ds}{A\exp(s^2)+\rho},
\end{equation}
then
\begin{equation}
    \int f^{FD}\,dv=\rho,
    \qquad
    \int v f^{FD}\,dv=\rho u.
\end{equation}
Moreover,
\begin{equation}
    \int \frac{1}{2}v^2 f^{FD}\,dv=\rho\left(\frac{1}{2}u^2+e^{FD}\right),
    \qquad
    e^{FD}=\hat {\mathcal{T}}\frac{I_1}{I_0},
    \qquad
    I_1=\int \frac{s^2\,ds}{A\exp(s^2)+\rho}.
\end{equation}
Consequently, the relaxation operator \(Q(f)=\frac{1}{\tau}(f^{FD}-f)\) preserves mass and momentum. It also preserves the second moment if \(\hat {\mathcal{T}}\) is chosen so that \(e^{FD}=e\), where \(e\) is the internal energy of \(f\).
\end{lemma}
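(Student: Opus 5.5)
The plan is to substitute $v = u + \sqrt{2\hat{\mathcal{T}}}\,s$, so that $dv=\sqrt{2\hat{\mathcal{T}}}\,ds$ and $(v-u)^2/(2\hat{\mathcal{T}})=s^2$. Because the velocity domain is symmetric about $u$ (or is all of $\mathbb{R}$), the $s$-domain is symmetric about $0$. The integrand then depends on $s$ only through the even function
\[
g(s)=\frac{1}{A e^{s^2}+\rho},
\]
which is positive and integrable whenever $A>0$ and $\rho>0$. Its integrability follows from $g(s)\le A^{-1}e^{-s^2}$, and the same bound shows that all polynomial moments of $g$ are finite. This justifies every integral below, including $I_0$ and $I_1$.

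Each moment identity then reduces to a one-line computation.

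\textbf{Mass.} We have
\[
\int f^{FD}\,dv=\frac{\alpha\rho}{C}\sqrt{2\hat{\mathcal{T}}}\,I_0 .
\]
With the stated choice $C=\alpha\sqrt{2\hat{\mathcal{T}}}\,I_0$ this equals $\rho$. Here $I_0>0$, so $C$ is well defined.

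\textbf{Momentum.} Write $v=u+\sqrt{2\hat{\mathcal{T}}}\,s$. Then
\[
\int v f^{FD}\,dv = u\int f^{FD}\,dv+\frac{\alpha\rho}{C}\,2\hat{\mathcal{T}}\int s\,g(s)\,ds .
\]
The second integral vanishes because $s\,g(s)$ is odd and the domain is symmetric. Hence the momentum is $\rho u$.

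\textbf{Second moment.} Expand
\[
v^2=u^2+2u\sqrt{2\hat{\mathcal{T}}}\,s+2\hat{\mathcal{T}}\,s^2 .
\]
The $u^2$ term contributes $\tfrac12\rho u^2$. The cross term vanishes by oddness. The last term contributes
\[
\tfrac12\cdot\frac{\alpha\rho}{C}(2\hat{\mathcal{T}})^{3/2}I_1=\rho\,\hat{\mathcal{T}}\frac{I_1}{I_0},
\]
after inserting $C$. This gives $e^{FD}=\hat{\mathcal{T}}I_1/I_0$.

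For the relaxation operator, the mass and momentum identities show directly that
\[
\int Q(f)\,dv=\int v\,Q(f)\,dv=0,
\]
provided $f^{FD}$ is built from the density $\rho$ and velocity $u$ of $f$ itself. For the second moment, write
\[
\int \tfrac12 v^2 f\,dv=\rho\left(\tfrac12u^2+e\right),
\]
with $e=\tfrac12\mathcal{T}$ in the notation of \eqref{eq:temp}. The difference of second moments is then $\rho\,(e^{FD}-e)$, which vanishes exactly when $e^{FD}=e$.

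The main subtlety is that $A=\alpha 2\pi\sqrt{2\pi\hat{\mathcal{T}}}$, so $I_0$, $I_1$ and $C$ all depend on $\hat{\mathcal{T}}$ (and on $\rho$). The computation above is done at fixed $(\rho,\hat{\mathcal{T}})$, so it is unaffected. The final sentence of the lemma, however, is conditional: it assumes a $\hat{\mathcal{T}}$ with $e^{FD}(\hat{\mathcal{T}})=e$ exists. I would state it that way rather than prove solvability. If existence were required, I would try a continuity argument on $\hat{\mathcal{T}}\mapsto \hat{\mathcal{T}}I_1/I_0$, checking its limits as $\hat{\mathcal{T}}\to0$ and $\hat{\mathcal{T}}\to\infty$. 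I expect that to be the only genuinely nontrivial point.
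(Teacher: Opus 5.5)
Your proposal is correct and follows essentially the same route as the paper. The paper also shifts by $u$ and rescales by $\sqrt{2\hat{\mathcal{T}}}$, uses evenness of the integrand to remove the odd terms, and inserts the definition of $C$ to obtain $\rho$, $\rho u$, and $e^{FD}=\hat{\mathcal{T}}I_1/I_0$; it reads the relaxation statement under the same proviso that $f^{FD}$ has the density and momentum of $f$. Your integrability bound $g(s)\le A^{-1}e^{-s^2}$ and your remark that the final claim assumes, rather than proves, that $e^{FD}=e$ is solvable are both sound additions that the paper simply omits.
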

Proof of this lemma is provided in the Appendix \ref{apdix:lem fermi}.

\section{Full-rank formulation of the conservative correction}\label{sec:full-rank solve}

The low-rank method introduced later relies on a conservative macroscopic correction. We first describe this correction in the full-rank setting, where all phase-space degrees of freedom are available. To our knowledge, this gives the first conservative formulation of the original full-rank Strang-splitting Wigner-Poisson method of Suh~\cite{suh1991numerical}. The formulation couples the kinetic update to a macroscopic density--momentum solve, producing a kinetic solution that is locally conservative in mass and momentum, while total energy is enforced globally at the kinetic level. This improvement is important because conservation of these invariants directly affects the physical fidelity of the computed Wigner-Poisson dynamics. The full-rank construction also provides the foundation for the conservative correction incorporated into the adaptive rank framework in Section~\ref{sec:low-rank solve}.

In the macroscopic solve, we use
\begin{equation}\label{eqn:temperature closure}
    P_i = \rho_i \mathcal{T}_i,
\end{equation}
where the temperature $\mathcal{T}_i$ is computed from the provisional kinetic distribution $f^{*,n+1}$ using~\eqref{eq:temp} and is then frozen during the implicit density--momentum solve. Thus, the fluid solve supplies corrected density and momentum targets without introducing a closure for the third moment.
The choice of $\mathcal{T}_i$ is tied to the global energy balance: the temperature is computed from the kinetic state, while the final conservation of total energy accounts for both kinetic and electrostatic field energy. This treatment is used throughout both the full-rank and low-rank conservative corrections.

\subsection{Conservative Formulation}

We define the electric field by \(E=-\Phi_x\). Since the nondimensional Poisson equation is \(-\Phi_{xx}=\rho-1\), we have  \(E_x=\rho-1\).
Therefore,
\begin{equation}
\rho E=(1+E_x)E
      =E+\frac12\partial_x(E^2)
      =\partial_x\left(\frac12E^2-\Phi\right).
\end{equation}
Thus the electrostatic force contribution in the momentum equation can
be written in conservative form with
\begin{equation}
F_{\rm EM}=\frac12E^2-\Phi.
\end{equation}
This reformulation is useful numerically because the force contribution is written only in terms of the electrostatic potential \(\Phi\), or equivalently the electric field \(E\), so that once the Poisson solve is completed it can be treated as a known source in the macroscopic update.
The continuous fluid system can be expressed in conservative vector form:
\begin{equation}
    \frac{\partial \mathbf{U}}{\partial t} + \frac{\partial \mathbf{F}}{\partial x} = \mathbf{S} \label{Newton_method}
\end{equation}
where the conservative state \(\mathbf{U}\), flux \(\mathbf{F}\), and source \(\mathbf{S}\) are given by:

\begin{equation}
    \mathbf{U}=\begin{pmatrix} \rho \\ \rho u \\ \rho {\mathcal{T}}+\rho u^2 \end{pmatrix}, \quad
    \mathbf{F}=\begin{pmatrix} \rho u \\ \rho {\mathcal{T}}+\rho u^2 \\ \rho u^3+Q+3\rho u{\mathcal{T}} \end{pmatrix}, \quad
    \mathbf{S}=\begin{pmatrix} 0 \\ \frac{\partial F_{\text{EM}}}{\partial x} \\ -2\rho u\Phi_x \end{pmatrix}.
\end{equation}
The vector form above records the first three moment identities associated with the Wigner-Poisson equation. We do not solve this full three-equation system as a closed quantum-fluid model, because the third equation contains the unclosed flux $Q$. Instead, the implicit macroscopic solve uses only the density and momentum equations. These equations provide corrected target moments $\rho^{n+1}$ and $(\rho u)^{n+1}$. The total-energy constraint is imposed afterward at the kinetic level through the discrete moment correction described in Section 3.4.

Therefore, the fluid system used in the implicit solve is the reduced conservative system
\begin{equation}
    \mathbf{U} = \begin{pmatrix} \rho \\ \rho u \end{pmatrix}, \quad \mathbf{F}(U) = \begin{pmatrix} \rho u \\ \rho {\mathcal{T}} + \rho u^2 \end{pmatrix}, \quad \mathbf{S} = \begin{pmatrix} 0 \\ \frac{\partial F_{\text{EM}}}{\partial x} \end{pmatrix} \label{fluid_model}
\end{equation}

\subsection{Finite Volume Discretization}
Integrating over a computational cell \(i\) of width \(\Delta x\) using a Backward Euler time discretization with step \(\Delta t\), the implicit update is:
\begin{equation}
    \mathbf{U}_i^{n+1} - \mathbf{U}_i^n + \lambda \left( \mathbf{F}_{i+1/2}^{n+1} - \mathbf{F}_{i-1/2}^{n+1} \right) - \Delta t \mathbf{S}_i^{n+1} = 0
\end{equation}
where \(\lambda = \Delta t / \Delta x\). The interface fluxes are evaluated using the Rusanov (Local Lax-Friedrichs) scheme:
\begin{equation}
    \mathbf{F}_{i+1/2}^{n+1} = \frac{1}{2} \left( \mathbf{F}_i^{n+1} + \mathbf{F}_{i+1}^{n+1} \right) - \frac{\alpha_R}{2} \left( \mathbf{U}_{i+1}^{n+1} - \mathbf{U}_i^{n+1} \right)
\end{equation}
The numerical diffusion coefficient \(\alpha\) is determined by the maximum absolute characteristic wave speed of the local fluid. By transforming the governing equations into primitive form, the eigenvalues of the primitive Jacobian are found to be \(\lambda_{1,2} = u \pm \sqrt{{\mathcal{T}}}\), where \(\sqrt{{\mathcal{T}}}\) is the isothermal wave speed. Thus, we set \(\alpha = |u| + \sqrt{{\mathcal{T}}}\).

Substituting the Rusanov fluxes into the finite volume update and simplifying yields the exact non-linear residual \(\mathbf{R}_i\) for cell \(i\) at the advanced time level (dropping the \(n+1\) superscript for clarity):
\begin{equation}
    \mathbf{R}_i = \mathbf{U}_i - \mathbf{U}_i^n + \frac{\lambda}{2}(\mathbf{F}_{i+1} - \mathbf{F}_{i-1}) - \frac{\lambda \alpha_R}{2}(\mathbf{U}_{i+1} - \mathbf{U}_i) + \frac{\lambda \alpha_L}{2}(\mathbf{U}_i - \mathbf{U}_{i-1}) - \Delta t \mathbf{S}_i = 0
\end{equation}

\subsection{Newton-Raphson Linearization}
To find the root of the non-linear residual \(\mathbf{R}(\mathbf{U}) = 0\), we apply a Newton-Raphson iteration. Crucially, to prevent division by zero in near-vacuum regions where \(\rho \to 0\), we formulate the update vector with respect to the variables \(\mathbf{W} = (\rho, u)^T\). 

The linearized system takes the form \(\mathbf{J} \Delta \mathbf{W} = -\mathbf{R}\), where \(\mathbf{J} = \frac{\partial \mathbf{R}}{\partial \mathbf{W}}\). The analytic block Jacobian requires the derivatives of the conservative state and flux vectors with respect to the primitive variables:
\begin{equation}
    \mathbf{J}_U = \frac{\partial \mathbf{U}}{\partial \mathbf{W}} = \begin{pmatrix} 1 & 0 \\ u & \rho \end{pmatrix}, \quad \mathbf{J}_F = \frac{\partial \mathbf{F}}{\partial \mathbf{W}} = \begin{pmatrix} u & \rho \\ {\mathcal{T}} + u^2 & 2\rho u \end{pmatrix}
\end{equation}
Note that \(\frac{\partial \mathbf{S}}{\partial \mathbf{W}} = 0\) since \(F_{\text{EM}}\) is treated as constant during the fluid update step.

Differentiating the residual \(\mathbf{R}_i\) with respect to the adjacent primitive states yields the \(2 \times 2\) blocks comprising the global sparse tridiagonal Jacobian matrix:

\noindent \textbf{Left Block (\(i-1\)):}
\begin{equation}
    \mathbf{J}_{\text{Left}} = -\frac{\lambda}{2} \mathbf{J}_{F, i-1} - \frac{\lambda \alpha_L}{2} \mathbf{J}_{U, i-1} = \begin{pmatrix} -\frac{\lambda}{2} (u_{i-1} + \alpha_L) & -\frac{\lambda}{2} \rho_{i-1} \\ -\frac{\lambda}{2}({\mathcal{T}}_{i-1} + u_{i-1}^2 + \alpha_L u_{i-1}) & -\frac{\lambda}{2}\rho_{i-1}(2u_{i-1} + \alpha_L) \end{pmatrix}
\end{equation}

\noindent \textbf{Right Block (\(i+1\)):}
\begin{equation}
    \mathbf{J}_{\text{Right}} = \frac{\lambda}{2} \mathbf{J}_{F, i+1} - \frac{\lambda \alpha_R}{2} \mathbf{J}_{U, i+1} = \begin{pmatrix} \frac{\lambda}{2} (u_{i+1} - \alpha_R) & \frac{\lambda}{2} \rho_{i+1} \\ \frac{\lambda}{2}({\mathcal{T}}_{i+1} + u_{i+1}^2 - \alpha_R u_{i+1}) & \frac{\lambda}{2}\rho_{i+1}(2u_{i+1} - \alpha_R) \end{pmatrix}
\end{equation}

\noindent \textbf{Center Block (\(i\)):}
\begin{equation}
    \mathbf{J}_{\text{Center}} = \left( 1 + \frac{\lambda \alpha_R}{2} + \frac{\lambda \alpha_L}{2} \right) \mathbf{J}_{U, i} = \begin{pmatrix} K & 0 \\ K u_i & K \rho_i \end{pmatrix}
\end{equation}
where the scalar multiplier is defined as \(K = 1 + \frac{\lambda}{2}(\alpha_R + \alpha_L)\). Solving this sparse block tridiagonal Newton system gives the primitive increments \(\Delta\rho\) and \(\Delta u\). The nonlinear density-momentum residual is solved to the prescribed Newton tolerance.

The conservative correction has two stages. First, a Fermi-Dirac-type correction transfers the density and momentum update from the macroscopic solve back to the kinetic grid. In the full-rank implementation this correction is smoothed in the spatial direction to reduce grid-scale oscillations introduced by the cellwise normalization. Second, after the filtered correction is added to the
kinetic state, a global quadratic moment multiplier is applied so that the final state satisfies the prescribed global mass, momentum, and total-energy constraints. The distinction between these two stages is important: the Fermi-Dirac correction is local before filtering, while the final quadratic moment correction is global.

\subsection{Filtered Fermi-Dirac correction}

After the implicit macroscopic solve, the full-rank kinetic state is corrected through a Fermi-Dirac-type reconstruction. Let \(f^{\ast,n+1}_{i,j}\) denote the provisional kinetic state obtained after the Wigner-Poisson update, and let \(\rho_i^\ast\), \(u_i^\ast\), and \({\mathcal{T}}_i^\ast\) be the corresponding density, bulk velocity, and temperature computed from this state. The conservative macroscopic solve provides updated target values \(\rho_i^{n+1}\) and \(u_i^{n+1}\). We then construct the Fermi-Dirac-type correction
\begin{equation}
\Delta F^{FD}_{i,j}
=
F^{FD}(\rho_i^{n+1},u_i^{n+1},{\mathcal{T}}_i^\ast;v_j)
-
F^{FD}(\rho_i^\ast,u_i^\ast,{\mathcal{T}}_i^\ast;v_j), \label{fermi_rec}
\end{equation}
where \(F^{FD}\) denotes the discrete Fermi-Dirac-type reconstruction defined in Section 2.3. The temperature is kept fixed at \({\mathcal{T}}_i^\ast\) during this step. Thus, the purpose of the Fermi-Dirac correction is to transfer the density and momentum update from the macroscopic solve back to the full kinetic grid, not to solve an additional temperature-matching problem.

In the full-rank implementation, this correction is represented on the entire \(N_x\times N_v\) phase-space grid. Since the Fermi-Dirac normalization is computed locally in each spatial cell, small grid-scale variations in \(\rho_i\), \(u_i\), or \({\mathcal{T}}_i\) can produce sharper oscillations in the reconstructed correction. This effect is especially pronounced through the local normalization factor \(C_i\), which depends nonlinearly on the macroscopic fields through the velocity quadrature. For this reason, the full-rank scheme applies a spatial Gaussian filter to the Fermi-Dirac correction before adding it to the kinetic solution.

The filter is applied independently to each velocity column. For fixed \(v_j\), the filtered correction is defined by
\begin{equation}
\overline{\Delta F}^{FD}_{i,j}
=
\sum_{a=-m}^{m}G_a\,\Delta F^{FD}_{i-a,j},
\end{equation}
where
\begin{equation}
G_a
=
\frac{\exp(-a^2/(2\sigma^2))}
{\sum_{\ell=-m}^{m}\exp(-\ell^2/(2\sigma^2))},
\qquad a=-m,\ldots,m.
\end{equation}
Here \(\sigma\) is measured in spatial grid cells. In the reported full-rank computations we use a Gaussian filter with \(\sigma=1\) for a weak filter, and \(\sigma=4\) for a strong filter. When the stencil width is not prescribed, the implementation takes \(m=\lceil 3\sigma\rceil\), so that the stencil includes the dominant support of the discrete Gaussian kernel. The filter is applied with periodic wrapping in the spatial direction for the periodic benchmarks considered here.

The normalization of the Gaussian weights gives
\begin{equation}
\sum_{a=-m}^{m}G_a=1.
\end{equation}
Therefore, under periodic boundary conditions and without clipping, the filter preserves the spatial sum of each velocity column:
\begin{equation}
\sum_i \overline{\Delta F}^{FD}_{i,j}
=
\sum_i \Delta F^{FD}_{i,j}.
\end{equation}
Consequently, the filter preserves the globally integrated velocity moments of the correction after summing over both \(x_i\) and \(v_j\). This is a global conservation property, not a pointwise one. The filter redistributes the Fermi-Dirac correction across neighboring spatial cells and therefore does not preserve the corrected density or momentum at each individual cell.

The filtered full-rank state is then updated by
\begin{equation}
\widetilde f^{n+1}_{i,j}
=
f^{\ast,n+1}_{i,j}
+
\overline{\Delta F}^{FD}_{i,j}.
\end{equation}
This state is subsequently passed to the global discrete moment correction described in the next subsection. In the computations reported here, no positivity floor is applied after filtering. This is consistent with the sign-indefinite nature of the Wigner distribution: the Gaussian filter is used only as a full-rank regularization of the Fermi-Dirac correction, not as a positivity-preserving limiter.

This filtering step is specific to the full-rank implementation. In the adaptive rank scheme described in Section 4, we do not apply this explicit Gaussian filter. The low-rank update and recompression already remove components that are not represented in the retained low-rank subspace, so the additional full-rank smoothing step is not used there.

\subsection{Global kinetic moment correction for the total-energy constraint}

A central contribution of this work is a global kinetic moment correction that enforces the discrete total-energy constraint through a small algebraic system. Moment correction based on Lagrange multipliers are commonly used to enforce density, and sometimes momentum, constraints. Here, however, the multiplier is used to correct the kinetic second moment so that the final distribution is compatible with the specified total energy. To the best of our knowledge, this is the first use of such global velocity-polynomial multiplier for enforcing the total energy constraint in a conservative adaptive rank Wigner-Poisson solver.

The Fermi-Dirac-type correction is constructed from the locally updated density and momentum supplied by the macroscopic solve and transfers this update back to the kinetic solution before filtering and recompression. These density and momentum updates are obtained from the locally conservative macroscopic solve, not from purely global constraint. However, after the Wigner-Poisson update, the Fermi-Dirac-type correction, filtering, and quadrature errors, the corrected kinetic state may no longer satisfy the desired discrete total energy. We therefore apply a global algebraic correction to the filtered state $\widetilde{f}^{n+1}$. This step  should be distinguished from the globally conservative formulation in \cite{christlieb2026structurepreservingadaptiverankapproachhighdimensional}: here, mass and momentum are  supplied by the local macroscopic solve, while the global multiplier is used to match the corresponding integrated mass and momentum and to enforce the total energy constraint.

The correction uses a velocity-dependent quadratic multiplier that is constant in space:
\begin{equation}
f^{n+1}_{i,j}
=
\widetilde f^{n+1}_{i,j}
\left(k_0+k_1v_j+k_2v_j^2\right).
\end{equation}
The same coefficients \(k_0,k_1,k_2\) are applied to all interior spatial cells. Thus, this is a global kinetic moment correction, instead of a cellwise or pointwise correction.

Let \(w_j\) denote the trapezoidal velocity weights, with \(w_0=w_{N_v-1}=1/2\) and \(w_j=1\) for \(1\leq j\leq N_v-2\). For \(\ell=0,\ldots,4\), define the global raw moments of the filtered state by
\begin{equation}
M_\ell
=
\Delta x\,\Delta v
\sum_{i=1}^{N_x}
\sum_{j=0}^{N_v-1}
w_j v_j^\ell \widetilde f^{n+1}_{i,j},
\end{equation}
where only physical interior cells are included.

The target mass and momentum are the spatial integrals of the locally corrected macroscopic variables:
\begin{equation}
M_0^\star
=
\Delta x\sum_{i=1}^{N_x}\rho_i^{n+1},
\qquad
M_1^\star
=
\Delta x\sum_{i=1}^{N_x}m_i^{n+1},
\qquad
m_i^{n+1}=\rho_i^{n+1}u_i^{n+1}.
\end{equation}
These two constraints ensure that the global multiplier does not change the total mass and momentum supplied by the macroscopic solve, while maintaining local structure of mass and momentum.

The third constraint comes from the discrete total energy. Let
\begin{equation}
\mathcal E_{tot}^0
=
\frac12 \Delta x\,\Delta v
\sum_{i=1}^{N_x}
\sum_{j=0}^{N_v-1}
w_j v_j^2 f^0_{i,j}
+
\frac12\Delta x
\sum_{i=1}^{N_x}
(E_i^0)^2
\end{equation}
be the initial total energy. At time \(t^{n+1}\), compute
\begin{equation}
E_i^{n+1}
=
-\frac{\Phi_{i+1}^{n+1}-\Phi_{i-1}^{n+1}}{2\Delta x},
\qquad
\mathcal E_{\rm field}^{n+1}
=
\frac12\Delta x
\sum_{i=1}^{N_x}
(E_i^{n+1})^2 .
\end{equation}
The target kinetic second moment is then
\begin{equation}
M_2^\star
=
2\left(\mathcal E_{tot}^0-\mathcal E_{\rm field}^{n+1}\right),
\end{equation}
so that enforcing \(M_2^\star\) enforces the kinetic energy required by the total-energy balance.

Substituting the corrected distribution into the three integrated moment constraints gives
\begin{equation}
\begin{pmatrix}
M_0 & M_1 & M_2\\
M_1 & M_2 & M_3\\
M_2 & M_3 & M_4
\end{pmatrix}
\begin{pmatrix}
k_0\\
k_1\\
k_2
\end{pmatrix}
=
\begin{pmatrix}
M_0^\star\\
M_1^\star\\
M_2^\star
\end{pmatrix}. \label{lagrange_system}
\end{equation}
When the determinant of the moment matrix is larger than the prescribed threshold, the system is solved explicitly using Cramer's rule. In the current implementation the threshold is \(10^{-25}\). If the determinant does not exceed this threshold, the correction is skipped by setting \(k_0=1\), \(k_1=0\), and \(k_2=0\).

This correction is therefore global only in the sense that the multiplier is determined from spatially integrated kinetic moments. It does not replace the local mass and momentum conservation supplied by the macroscopic solve, nor does it enforce density or momentum pointwise in \(x\). Its role is to make the final kinetic distribution consistent with the integrated mass and momentum targets and with the prescribed total energy.

Because Wigner distributions may be sign-indefinite, the multiplier is not a positivity-preserving limiter. The polynomial factor \(k_0+k_1v_j+k_2v_j^2\) is used only for moment matching. Large coefficients could amplify unresolved velocity oscillations if the moment matrix is ill-conditioned; the determinant threshold is therefore a conditioning safeguard, not a positivity safeguard.

\begin{algorithm}[H]
\caption{Full-rank conservative correction for Wigner-Poisson}
\begin{algorithmic}[1]
\Require Distribution \(f^n_{i,j}\), potential \(\Phi^n_i\), time step
\(\Delta t\), velocity grid \(\{v_j\}\), and filter weights \(\{G_a\}\).

\State Advance the full-rank Wigner-Poisson equation over one time step to
obtain the provisional kinetic state \(f^{\ast,n+1}_{i,j}\).

\State Compute provisional moments
\(\rho_i^\ast\), \((\rho u)_i^\ast\), and \({\mathcal{T}}_i^\ast\) from
\(f^{\ast,n+1}_{i,j}\).

\State Solve Poisson's equation using \(\rho_i^\ast\) or the corrected density
specified by the time discretization, and compute \(\Phi^{n+1}\) and
\(E^{n+1}\).

\State Solve the implicit conservative density-momentum system to obtain target moments \(\rho_i^{n+1}\) and \((\rho u)_i^{n+1}\) (\ref{fluid_model}).

\State Construct the Fermi-Dirac-type correction
\(\Delta F_{i,j}\) from
\((\rho_i^\ast,u_i^\ast,{\mathcal{T}}_i^\ast)\) to
\((\rho_i^{n+1},u_i^{n+1},{\mathcal{T}}_i^\ast)\) (\ref{fermi_rec}).

\State Apply the spatial filter in \(x\) to obtain
\(\overline{\Delta F}_{i,j}\).

\State Set
\(\widetilde f^{n+1}_{i,j}
=
f^{\ast,n+1}_{i,j}
+
\overline{\Delta F}_{i,j}\).

\State Compute the moment-correction coefficients
\(k_{0}\), \(k_{1}\), and \(k_{2}\) from (\ref{lagrange_system})

\State Set
\(f^{n+1}_{i,j}
=
\widetilde f^{n+1}_{i,j}
(k_{0}+k_{1}v_j+k_{2}v_j^2)\).

\State Advance \(t\leftarrow t+\Delta t\).
\end{algorithmic}
\end{algorithm}

\section{Adaptive rank scheme} \label{sec:low-rank solve}
While the full-rank method deals with the complete phase-space solution, the low-rank scheme uses the SVD of the solution \cite{christlieb2025sampling}. The conservative low-rank scheme evaluates the required moments directly from the low-rank factors. If
\begin{equation}
    f^n=U^n\Sigma^n (V^n)^T, \label{f_low}
\end{equation}
then
\begin{align}
    \rho_i &= \Delta v \sum_{\ell=1}^{r} U_{i\ell}\Sigma_{\ell\ell} \sum_{j=1}^{N_v} w_jV_{j\ell}, \label{rho_low} \\
    u_i &= \frac{\Delta v}{\rho_i} \sum_{\ell=1}^{r} U_{i\ell}\Sigma_{\ell\ell} \sum_{j=1}^{N_v} w_jv_j V_{j\ell}, \label{u_low} \\
    {\mathcal{T}}_i &= \frac{\Delta v}{\rho_i} \sum_{\ell=1}^{r} U_{i\ell}\Sigma_{\ell\ell} \sum_{j=1}^{N_v} w_jv_j^2 V_{j\ell} - u_i^2 , \label{T_low}
\end{align}

where \(w_j\) denotes the trapezoidal velocity weights, with \(w_0=w_{N_v-1}=1/2\) and \(w_j=1\) for \(1\leq j\leq N_v-2\).

The same Newton solver is then used. In the low-rank implementation, however, the correction is not immediately added as a full-rank update; instead, the required correction data are stored and incorporated through the sampling function used in the next semi-Lagrangian step.

One important difference between the full-rank and low-rank scheme is that the low-rank does not need a filter, as demonstrated in Fig. \ref{fig:rho_diff} where one can see that the low-rank truncation acts as a compression-induced  filter suppressing high-frequency modes.

\begin{figure}[htbp]
    \centering

    \begin{subfigure}{0.49\textwidth}
        \centering
        \includegraphics[width=\linewidth]{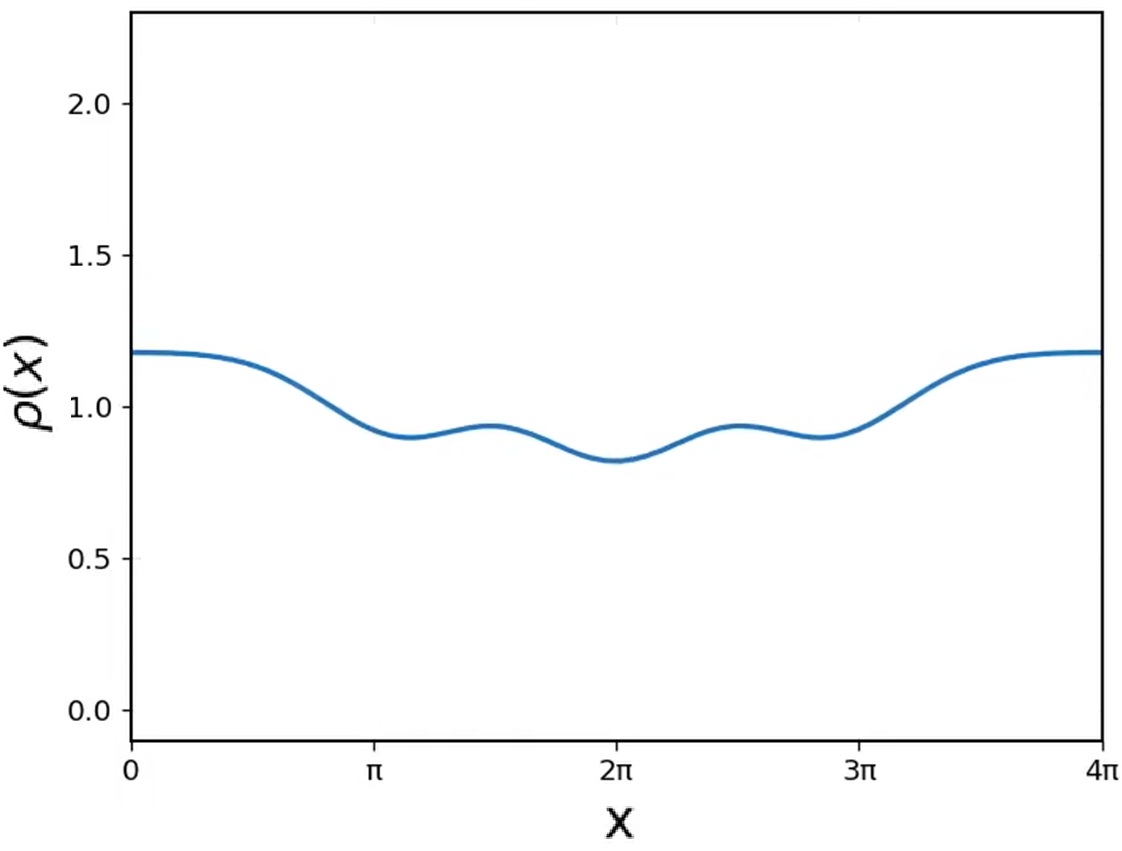}
        \caption{}
        \label{fig:rho_lowrank}
    \end{subfigure}
    \hfill
    \begin{subfigure}{0.49\textwidth}
        \centering
        \includegraphics[width=\linewidth]{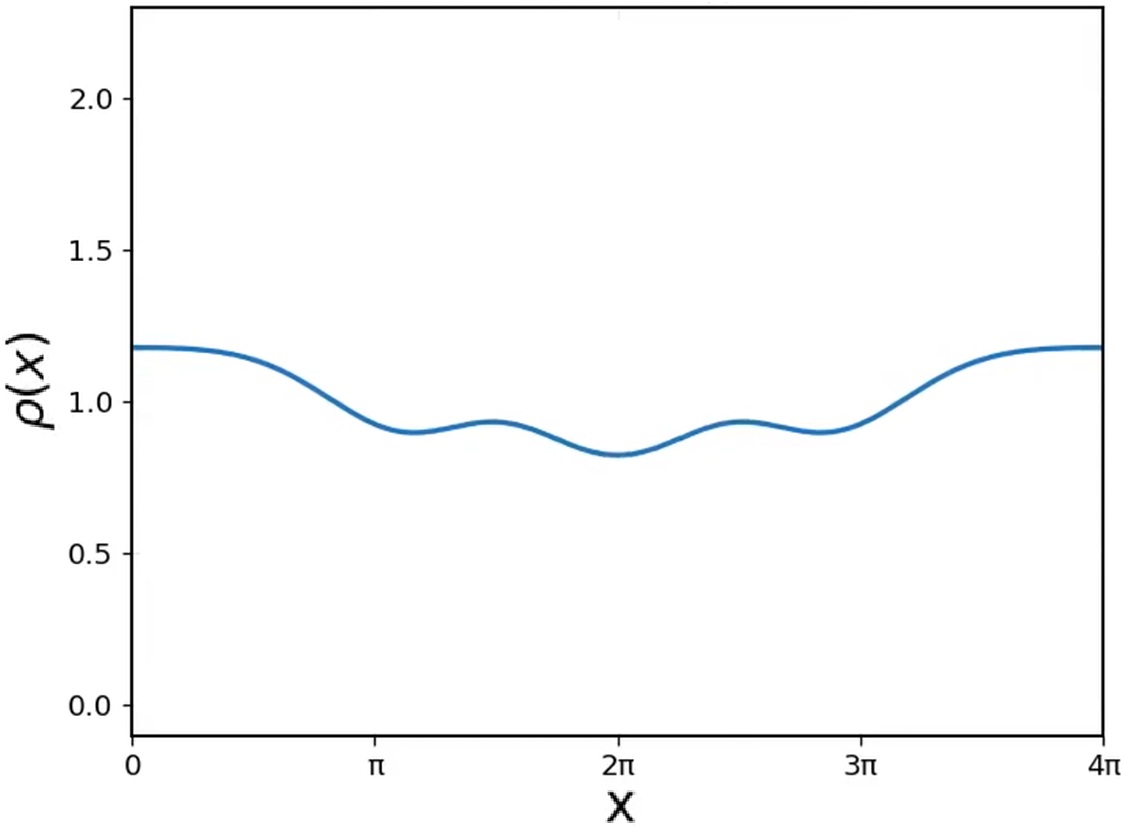}
        \caption{}
        \label{fig:rho_full_4}
    \end{subfigure}

    \vspace{1em} 

    \begin{subfigure}{0.49\textwidth}
        \centering
        \includegraphics[width=\linewidth]{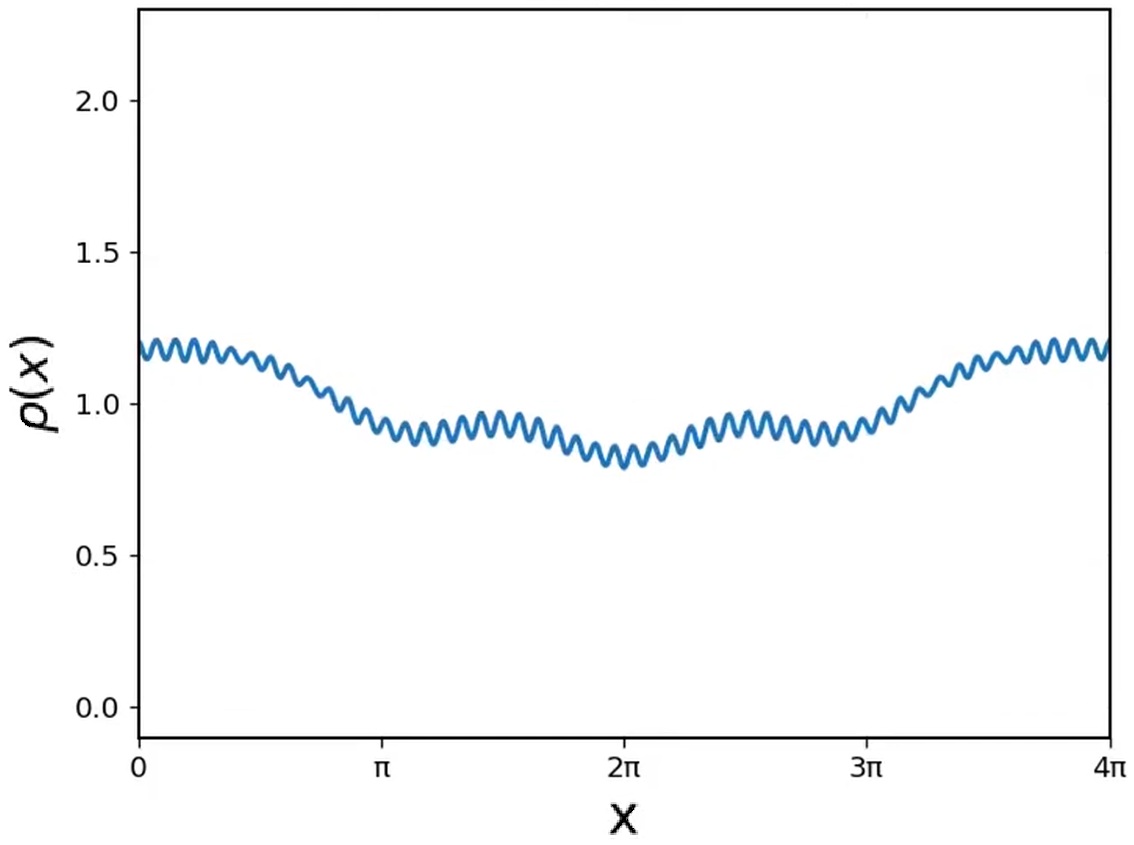}
        \caption{}
        \label{fig:rho_full_1}
    \end{subfigure}

    \caption{
Density \(\rho(x)=\int f(x,v)\,dv\) for the two-stream instability at \(t=40\), with \(H=1\), \(N_x=N_v=512\), CFL \(=10\), \(x\in[0,4\pi]\), and \(v\in[-2\pi,2\pi]\). Panel (a) shows the conservative adaptive rank solution, panel without an explicit Gaussian filter. Panel (b) shows the full-rank solution with Gaussian filter width \(\sigma=4\), and panel (c) shows the full-rank solution with Gaussian filter width \(\sigma=1\). The low-rank result is computed without the explicit full-rank Gaussian filter. 
The weakly filtered full-rank run shows visible grid-scale oscillations, whereas the adaptive rank solution is closer to the strongly filtered result, indicating that ACA-SVD compression suppresses unresolved high-frequency structure.
}\label{fig:rho_diff}
\end{figure}

A practical difference between the full-rank and adaptive rank implementations is the role of filtering. In the full-rank solver, an explicit Gaussian filter is used to control high-frequency oscillations generated by the Wigner update. The adaptive rank method, by contrast, does not require such an additional filtering step. The truncation in the ACA-SVD recompression already removes components that are not represented in the dominant low-dimensional subspace.

To quantify the filtering effect suggested by the density profiles in Fig.~\ref{fig:rho_diff}, we compare the Fourier spectra of the adaptive rank solution with full-rank solutions computed using strong and weak Gaussian filters. For each solution, we subtract the phase-space mean and compute the normalized spectral energy
\[
    \mathcal E(k_x,k_v)
    =
    \frac{|\widehat f(k_x,k_v)|^2}
    {\sum_{k_x,k_v}|\widehat f(k_x,k_v)|^2}.
\]
Fig.~\ref{fig:spectra} shows the folded spatial and velocity marginal spectra,
\[
    P_x(|k_x|)
    =
    \sum_{k_v}\mathcal E(k_x,k_v),
    \qquad
    P_v(|k_v|)
    =
    \sum_{k_x}\mathcal E(k_x,k_v),
\]
together with the high-frequency energy fraction outside the box
\[
    \max\left(
    \frac{|k_x|}{k_{x,\max}},
    \frac{|k_v|}{k_{v,\max}}
    \right) > 0.35 .
\]
In Fig.~\ref{fig:spatial_spectrum}, the spatial marginal spectra for the adaptive rank and strongly filtered full-rank solutions remain close over most resolved spatial modes. In Fig.~\ref{fig:velocity_spectrum}, the separation is strongest in velocity: the adaptive rank spectrum decays most rapidly at high $|k_v|$. Fig.~\ref{fig:high_frequency} summarizes this effect by showing that the adaptive rank high-frequency energy fraction $7.33\times 10^{-2}$ is essentially the same as the strongly filtered full-rank value and lower than the weakly filtered result.

This observation is important for the conservative algorithm. The Fermi-Dirac-type macroscopic correction is applied after the low-rank kinetic update, and the corrected state is then incorporated through ACA-SVD compression. Since the low-rank representation already suppresses unresolved high-frequency content, the conservative correction can be imposed without adding a separate full-rank filtering step.

\begin{figure}[htbp]
    \centering

    \begin{subfigure}{0.49\textwidth}
        \centering
        \includegraphics[width=\linewidth]{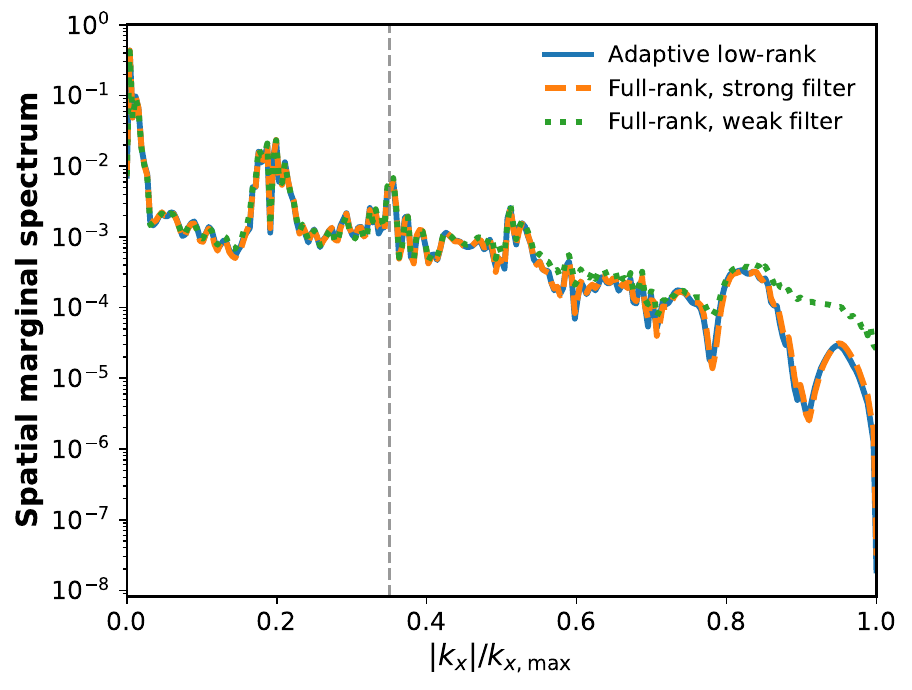}
        \caption{}
        \label{fig:spatial_spectrum}
    \end{subfigure}
    \hfill
    \begin{subfigure}{0.49\textwidth}
        \centering
        \includegraphics[width=\linewidth]{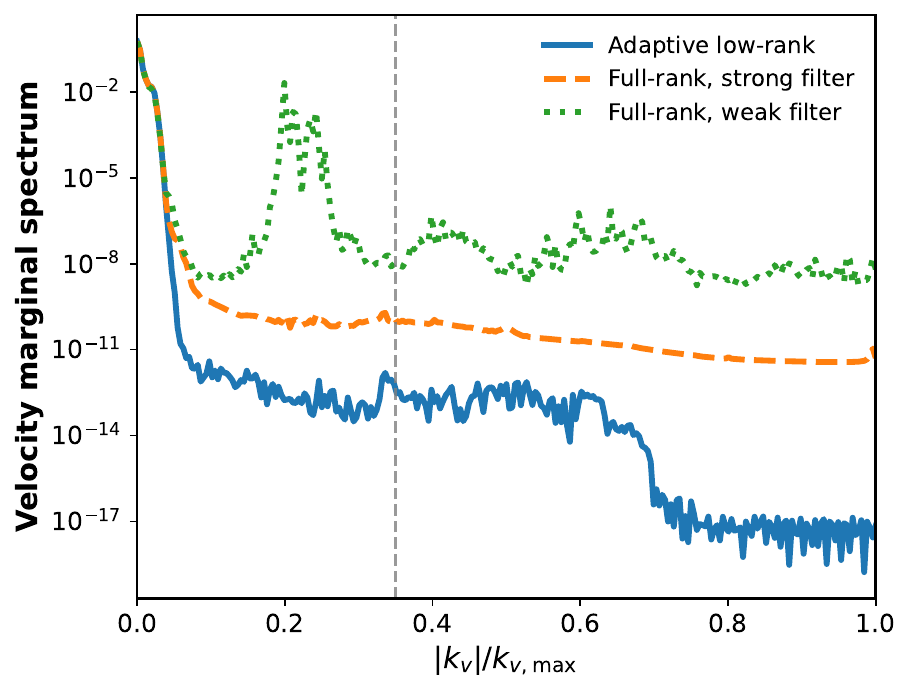}
        \caption{}
        \label{fig:velocity_spectrum}
    \end{subfigure}

    \vspace{1em} 

    \begin{subfigure}{0.49\textwidth}
        \centering
        \includegraphics[width=\linewidth]{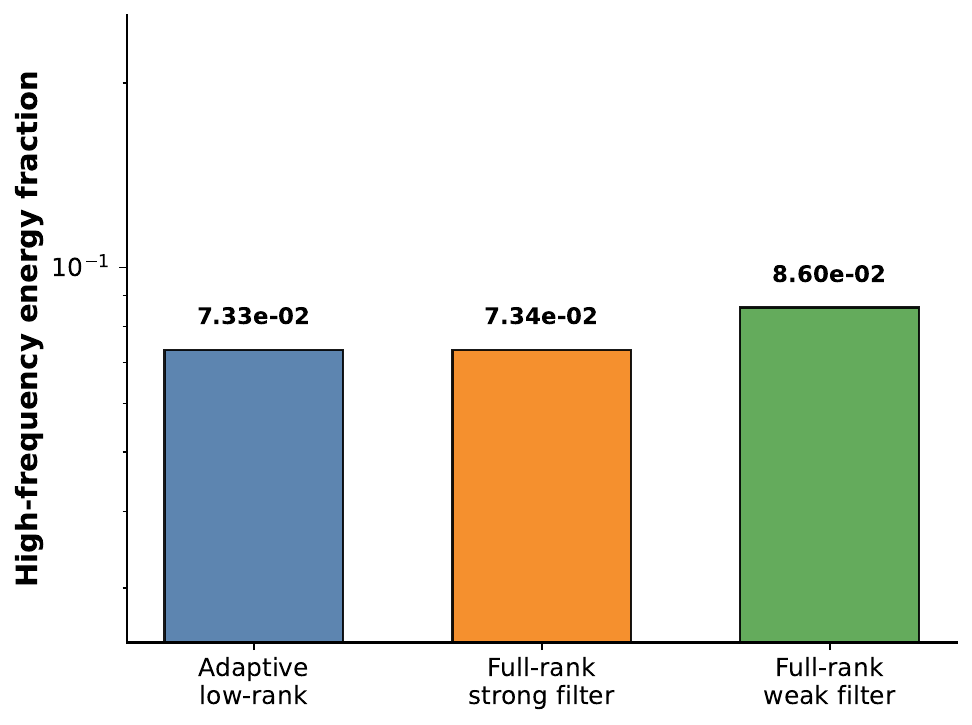}
        \caption{}
        \label{fig:high_frequency}
    \end{subfigure}

    \caption{
Spectral comparison for the two-stream instability at \(t=45\), with \(H=1\), \(N_x=N_v=512\), and CFL \(=10\). The adaptive rank solution is compared with full-rank solutions using strong and weak Gaussian filtering. \textbf{(a)} shows the normalized spatial marginal spectrum as a function of \(|k_x|/k_{x,\max}\), \textbf{(b)} shows the normalized velocity marginal spectrum as a function of \(|k_v|/k_{v,\max}\) where shows the clearest damping of high-frequency content by the adaptive rank representation, while \textbf{(c)} confirms the same trend reporting the high-frequency energy fraction outside the cutoff \(\kappa_c=0.35\). The adaptive rank solution has high-frequency fraction \(7.33\times10^{-2}\), close to the strongly filtered full-rank result \(7.34\times10^{-2}\), while the weakly filtered full-rank run retains \(8.60\times10^{-2}\). This indicates that the adaptive rank representation suppresses unresolved high-frequency content without the explicit full-rank Gaussian filter.
}\label{fig:spectra}
\end{figure}

\begin{algorithm}[H]
\caption{Low-rank incorporation of the conservative correction}
\label{alg:low_rank_conservative_correction}
\begin{algorithmic}[1]
\Require Low-rank factors
\(U^{\ast,n}\), \(\Sigma^{\ast,n}\), \(V^{\ast,n}\),
time step \(\Delta t\), velocity grid \(\{v_j\}\), ACA-SVD tolerance
\(\varepsilon_{\rm svd}\), and stored correction payload
\(\mathcal C^n\). Set \(\mathcal C^0_{i,j}=0\).

\State Define the corrected-start sampling function
\[
\mathcal G^n(i,j)
=
\sum_{\ell=1}^{r_n}
U^{\ast,n}_{i,\ell}
\Sigma^{\ast,n}_{\ell,\ell}
V^{\ast,n}_{j,\ell}
+
\mathcal C^n_{i,j}.
\]

\State Use \(\mathcal G^n\) as the input function handle in the adaptive
low-rank Wigner-Poisson solver of \cite{christlieb2025sampling}
to obtain the provisional low-rank state
\[
f^{\ast,n+1}
=
U^{\ast,n+1}\Sigma^{\ast,n+1}(V^{\ast,n+1})^T .
\]

\State Compute provisional moments
\(\rho_i^\ast\), \((\rho u)_i^\ast\), and \({\mathcal{T}}_i^\ast\)
from the low-rank factors using
\eqref{f_low}-\eqref{T_low}.

\State Solve Poisson's equation using the density specified by the time
discretization, and compute \(\Phi^{n+1}\) and \(E^{n+1}\).

\State Solve the implicit conservative density-momentum system
\eqref{Newton_method},\eqref{fluid_model}
to obtain target moments
\(\rho_i^{n+1}\) and \((\rho u)_i^{n+1}\).

\State Construct the Fermi-Dirac-type correction
\(\Delta F^{n+1}_{i,j}\) from
\((\rho_i^\ast,u_i^\ast,{\mathcal{T}}_i^\ast)\) to
\((\rho_i^{n+1},u_i^{n+1},{\mathcal{T}}_i^\ast)\)
using \eqref{fermi_rec}.

\State Compute the global quadratic moment multiplier
\[
a_j^{n+1}
=
k_0^{n+1}
+
k_1^{n+1}v_j
+
k_2^{n+1}v_j^2
\]
from \eqref{lagrange_system}.

\State Store the correction for the next function handle:
\[
\mathcal C^{n+1}_{i,j}
=
a_j^{n+1}
\left(
f^{\ast,n+1}_{i,j}
+
\Delta F^{n+1}_{i,j}
\right)
-
f^{\ast,n+1}_{i,j}.
\]

\State Advance \(t\leftarrow t+\Delta t\).

\end{algorithmic}
\end{algorithm}

In the current 1D1V implementation the correction term $\mathcal C^{n}$ is stored densely on the $N_x \times N_v $ grid. This is sufficient for the test reported here, but in its not a scalable high-dimensional implementation. For higher-dimensional implementations, this would not be scalable.

\section{Numerical results}\label{sec:numerical results}

The numerical tests are organized around the three claims made in the introduction: accuracy on standard Wigner-Poisson benchmark dynamics, preservation of the targeted global invariants, and retention of low-rank computational efficiency. Each benchmark therefore reports phase-space or density dynamics, electric-energy evolution, adaptive rank behavior, and conservation diagnostics. We emphasize that the conservation diagnostics are global discrete diagnostics computed from the final corrected distribution. They should not be interpreted as local or pointwise density/momentum conservation statements after Gaussian filtering, after the global quadratic moment correction or after ACA-SVD recompression.

We consider three standard 1D1V kinetic benchmarks: two-stream instability, strong Landau damping, and bump-on-tail instability. These tests probe different aspects of the algorithm. The two-stream instability generates nonlinear phase-space deformation, the strong Landau damping problem tests damping and long-time energy exchange, and the bump-on-tail problem examines a beam-driven nonequilibrium instability. Unless otherwise stated, all simulations are performed with $N_x=N_v =512$ and a CFL number of $10$, $\Delta t = \mathrm{CFL} \; * \; \frac{dx}{V}$, with $\Delta x = \frac{L}{Nx}$, $\Delta v=\frac{2V}{Nv}$. For each problem, we report  the electric energy, the adaptive numerical rank, and the conservation errors in mass, momentum, and total energy.

A central point of these experiments is that the conservative correction is built from the Wigner-Poisson moment system, and uses a Fermi-Dirac-type reconstruction, rather than the Maxwellian reconstruction used in other conservative methods for Wigner-Poisson \cite{bonilla2005wigner} and classical Vlasov-Poisson methods. Thus, the results below are intended not only to demonstrate conservation, but also to verify that the quantum-statistical correction is compatible with the adaptive rank representation. In the tests that follow, we observe that the method captures the expected dependence on the quantum parameter $H$, preserves the global discrete invariants to machine precision, and maintains moderate ranks throughout the simulation.

\begin{table}[t]
\centering
\caption{Benchmark parameters for the numerical experiments. Unless otherwise
stated, \(N_x=N_v=512\), CFL \(=10\), and
\(\varepsilon_{\rm svd}=10^{-3}\).}
\label{tab:benchmark-parameters}
\begin{tabular}{lccccc}
\toprule
Benchmark  & \(x\)-domain & \(v\)-domain & \(t_{\rm final}\) & \(H\) values \\
\midrule
Two-stream instability 
& \([0,4\pi]\) 
& \([-2\pi,2\pi]\) 
& \(50\) 
& \(1,2,8\) \\

Strong Landau damping  
& \([0, \frac{2\pi}{0.4}]\) 
& \([-2\pi,2\pi]\) 
& \(50\) 
& \(1,2,8\) \\

Bump-on-tail instability  
& \([0, \frac{2\pi}{0.3}]\) 
& \([-4\pi, 4\pi]\) 
& \(30\) 
& \(1,2,8\) \\

Runtime study  
& \([0,4\pi]\) 
& \([-2\pi,2\pi]\) 
& \(5\) 
& \(1,2,4,8\) \\

CFL sensitivity 
& \([0,4\pi]\) 
& \([-2\pi,2\pi]\) 
& \(50\) 
& \(1\) \\

SVD tolerance
& \([0,4\pi]\) 
& \([-2\pi, 2\pi]\)
& \(25\)
& \(1\) \\
\bottomrule
\end{tabular}
\end{table}

\subsection{Computational efficiency and rank behavior}

We first compare the computational cost of the proposed low-rank method with the corresponding full-rank solver. Fig. \ref{fig:walltime} reports the wall-clock time as the spatial resolution is refined for $H=1,2,8$. The full-rank method exhibits significantly faster growth in runtime because it evolves the entire phase-space solution. In contrast, the proposed low-rank method evolves a compressed representation of the distribution function and adaptively increases the rank only when additional phase-space complexity is required.

\begin{figure}
    \centering
    \begin{subfigure}{0.49\textwidth}
        \centering
        \includegraphics[width=\linewidth]{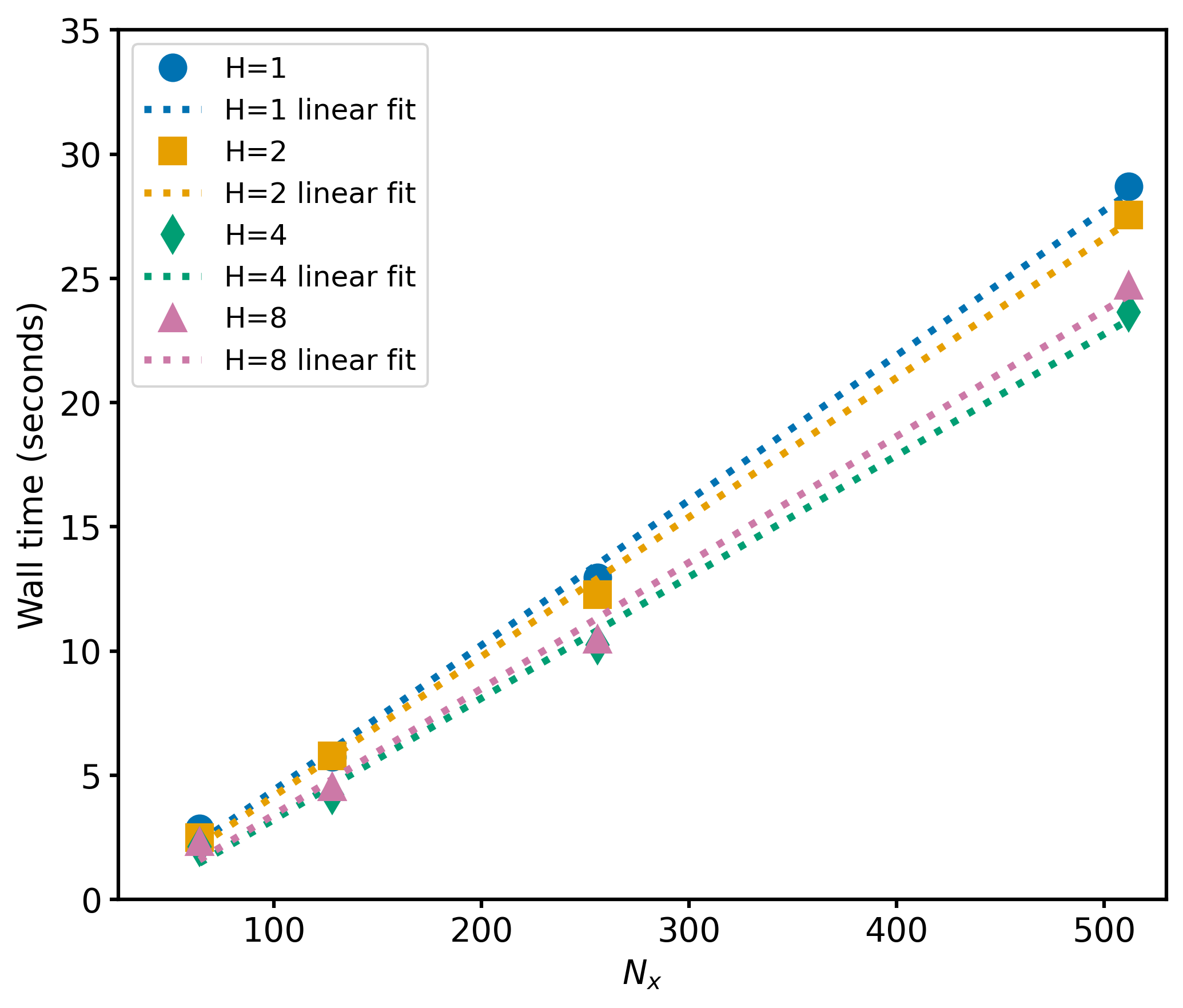}
        \caption{Low rank method}
        \label{fig:low-rank}
    \end{subfigure}
    \hfill
    \begin{subfigure}{0.49\textwidth}
        \centering
        \includegraphics[width=\linewidth]{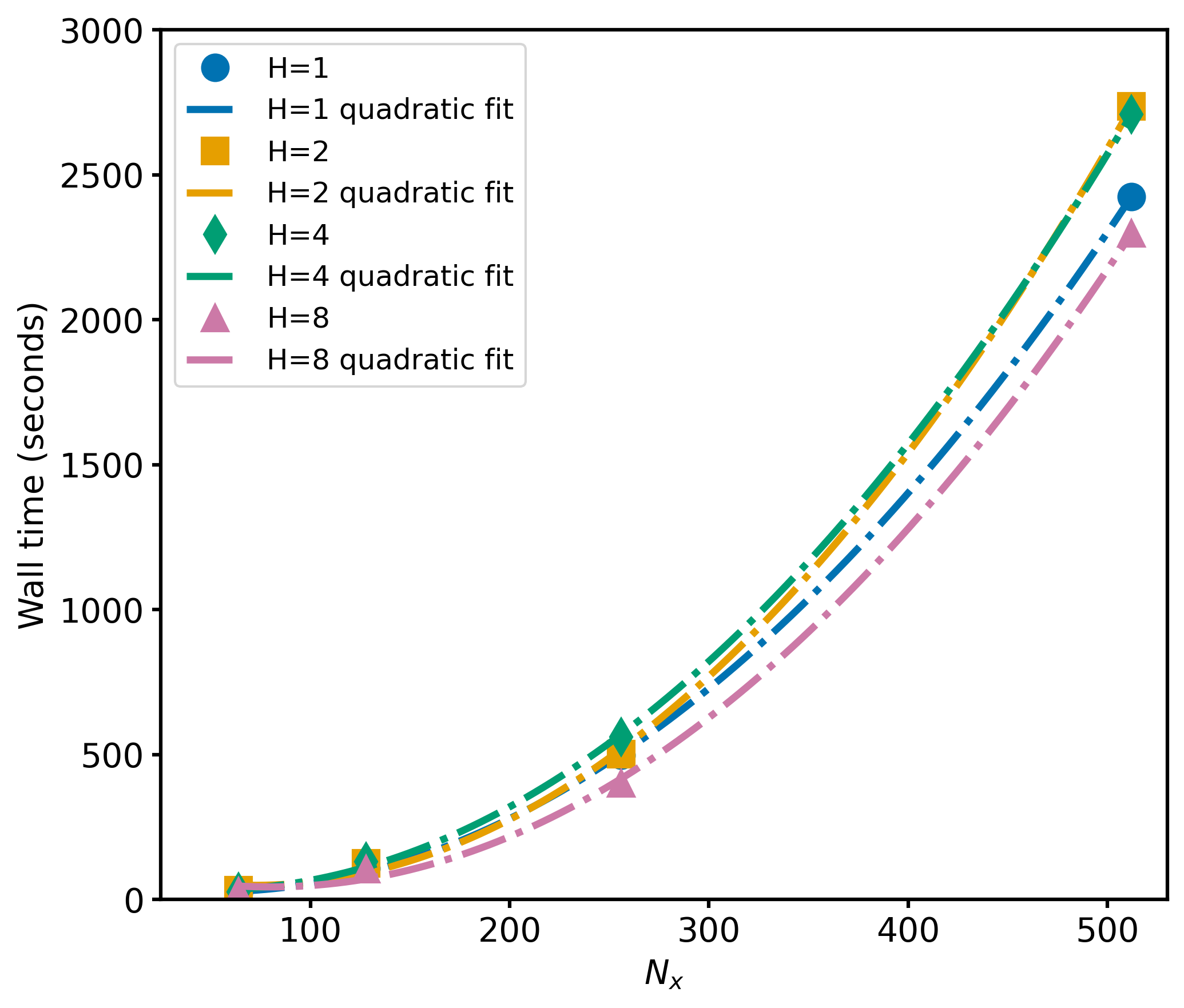}
        \caption{Full-rank method}
        \label{fig:full-rank}
    \end{subfigure}
    \caption{
Wall-clock runtime versus \(N_x=N_v\) for the two-stream instability with \(H=1,2,4,8\),  \(dt=0.01\), final time \(t=5\), and SVD tolerance \(\varepsilon_{\rm svd}=10^{-3}\). Times are measured in serial using one core of an Apple M4 Pro chip in C++. \textbf{(a)} shows the conservative adaptive rank method which shows approximately linear growth, and \textbf{(b)} shows the corresponding full-rank method which shows approximately quadratic growth. Thus the conservative correction preserves the computational advantage.
}
    \label{fig:walltime}
\end{figure}

The timing results show that the low-rank method provides a clear computational advantage over the full-rank method. More importantly, the conservative correction does not remove this advantage. Although the method enforces macroscopic conservation through the Fermi-Dirac reconstruction and the energy correction, the additional cost remains small compared with the cost of evolving the full phase-space solution.

Fig.~\ref{fig:rank_con_vs_noncon} compares the numerical rank of the conservative and non-conservative full-rank two-stream simulations. At each time, we compute the number of singular modes required to retain \(95\%\), \(99\%\), and \(99.99\%\) of the energy. The conservative correction consistently requires fewer modes, most notably at the \(99.99\%\) threshold, where the non-conservative solution requires substantially higher ranks over most of the simulation. This shows that the correction does not cause rank inflation; instead, it makes the full-rank solution more compressible and therefore more compatible with a low-rank representation.

We next test the dependence of the adaptive rank solution on the ACA-SVD truncation tolerance. The purpose of this study is to quantify the accuracy-rank tradeoff associated with the compressed representation. We use the two-stream instability with \(H=1\), \(N_x=N_v=512\),
\(x\in[0,4\pi]\), \(v\in[-2\pi,2\pi]\), CFL \(=10\), and final time \(t=25\). Table~\ref{tab:svd_cur_absolute_tolerance_study} shows the expected accuracy-rank tradeoff. Reducing $\varepsilon_{\rm SVD}$ from $10^{-2}$ to $10^{-4}$ decreases the sampled phase-space errors by roughly two orders of magnitude, while the maximum rank increases only from $18$ to $25$. Tightening the tolerance to $10^{-5}$ gives the reference run and increases the maximum rank to $45$. These results support the use of $\varepsilon_{\rm SVD}=10^{-3}$ as a practical default in the benchmark tests; it keeps the rank small while giving errors close to the tighter tolerances on the tested time interval. If the truncation tolerance is made sufficiently small, the adaptive rank representation may recover the full-rank behavior. In that regime, the same high-rank components that require filtering in the full-rank computation can reappear, and the error may increase unless the full-rank filtering strategy is also applied. Therefore, the table is intended to illustrate the meaningful low-rank accuracy-rank tradeoff before the computation effectively returns to the full-rank regime.

\begin{table}[htbp]
\centering
\caption{ACA-SVD tolerance sensitivity for the two-stream instability with
\(H=1\), \(N_x=N_v=512\), \(x\in[0,4\pi]\), \(v\in[-2\pi,2\pi]\),
CFL \(=10\), and final comparison time \(t=25\). For each row,
the CUR tolerance is chosen one order of magnitude smaller than the SVD
tolerance. Phase-space errors are computed relative to the run with
\((\varepsilon_{\rm svd},\varepsilon_{\rm cur})=(10^{-5},10^{-6})\).
The table reports the maximum absolute phase-space difference over the
sampled time interval \(t\le 25\) and the maximum adaptive rank.}
\label{tab:svd_cur_absolute_tolerance_study}
\begin{tabular}{c c c c}
\hline
\((\varepsilon_{\rm svd},\varepsilon_{\rm cur})\)
&
\(\max \| f-f_{\rm ref}\|_2\)
&
\(\max \|f-f_{\rm ref}\|_\infty\)
&
max Rank
\\
\hline
\((10^{-2},10^{-3})\) & \(1.1267\times 10^{-3}\) & \(1.3377\times 10^{-2}\) & \(18\) \\
\((10^{-3},10^{-4})\) & \(1.0177\times 10^{-4}\) & \(9.4892\times 10^{-4}\) & \(21\) \\
\((10^{-4},10^{-5})\) & \(6.6490\times 10^{-6}\) & \(1.2383\times 10^{-4}\) & \(25\) \\
\((10^{-5},10^{-6})\) & reference & reference & \(45\) \\
\hline
\end{tabular}
\end{table}

\begin{figure}
    \centering
    \includegraphics[width=0.65\linewidth]{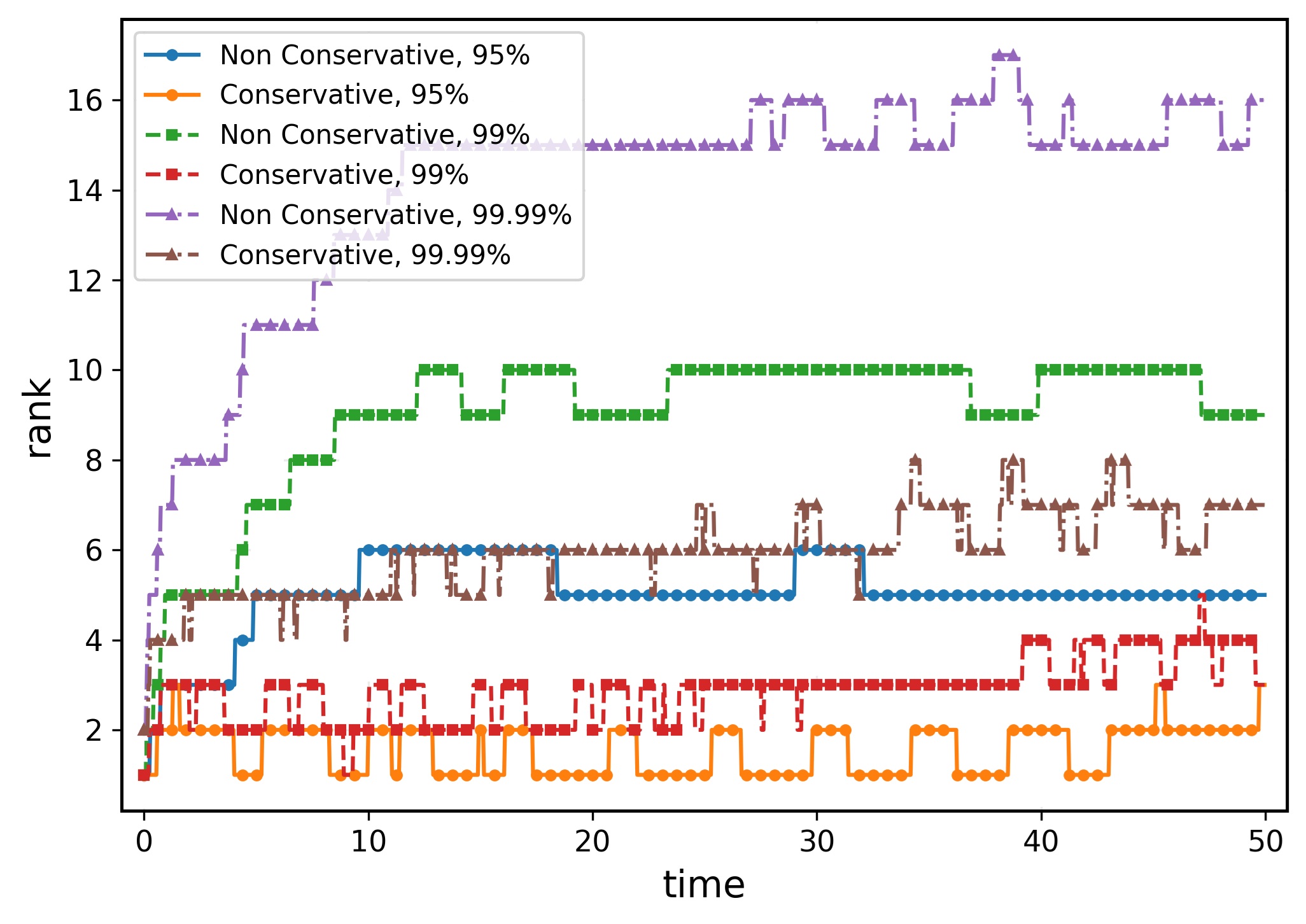}
    \caption{
Numerical-rank diagnostic for full-rank two-stream simulations with \(H=1\), \(N_x=N_v=512\), CFL \(=10\), and \(t=50\). The curves compare the conservative full-rank solver with the kinetic full-rank solver without conservative correction. The plotted ranks are the numbers of singular modes required to retain \(95\%\), \(99\%\), and \(99.99\%\) of the energy. The largest separation occurs for the $99.99\%$ energy threshold: the non-conservative curve climbs to roughly twice the conservative rank over much of the simulation. The $95\%$ and $99\%$ thresholds show the same direction but with smaller gaps. This supports the claim that the correction does not inflate rank; it makes the full-rank solution more compressible.
}
    \label{fig:rank_con_vs_noncon}
\end{figure}

Together, these results show that, in the 1D1V periodic setting tested here, the conservative correction is compatible with adaptive rank compression. The method maintains bounded ranks while preserving the specified global discrete invariants. These results are encouraging for higher-dimensional extensions, but such extensions will require a sampled, low-rank, or tensor representation of the correction term rather than the dense storage used in the present implementation.

\subsection{Two-stream instability}

We simulate the two-stream instability with the initial condition
\begin{equation}
    f(x,v,t=0)=\frac{v^2}{\sqrt{8\pi}}\left( 2+\cos \left( \frac{x}{2} \right) \right)\exp \left( -\frac{v^2}{2} \right)
\end{equation}
used in \cite{christlieb2025sampling}. 

In Fig.~\ref{fig:TSI_phase_H1}, the $H=1$ solution develops a sharply deformed central structure with fine horizontal oscillations. Fig.~\ref{fig:TSI_phase_H2} $H=2$ solution retains the same central deformation but with less fine-scale structure. Fig.~\ref{fig:TSI_phase_H8}, for $H=8$, is dominated by smoother horizontal bands, showing that larger H suppresses the more classical filamentary structure. This behavior is consistent with the role of $H$ as the dimensionless quantum parameter: smaller $H$ corresponds to a more classical, filamentary regime, while larger $H$ produces stronger quantum regularization \cite{christlieb2025sampling}.

\begin{figure}[htbp]
    \centering
    \begin{subfigure}{0.49\textwidth}
        \centering
        \includegraphics[width=\linewidth]{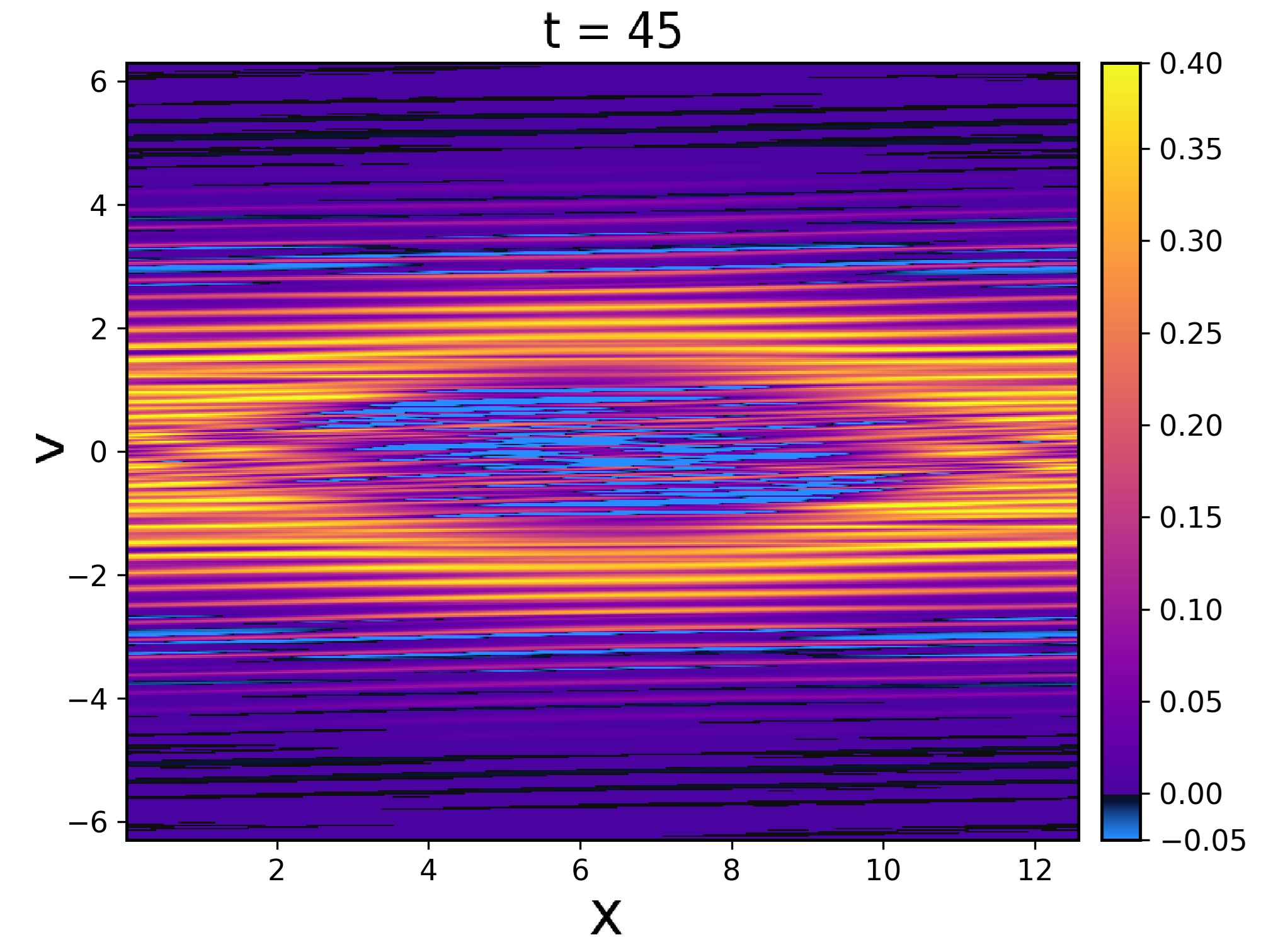}
        \caption{H=1}
        \label{fig:TSI_phase_H1}
    \end{subfigure}
    \hfill
    \begin{subfigure}{0.49\textwidth}
        \centering
        \includegraphics[width=\linewidth]{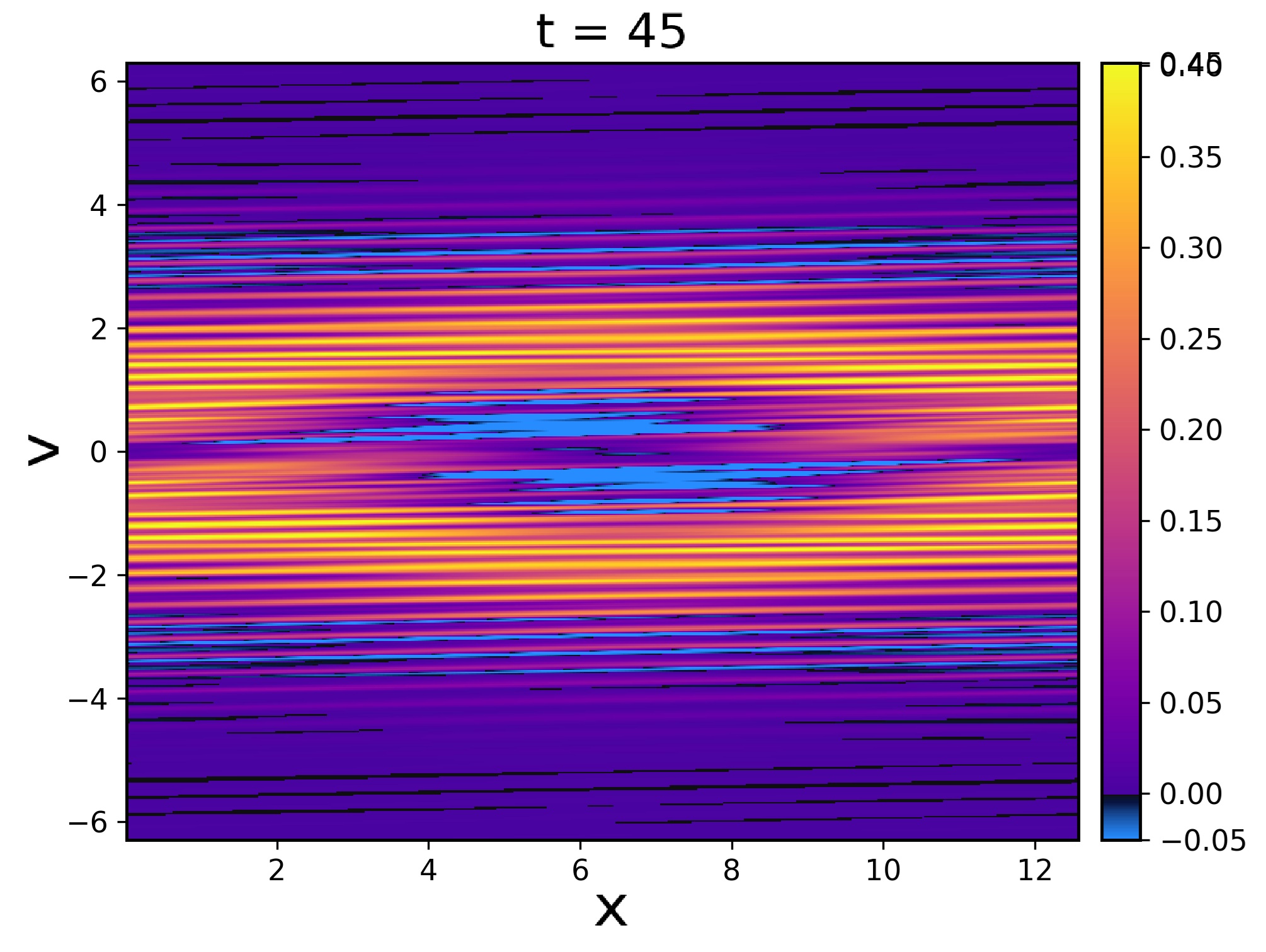}
        \caption{H=2}
        \label{fig:TSI_phase_H2}
    \end{subfigure}

    \vspace{1em} 

    \begin{subfigure}{0.49\textwidth}
        \centering
        \includegraphics[width=\linewidth]{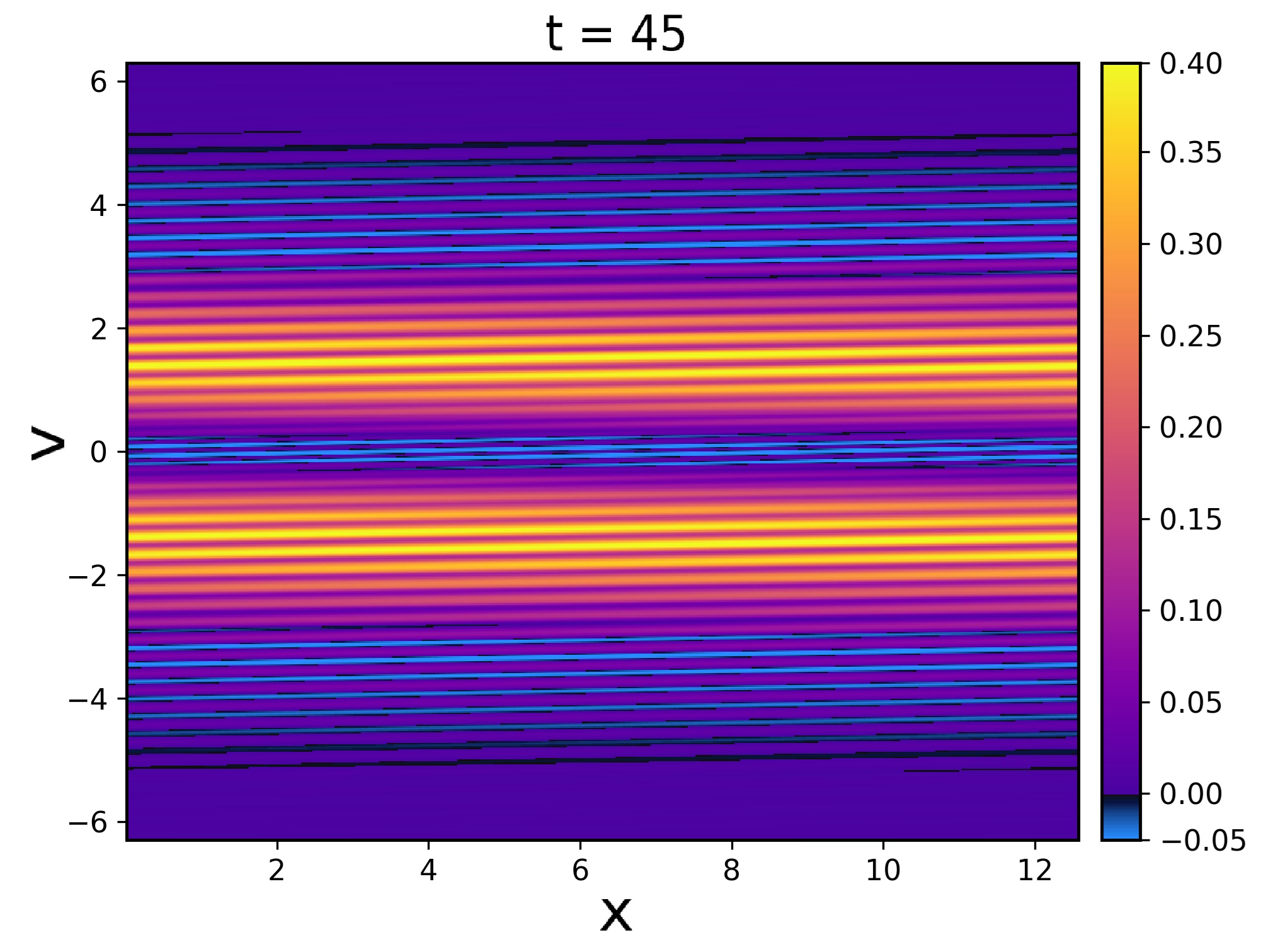}
        \caption{H=8}
        \label{fig:TSI_phase_H8}
    \end{subfigure}

    \caption{
Phase-space distribution \(f(x,v,t)\) for the two-stream instability at \(t=45\), with \(N_x=N_v=512\), CFL \(=10\), \(x\in[0,4\pi]\), and \(v\in[-2\pi,2\pi]\). Panels (a)--(c) show the conservative adaptive rank solution for \(H=1\), \(H=2\), and \(H=8\), respectively. Across the panels, increasing $H$ reduces the sharp central phase-space deformation and leaves smoother, more horizontally banded quantum-dispersive structure
}
    \label{fig:TSI_phase}
\end{figure}

The corresponding diagnostics are shown in Fig. \ref{fig:TSI_electric_energy} shows that the $H=1$ and $H=2$ cases undergo nonlinear growth followed by sustained oscillations, while $H=8$ case damps more strongly. Fig~ \ref{fig:TSI_rank_used} shows that the adaptive rank rises during the instability and then plateaus, with the largest retained rank for $H=1$. Figs.~\ref{fig:TSI_total_mass}-\ref{fig:TSI_total_energy} show that the mass, momentum and total energy deviations remain near machine precision throughout the run. Together, these results demonstrate that the proposed Fermi-Dirac-type correction and global moment correction successfully restore the macroscopic invariants after the non-conservative low-rank kinetic update.

\begin{figure}[htbp]
    \centering

    \begin{subfigure}{0.42\textwidth}
        \centering
        \includegraphics[width=\linewidth]{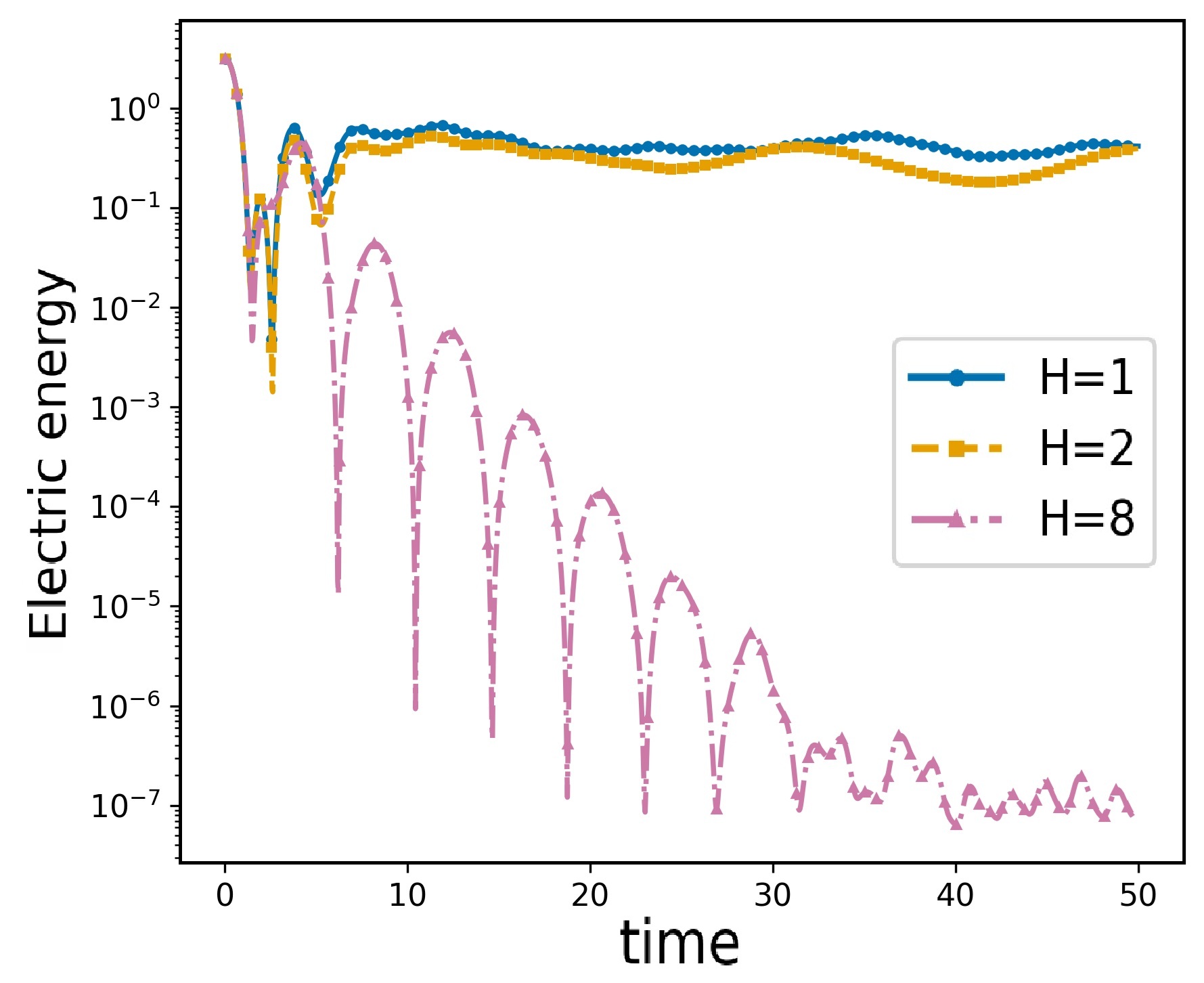}
        \caption{}
        \label{fig:TSI_electric_energy}
    \end{subfigure}
    \hfill
    \begin{subfigure}{0.42\textwidth}
        \centering
        \includegraphics[width=\linewidth]{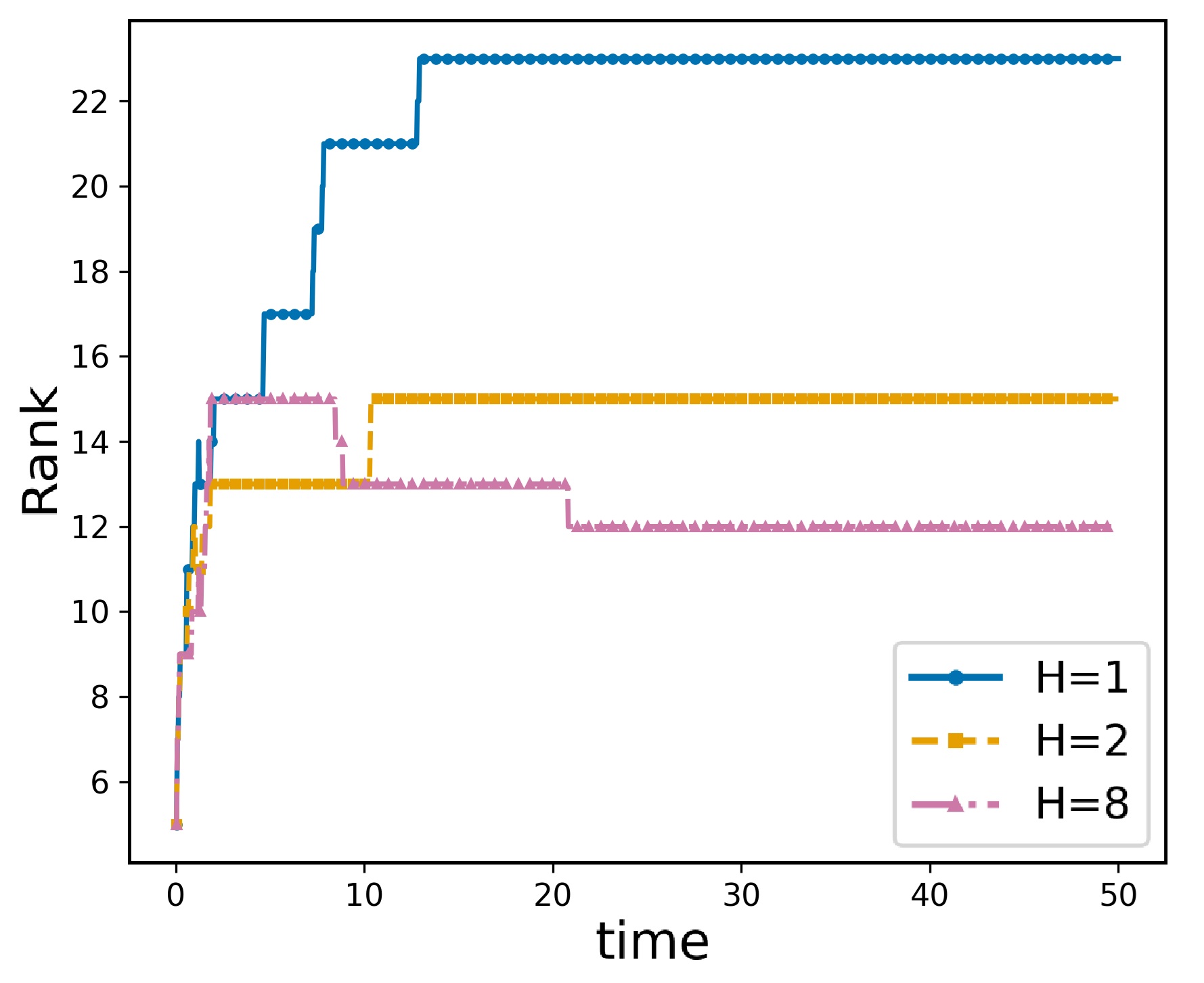}
        \caption{}
        \label{fig:TSI_rank_used}
    \end{subfigure}

    \vspace{1em} 

    \begin{subfigure}{0.42\textwidth}
        \centering
        \includegraphics[width=\linewidth]{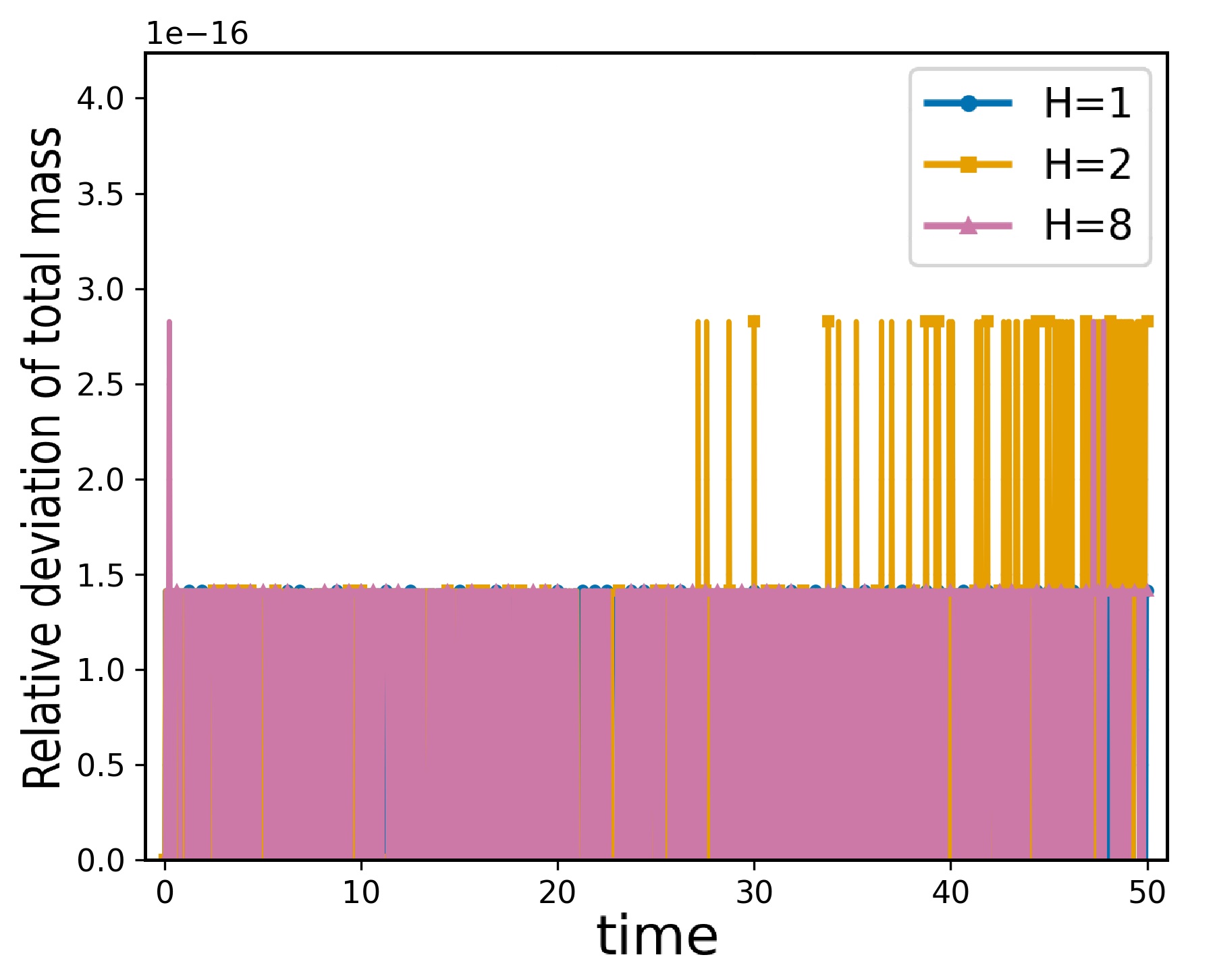}
        \caption{}
        \label{fig:TSI_total_mass}
    \end{subfigure}
    \hfill
    \begin{subfigure}{0.42\textwidth}
        \centering
        \includegraphics[width=\linewidth]{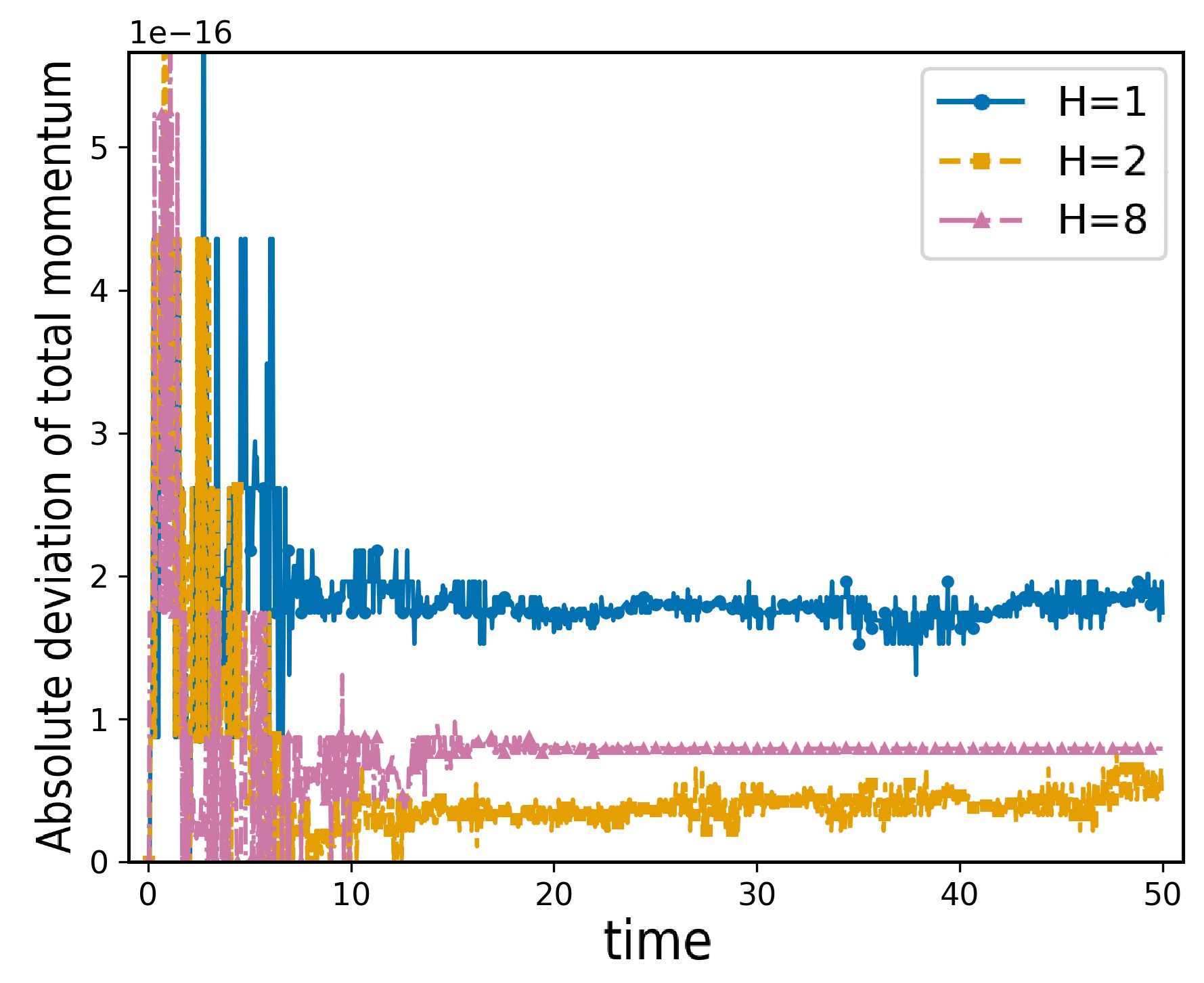}
        \caption{}
        \label{fig:TSI_total_momentum}
    \end{subfigure}

    \vspace{1em} 

    \begin{subfigure}{0.42\textwidth}
        \centering
        \includegraphics[width=\linewidth]{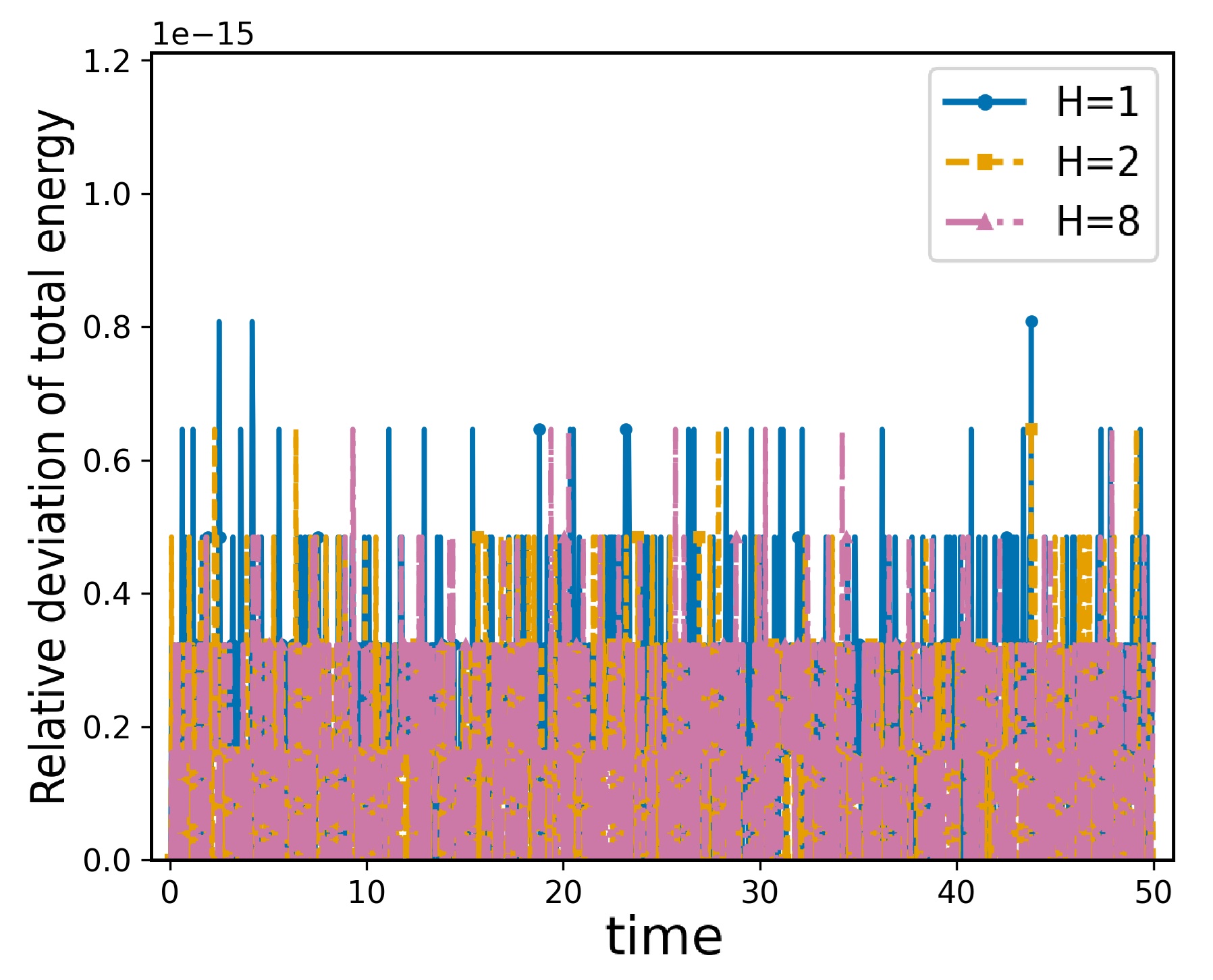}
        \caption{}
        \label{fig:TSI_total_energy}
    \end{subfigure}

    \caption{
Diagnostics for the two-stream instability with \(N_x=N_v=512\), CFL \(=10\), \(x\in[0,4\pi]\), \(v\in[-2\pi,2\pi]\), and \(H=1,2,8\). \textbf{(a)} shows the electric-field energy \(U(t)=\frac12\Delta x\sum_i E_i(t)^2\). \textbf{(b)} shows the adaptive rank retained after ACA-SVD recompression. \textbf{(c)}--\textbf{(e)} show the global conservation diagnostics computed from the final corrected low-rank state: relative mass deviation \(|M(t)-M(0)|/|M(0)|\), absolute deviation of total momentum \(|P(t) -P(0)|\), and relative total-energy deviation \(|\mathcal E_{tot}(t)-\mathcal E_{tot}(0)|/|\mathcal E_{tot}(0)|\), where \(\mathcal E_{tot}(t)=K(t)+U(t)\) and \(K(t)=\frac12\Delta x\Delta v\sum_{i,j}w_j v_j^2 f_{i,j}(t)\).
}\label{fig:TSI_diagnostics}
\end{figure}

This test is important because the two-stream instability produces strong nonlinear deformation in phase-space. The results show that the method can enforce conservation without  suppressing the physical instability and without requiring a full-rank representation.

\begin{table}[t]
\centering
\caption{CFL sensitivity study for the two-stream instability with $H=1$, $N_x=512$, and final time $t=50$. The CFL $=1$ solution is used as the reference. Here $N_t$ denotes the number of time steps, $\mathcal E_E$ is the electric energy, and $\Delta\mathcal E$ is the relative deviation of the total discrete energy. CFL $=100$ is included as a large-time-step stress test.}
\label{tab:cfl_sensitivity}
\begin{tabular}{c c c c c}
\hline
CFL & $N_t$ &
$\max_t |\mathcal E_E-\mathcal E_E^{\rm ref}|$ &
$\max_t \|f-f^{\rm ref}\|_2/\|f^{\rm ref}\|_2$ &
$\max_t \Delta \mathcal E$ \\
\hline
2   & 640 & $6.416\times 10^{-3}$ & $4.979\times 10^{-3}$ & $8.078\times 10^{-16}$ \\
5   & 256 & $2.533\times 10^{-2}$ & $1.972\times 10^{-2}$ & $6.462\times 10^{-16}$ \\
10  & 128 & $5.348\times 10^{-2}$ & $4.301\times 10^{-2}$ & $8.078\times 10^{-16}$ \\
50  & 26  & $1.880\times 10^{-1}$ & $1.660\times 10^{-1}$ & $4.847\times 10^{-16}$ \\
100 & 13  & $6.747\times 10^{-1}$ & $5.138\times 10^{-1}$ & $3.231\times 10^{-16}$ \\
\hline
\end{tabular}\label{tb: CFL}
\end{table}

Table \ref{tab:cfl_sensitivity} shows that the global moment correction makes the total energy diagnostic essentially insensitive to the time step, with energy errors remaining machine precision for all test CFL values. The electric energy and distribution function errors relative to the CFL$=1$ reference increase as the CFL grows. In particular, CFL$=100$ is included as a stress test.

\subsection{Strong Landau Damping}

We simulate strong Landau damping with the initial condition 
\begin{equation}
    f(x,v,t=0)=\frac{1}{\sqrt{2\pi}}\exp \left( -\frac{v^2}{2} \right) \left( 1+0.2 \cos(0.4x) \right)
\end{equation}
The domain is $x\in \left[ 0, \frac{2\pi}{0.4}\right],\; v\in[-2\pi, 2\pi]$ from \cite{christlieb2025sampling}

Fig.~\ref{fig:SLD_phase} shows that the strong Landau damping solution remains organized around a central velocity band but develops $H$-dependent fine-scale structure. Fig.~\ref{fig:SLD_phase_H1}, $H = 1$, contains the most visible velocity-space striations; Fig.~\ref{fig:SLD_phase_H2}, $H = 2$, is smoother; and Fig.~\ref{fig:SLD_phase_H8}, $H = 8$, has the smoothest banded structure. This trend is consistent with stronger quantum regularization as H increases.

\begin{figure}[htbp]
    \centering

    \begin{subfigure}{0.49\textwidth}
        \centering
        \includegraphics[width=\linewidth]{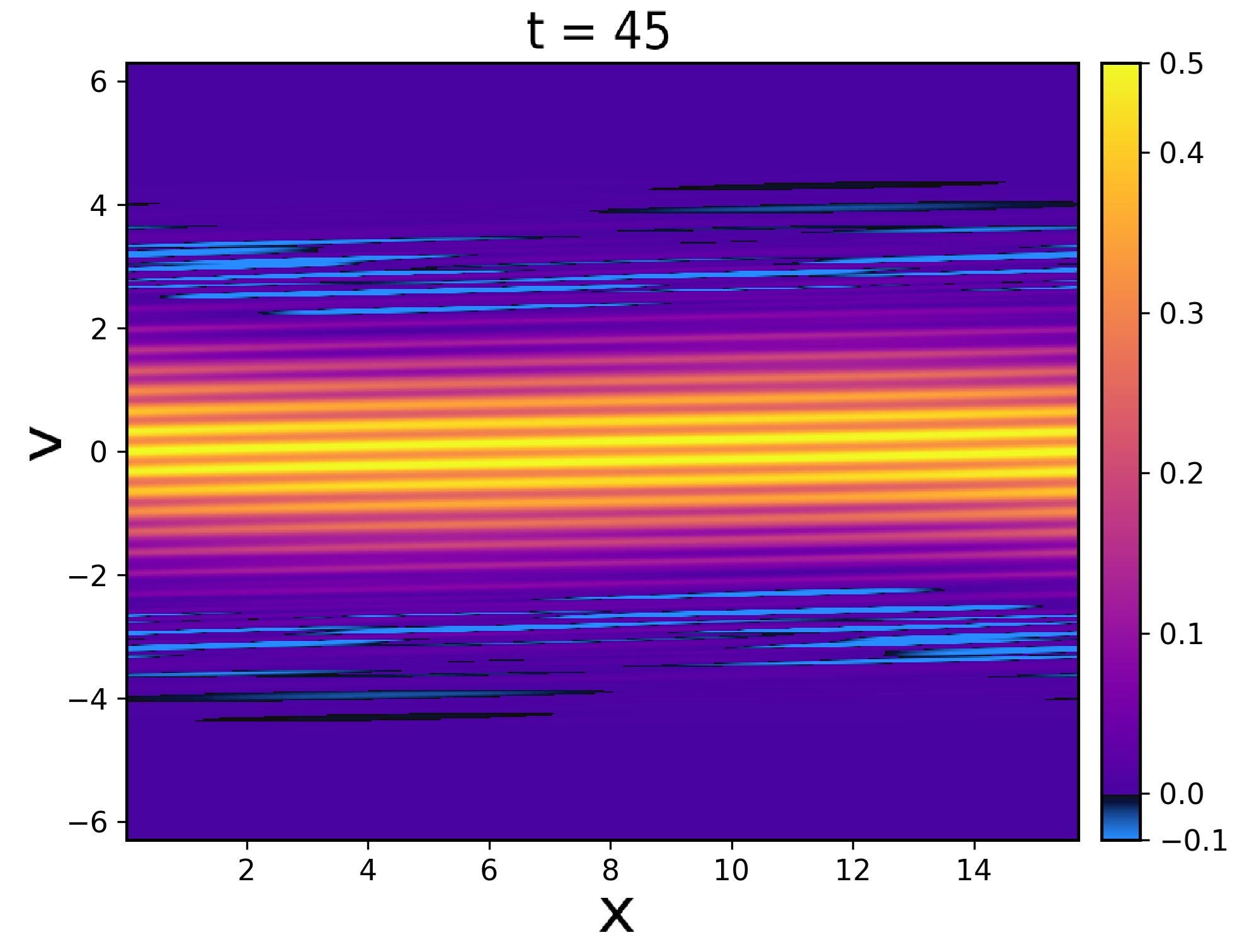}
        \caption{H=1}
        \label{fig:SLD_phase_H1}
    \end{subfigure}
    \hfill
    \begin{subfigure}{0.49\textwidth}
        \centering
        \includegraphics[width=\linewidth]{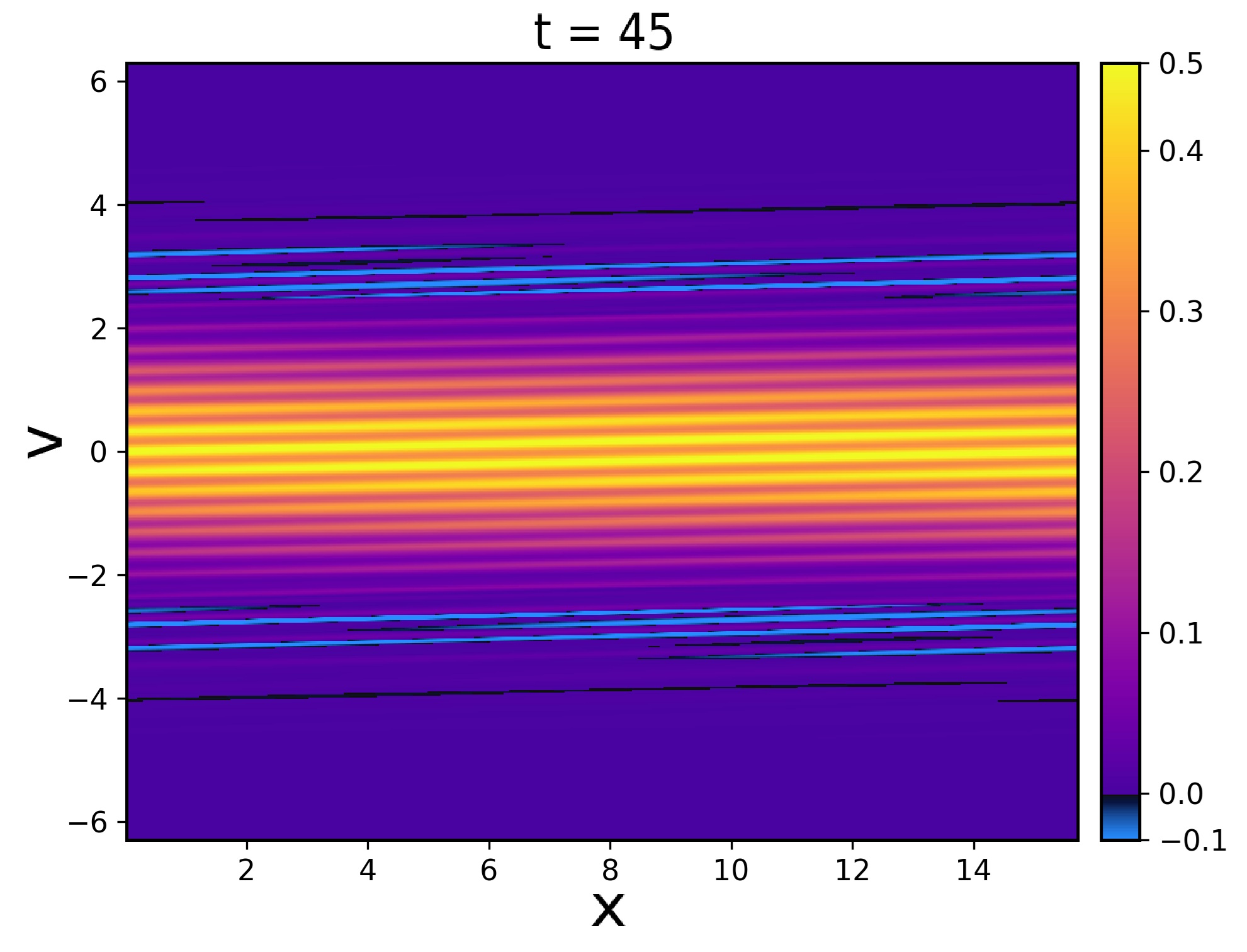}
        \caption{H=2}
        \label{fig:SLD_phase_H2}
    \end{subfigure}

    \vspace{1em} 

    \begin{subfigure}{0.49\textwidth}
        \centering
        \includegraphics[width=\linewidth]{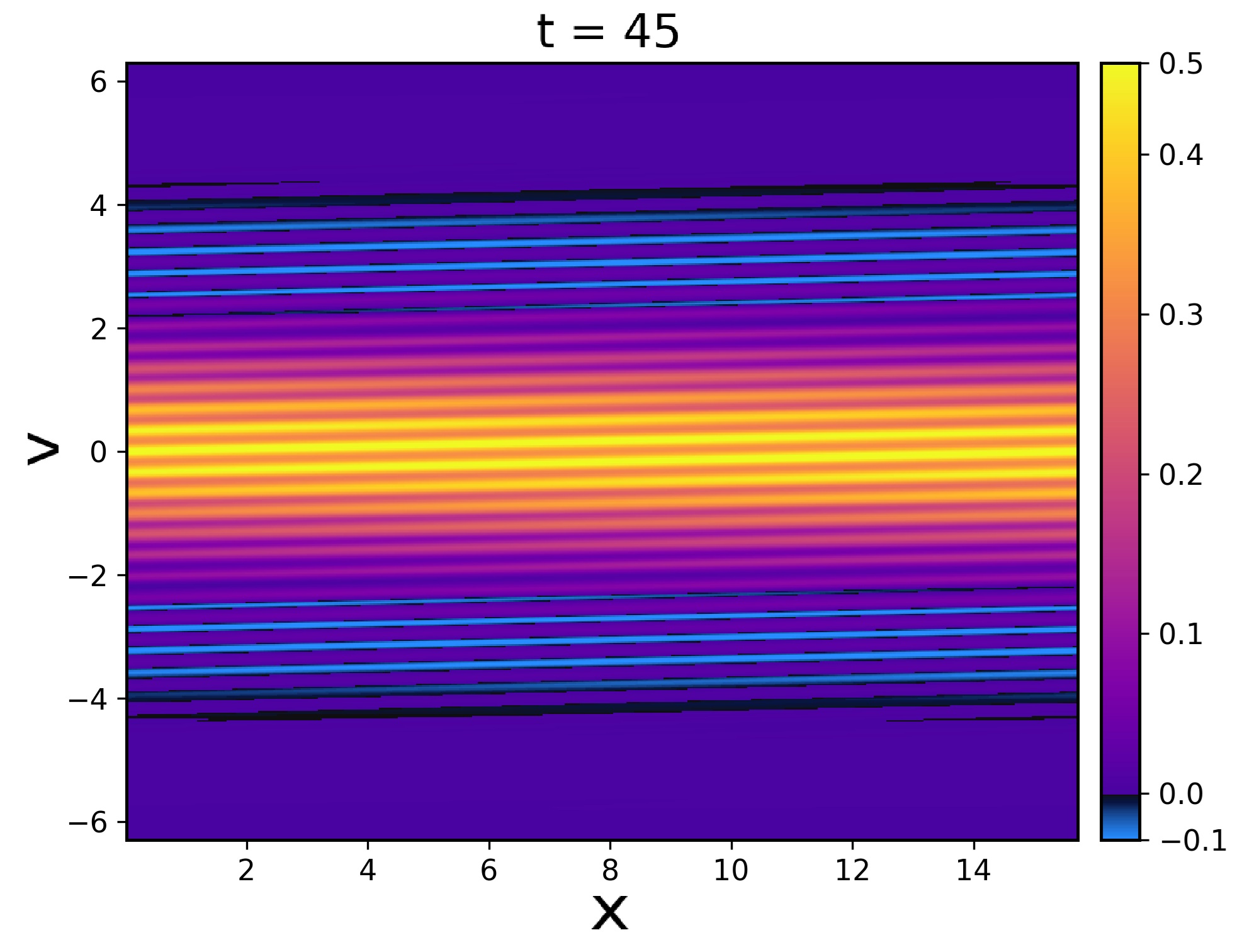}
        \caption{H=8}
        \label{fig:SLD_phase_H8}
    \end{subfigure}

    \caption{
Phase-space distribution \(f(x,v,t)\) for strong Landau damping at \(t=45\), with \(N_x=N_v=512\), CFL \(=10\), \(x\in[0,2\pi/0.4]\), and \(v\in[-2\pi,2\pi]\). Panels (a)--(c) show the conservative adaptive rank solution for \(H=1\), \(H=2\), and \(H=8\), respectively. As $H$ increases, the fine velocity-space filamentation is reduced and the phase-space bands become smoother, illustrating the regularizing effect of the quantum parameter in this damping-dominated benchmark.
}
    \label{fig:SLD_phase}
\end{figure}

As an internal consistency check, Fig~\ref{fig:SLD_phase2} compares the present density-momentum correction with an alternative global-moment correction implemented using the same numerical parameters \cite{christlieb2026structurepreservingadaptiverankapproachhighdimensional}. This comparison only checks that, for this periodic damping test, the present correction produces a phase-space solution comparable to an alternative conservative correction.

\begin{figure}[htbp]
    \centering

    \begin{subfigure}{0.49\textwidth}
        \centering
        \includegraphics[width=\linewidth]{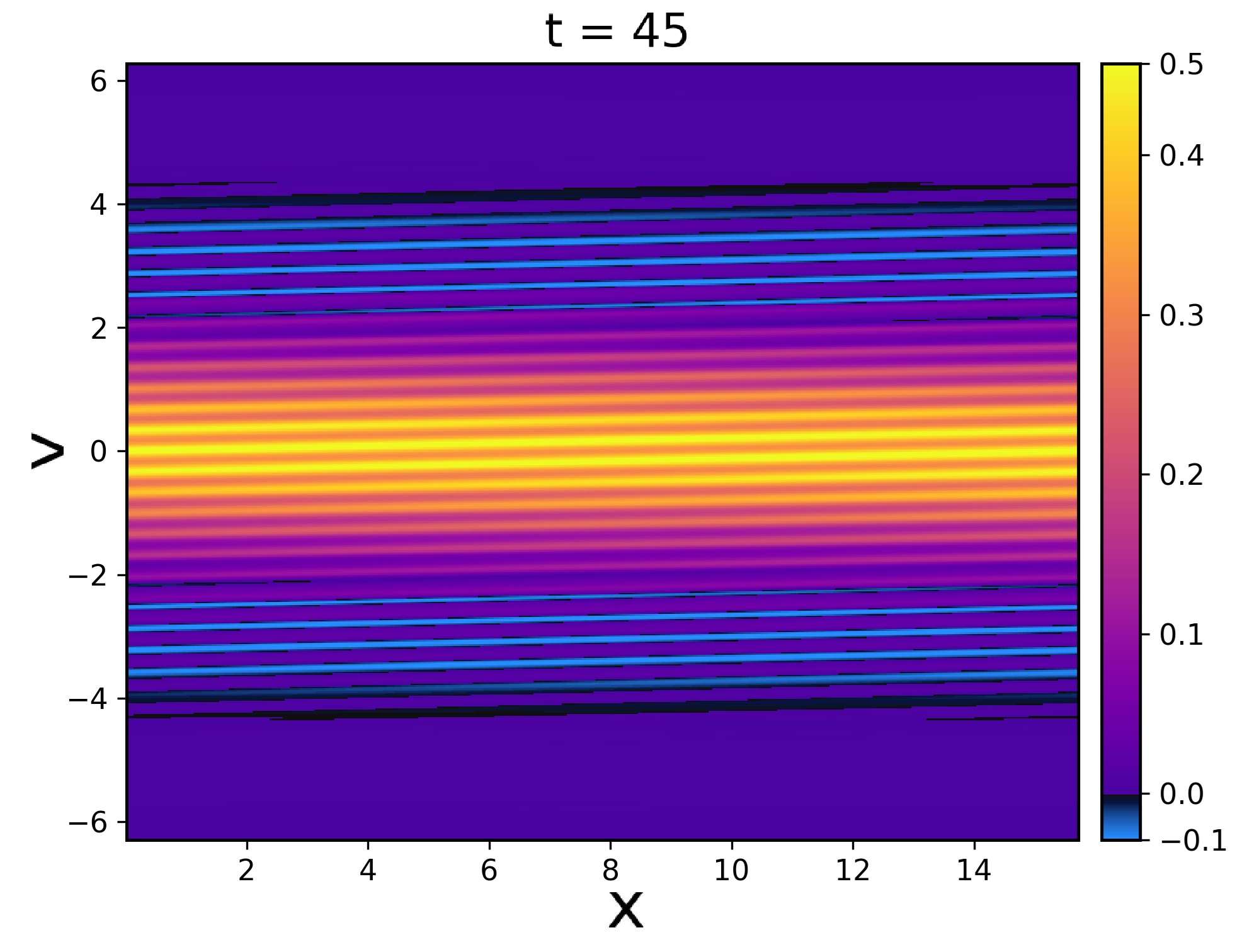}
        \caption{Alternative correction.}
        \label{fig:SLD_phase_globalH8}
    \end{subfigure}
    \hfill
    \begin{subfigure}{0.49\textwidth}
        \centering
        \includegraphics[width=\linewidth]{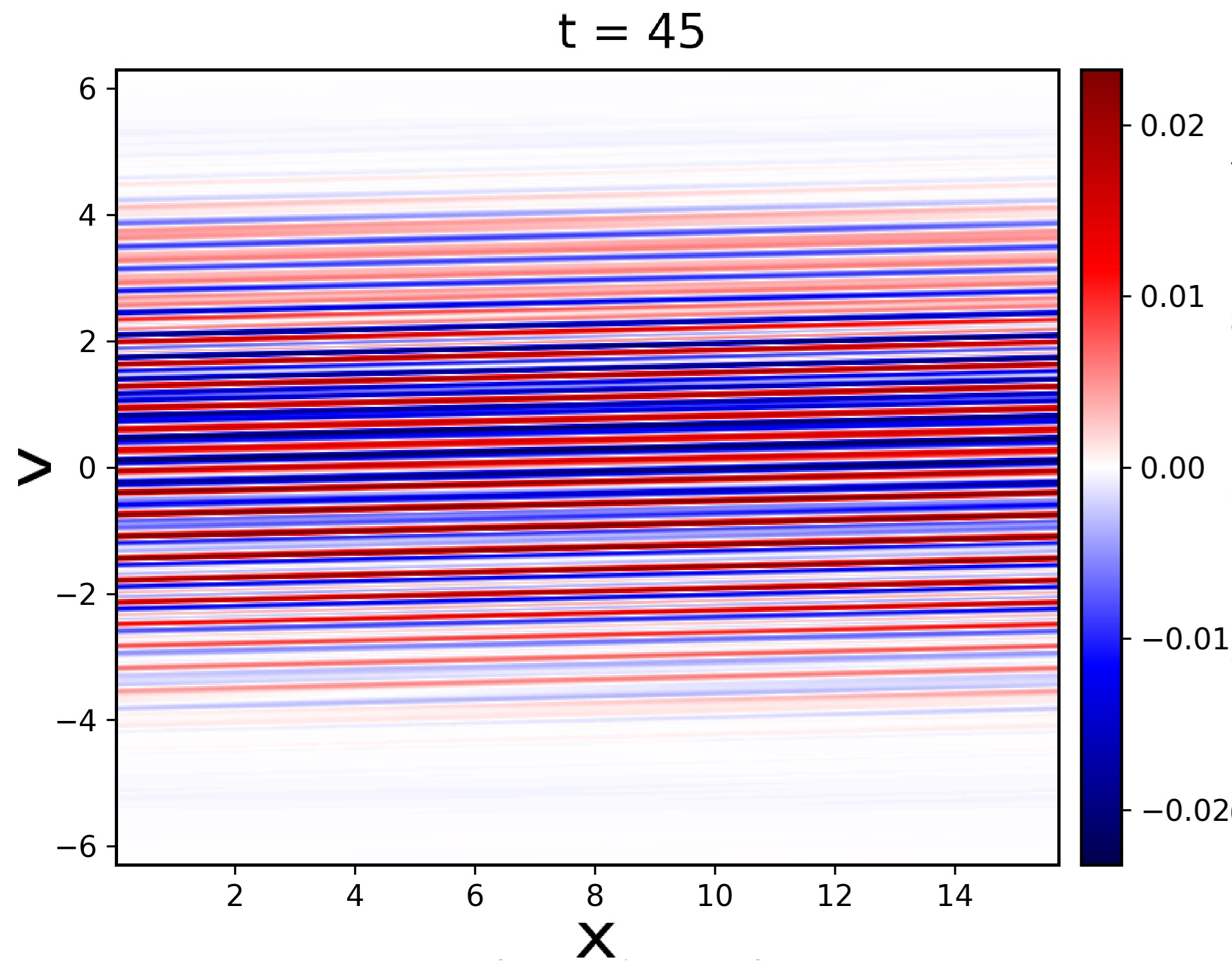}
        \caption{Difference.}
        \label{fig:SLD_phase_diffH8}
    \end{subfigure}

    \caption{
    Internal comparison for strong Landau damping at $t=45$ with $H=8$, $N_x = N_v=512$, CFL $=10$, $x\in[0,\frac{2\pi}{0.4}]$, and $v\in[-2\pi, 2\pi]$. \textbf{(a)} shows the phase-space distribution $f(x,v,t)$ computed using the globally conservative formulation in \cite{christlieb2026structurepreservingadaptiverankapproachhighdimensional}. \textbf{(b)} shows the pointwise difference between this internal reference and the present density-momentum correction. The difference is small relative to the scale of $f$ and is concentrated along the banded phase-space structures. This comparison is used only as an internal consistency check.}
    \label{fig:SLD_phase2}
\end{figure}

Fig. \ref{fig:SLD_diagnostics} reports the electric energy, adaptive rank, and conservation diagnostics. The electric energy shows the expected damping and nonlinear oscillatory behavior. The adaptive rank increases during the initial transient and then remains bounded, indicating that the low-rank representation adapts to the solution complexity without uncontrolled rank growth.

The rank behavior in the strong Landau damping test is slightly less monotone than in the two-stream and bump-on-tail tests. In those two benchmarks, stronger phase-space deformation dominates the rank behavior, and increasing $H$ gives smoother dynamics and lower adaptive ranks. In Fig. \ref{fig:SLD_diagnostics}~(b), however, the larger-$H$ case maintains a mildly higher rank. We do not interpret this as contradicting the regularizing role of $H$. The SLD solution remains relatively low-rank, so the measured rank is more sensitive to secondary effects such as the conservative correction and ACA-SVD recompression. In particular the Fermi-Dirac parameter
\[
\alpha = \frac{1}{\pi H\lambda_D n_0}
\]
decreases as $H$ increases, so the correction moves farther from the Maxwell-Boltzmann limit and may be less separable after coupling with $\rho(x), \; u(x), \; \mathcal T(x)$. Thus, for this low-rank benchmark, the adaptive rank reflects the combined influence of $H$, the conservative Fermi-Dirac correction, and the truncation tolerance.

\begin{figure}[htbp]
    \centering

    \begin{subfigure}{0.42\textwidth}
        \centering
        \includegraphics[width=\linewidth]{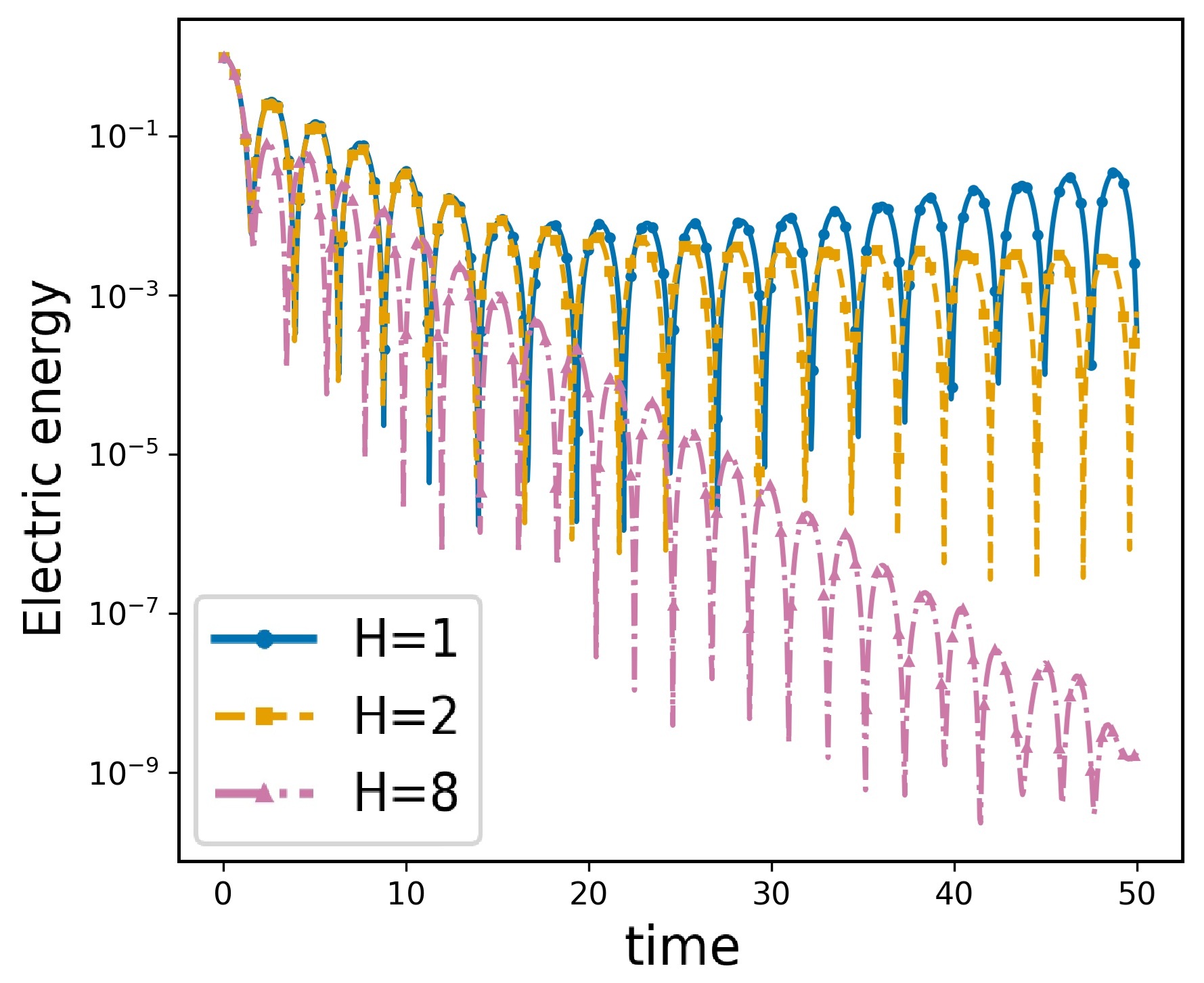}
        \caption{}
        \label{fig:SLD_electric_energy}
    \end{subfigure}
    \hfill
    \begin{subfigure}{0.42\textwidth}
        \centering
        \includegraphics[width=\linewidth]{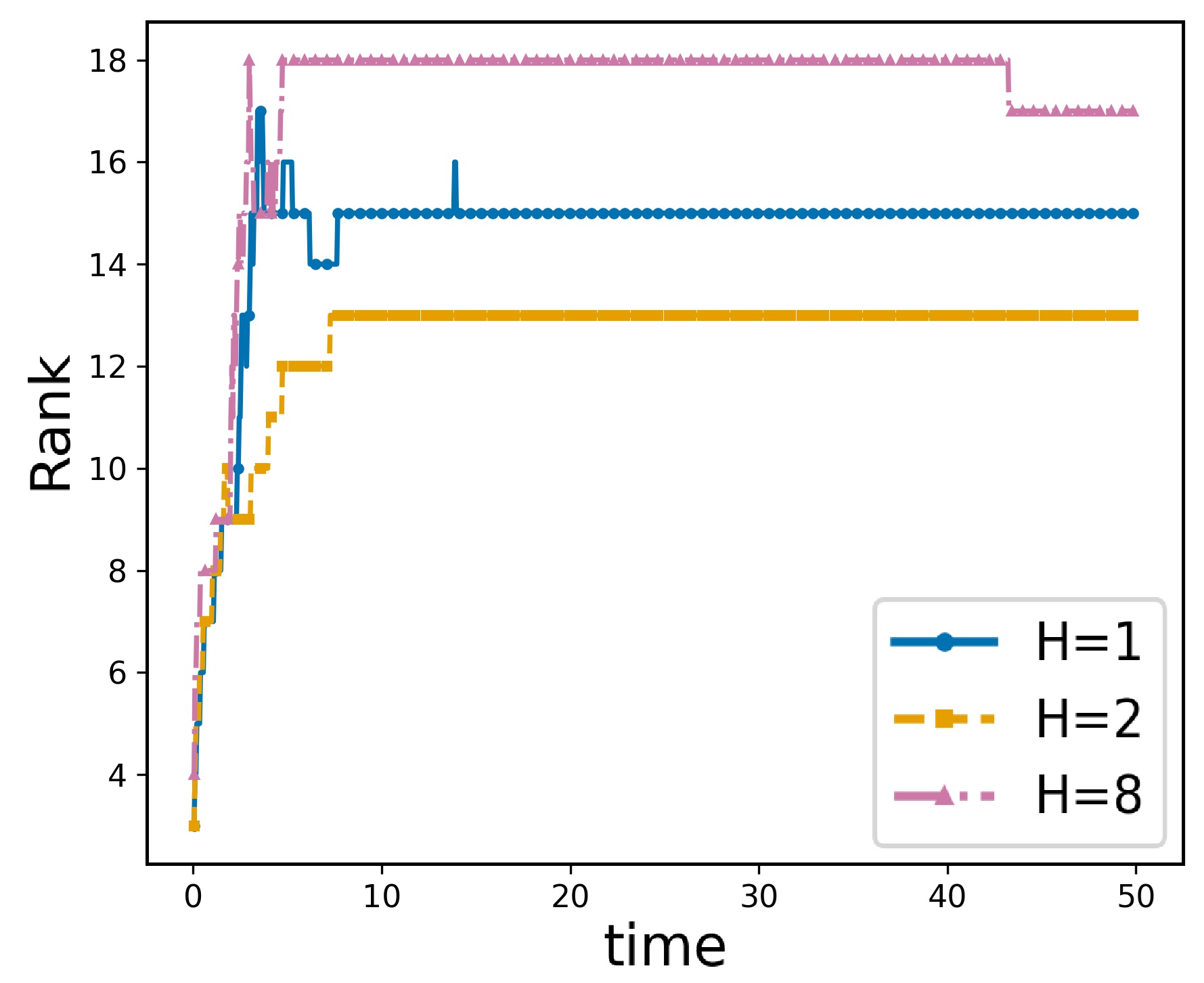}
        \caption{}
        \label{fig:SLD_rank_used}
    \end{subfigure}

    \vspace{1em} 

    \begin{subfigure}{0.42\textwidth}
        \centering
        \includegraphics[width=\linewidth]{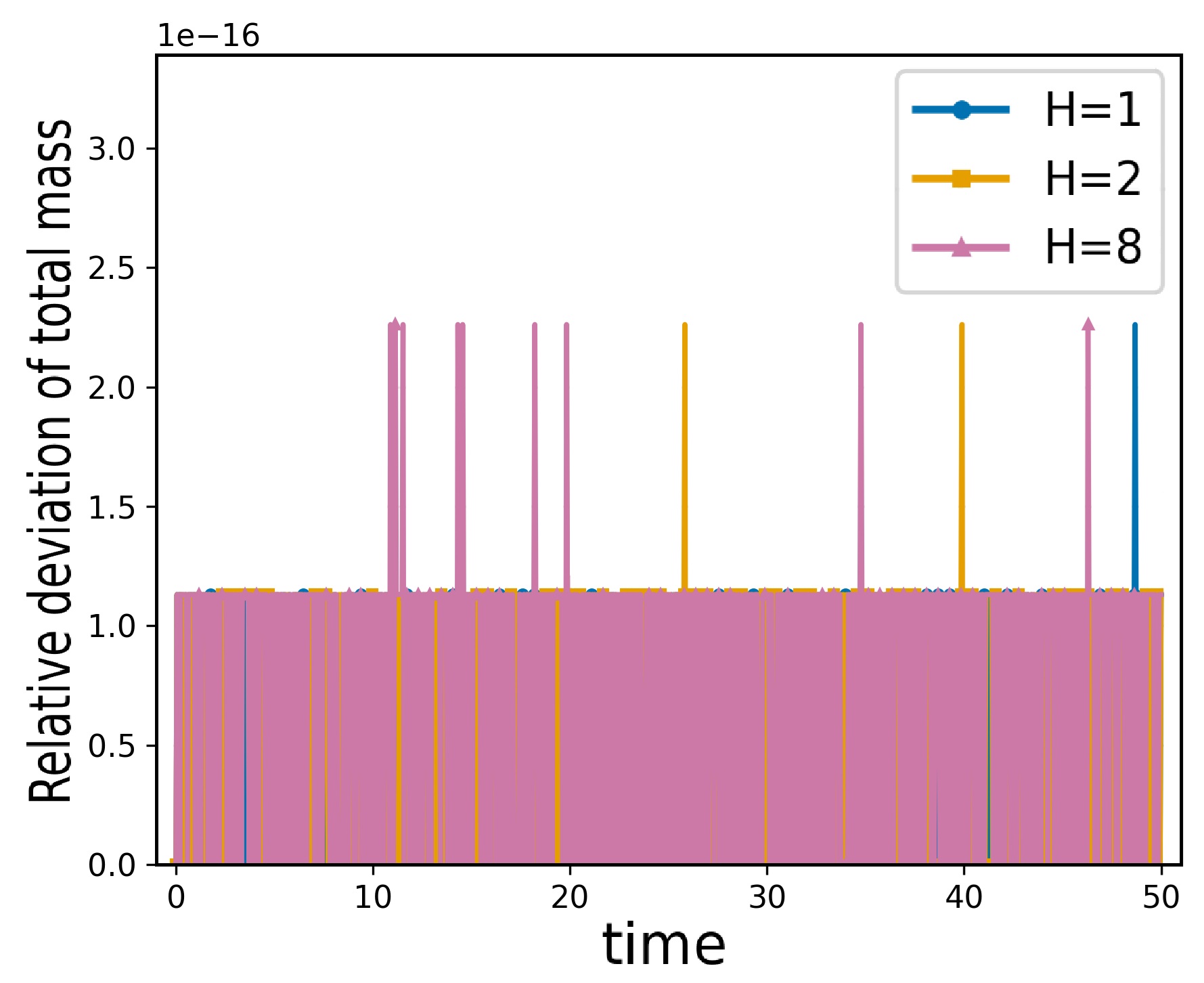}
        \caption{}
        \label{fig:SLD_total_mass}
    \end{subfigure}
    \hfill
    \begin{subfigure}{0.42\textwidth}
        \centering
        \includegraphics[width=\linewidth]{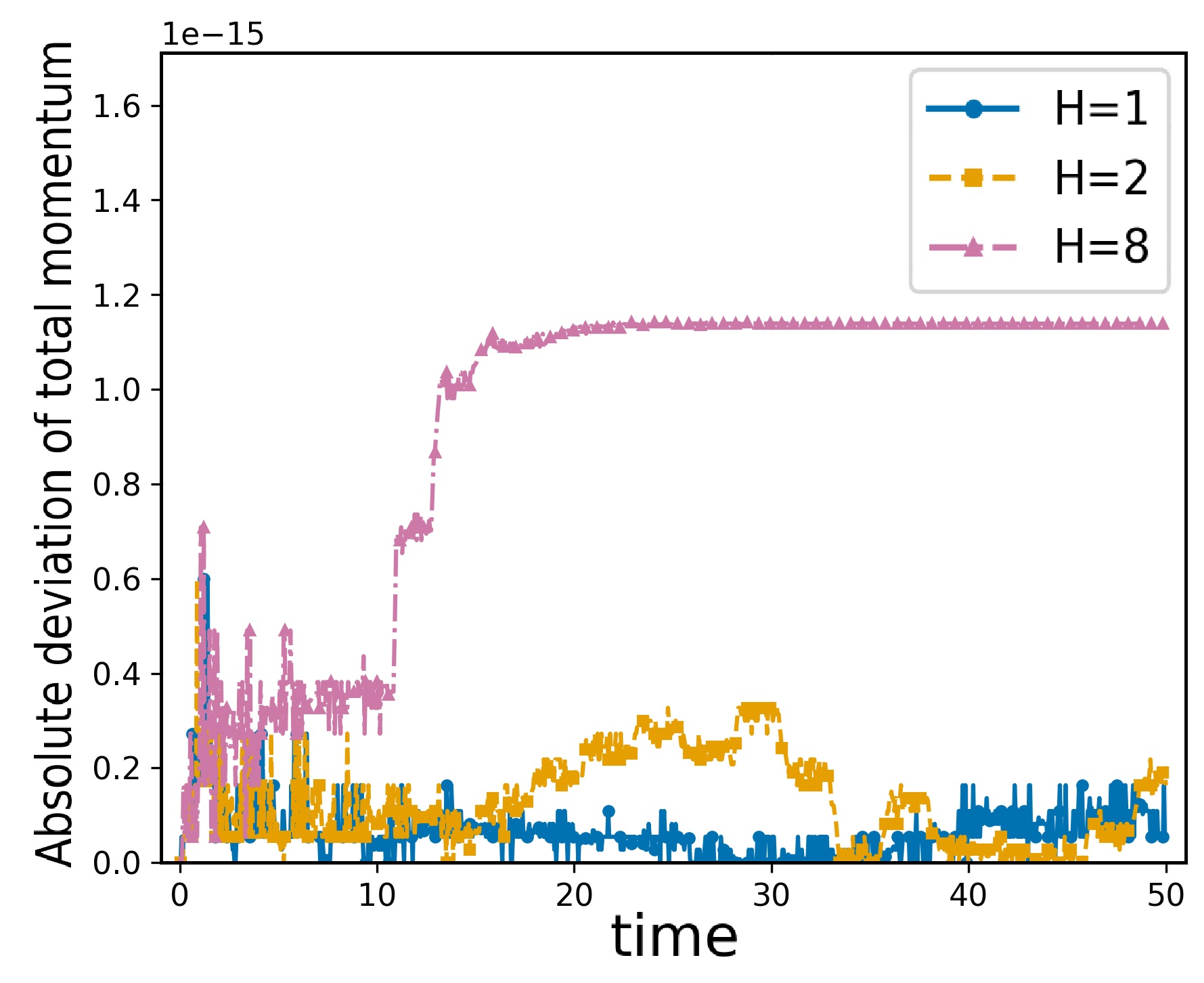}
        \caption{}
        \label{fig:SLD_total_momentum}
    \end{subfigure}

    \vspace{1em} 

    \begin{subfigure}{0.42\textwidth}
        \centering
        \includegraphics[width=\linewidth]{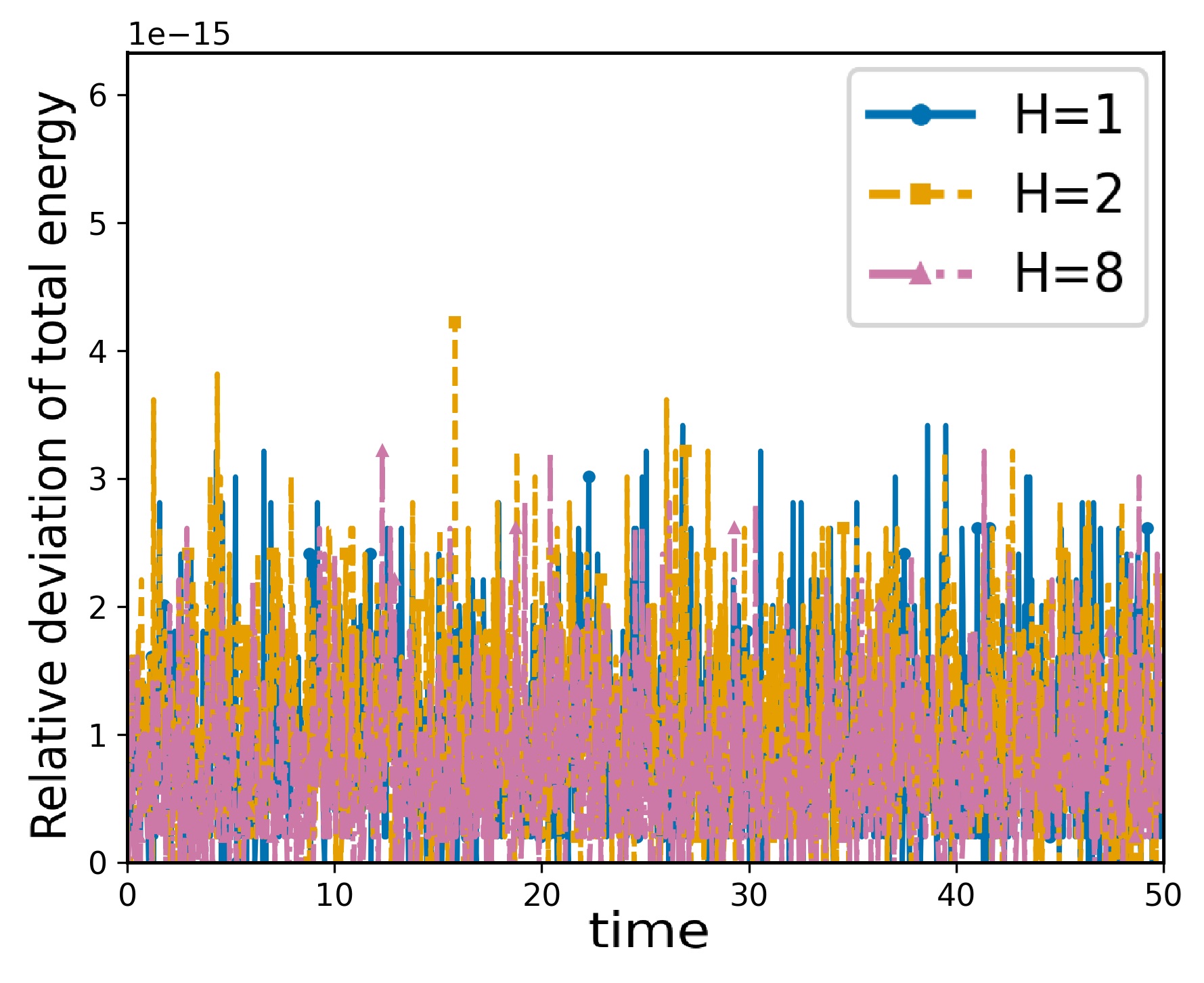}
        \caption{}
        \label{fig:SLD_total_energy}
    \end{subfigure}

    \caption{
Diagnostics for strong Landau damping with \(N_x=N_v=512\), CFL \(=10\), \(x\in[0,2\pi/0.4]\), \(v\in[-2\pi,2\pi]\), and \(H=1,2,8\). \textbf{(a)} shows the electric-field energy \(U(t)=\frac12\Delta x\sum_i E_i(t)^2\). \textbf{(b)} shows the adaptive rank retained after ACA-SVD recompression. \textbf{(c)}--\textbf{(e)} show the global conservation diagnostics computed from the final corrected low-rank state: relative mass deviation \(|M(t)-M(0)|/|M(0)|\), absolute deviation of total momentum \(|P(t)-P(0)|\), and relative total-energy deviation \(|\mathcal E_{tot}(t)-\mathcal E_{tot}(0)|/|\mathcal E_{tot}(0)|\), where \(\mathcal E_{tot}(t)=K(t)+U(t)\). Unlike the two-stream and bump-on-tail diagnostics, the $H = 8$ Landau damping run maintains a mildly higher adaptive rank, indicating that in this relatively low-rank benchmark the retained rank reflects the combined effect of $H$, the Fermi-Dirac correction, and ACA-SVD recompression.
}\label{fig:SLD_diagnostics}
\end{figure}

The conservation diagnostics again show that the proposed method preserves the macroscopic invariants to machine precision. This confirms that the Fermi-Dirac conservative correction is robust not only to instability-driven phase-space deformation, but also for damping-dominated kinetic dynamics.

\subsection{Bump-on-tail instability}

We simulate the bump-on-tail instability with the initial condition
\begin{equation}
    f(x,v,t=0)= \left( 1+ \alpha \; \cos(kx)\right) \left(n_p \exp \left(-\frac{v^2}{2}\right) + n_b \exp \left(-\frac{(v-u)^2}{2v_t}\right) \right)
\end{equation}
where $\alpha = 0.04,\;k=0.3,\;n_p=\frac{9}{10\sqrt{2\pi}},\;  n_b = \frac{2}{10\sqrt{2\pi}}, u=4.5, \; v_t= 0.5$ following \cite{guo2024local}

In Fig.~\ref{fig:BOT_phase}, the lower central band corresponds to the background population, while the separated upper bands show the beam population. For $H = 1$, the beam and background structures are more sharply deformed; for $H = 2$, the deformation remains visible but is smoother; for $H = 8$, the beam bands are smoother and less filamentary.

\begin{figure}[htbp]
    \centering

    \begin{subfigure}{0.49\textwidth}
        \centering
        \includegraphics[width=\linewidth]{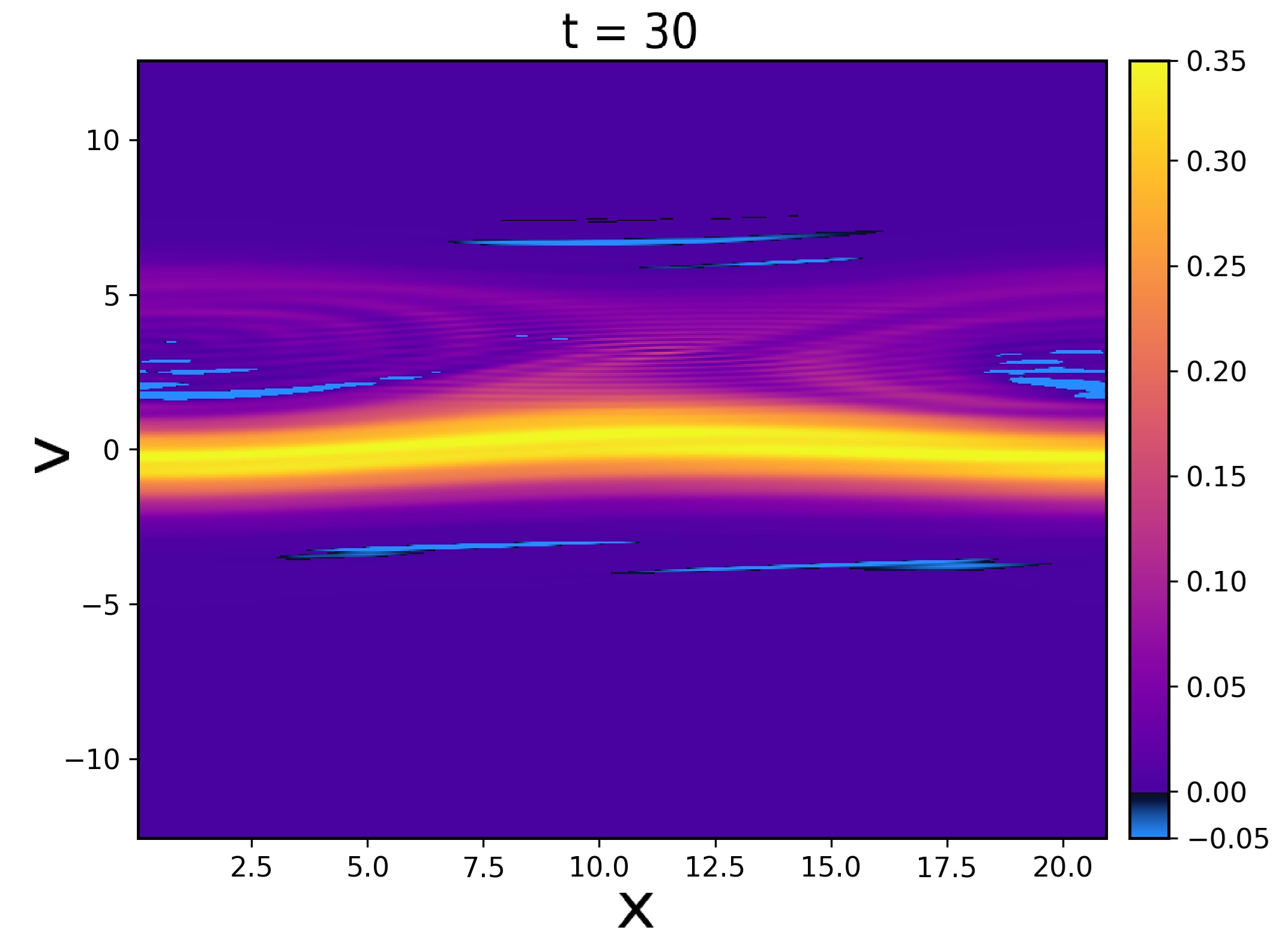}
        \caption{H=1, Low rank}
        \label{fig:BOT_phase_H1}
    \end{subfigure}
    \hfill
    \begin{subfigure}{0.49\textwidth}
        \centering
        \includegraphics[width=\linewidth]{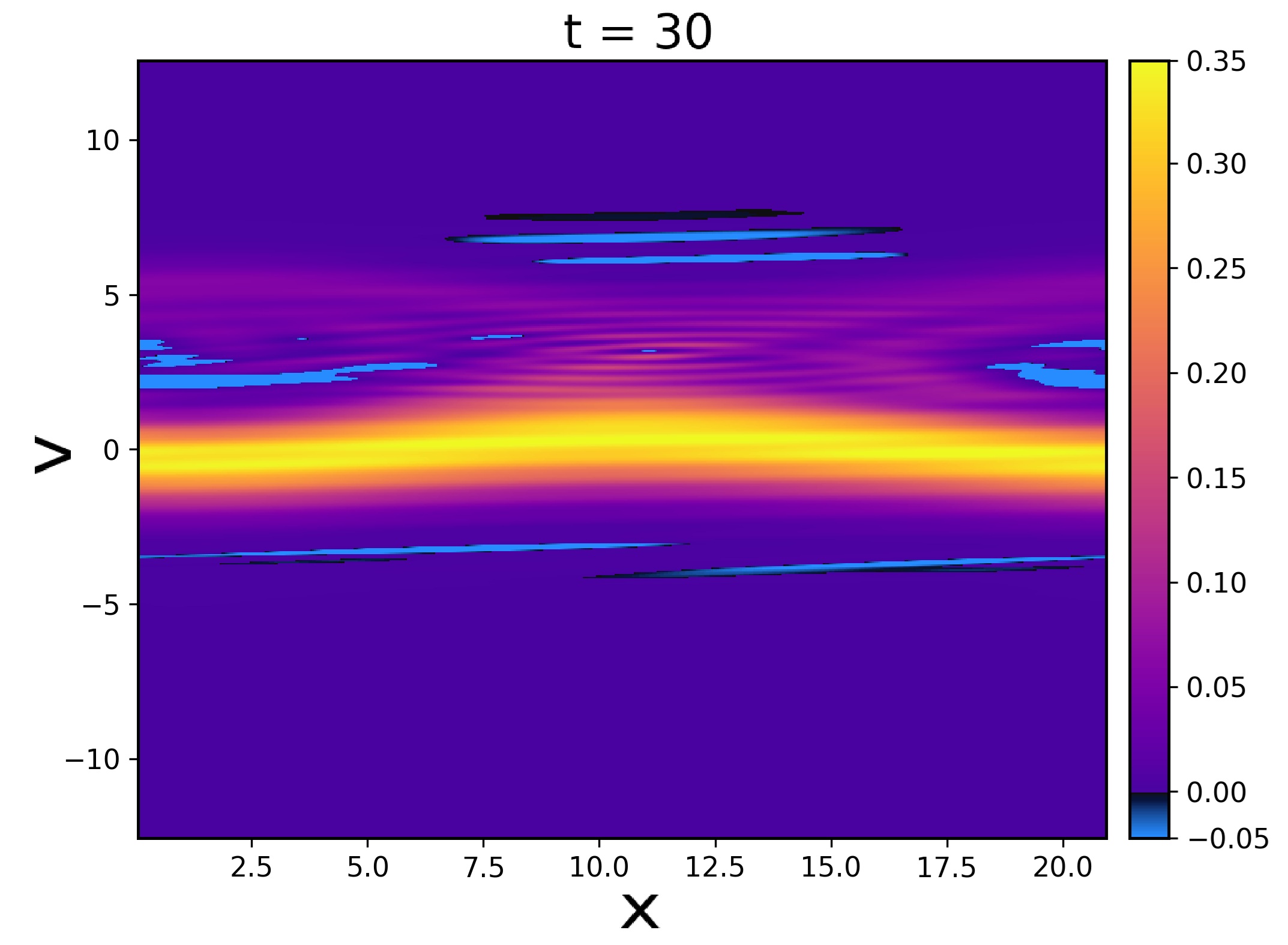}
        \caption{H=2, Low rank}
        \label{fig:BOT_phase_H2}
    \end{subfigure}

    \vspace{1em} 

    \begin{subfigure}{0.49\textwidth}
        \centering
        \includegraphics[width=\linewidth]{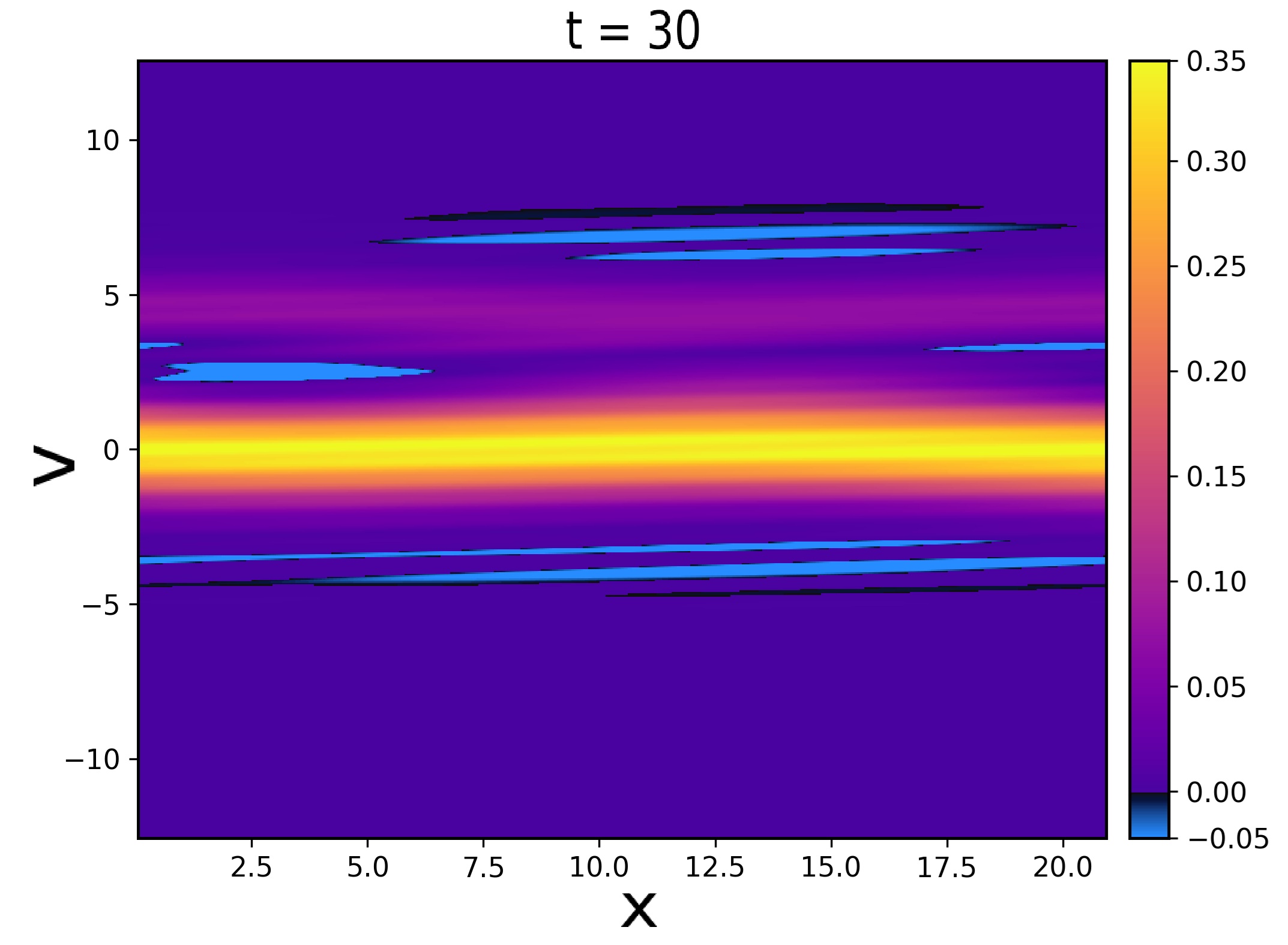}
        \caption{H=8, Low rank}
        \label{fig:BOT_phase_H8}
    \end{subfigure}

    \caption{
Phase-space distribution \(f(x,v,t)\) for the bump-on-tail instability at \(t=30\), with \(N_x=N_v=512\), CFL \(=10\), \(x\in[0,\frac{2\pi}{0.3}]\), and \(v\in[-4\pi, 4\pi]\). Panels (a)--(c) show the conservative adaptive rank solution for \(H=1\), \(H=2\), and \(H=8\), respectively. The beam and background structures are sharpest for $H = 1$ and become progressively smoother as $H$ increases, showing that the method captures the beam-driven instability while reflecting stronger quantum regularization at larger $H$.
}
    \label{fig:BOT_phase}
\end{figure}

The diagnostic results in Fig.~\ref{fig:BOT_total_energy} shows the electric energy growth and saturation for the beam-driven instability. Fig.~\ref{fig:BOT_rank_used} shows that the adaptive rank grows substantially during nonlinear beam evolution, reaching the largest values for $H=1$ but remaining far below the full phase-space dimension. Figs.~\ref{fig:BOT_total_mass}-\ref{fig:BOT_total_energy} show that mass, momentum, and total energy deviations remain near machine precision. Thus, the method handles a more nonequilibrium, multi-population test without losing the specified invariants.

\begin{figure}[htbp]
    \centering

    \begin{subfigure}{0.42\textwidth}
        \centering
        \includegraphics[width=\linewidth]{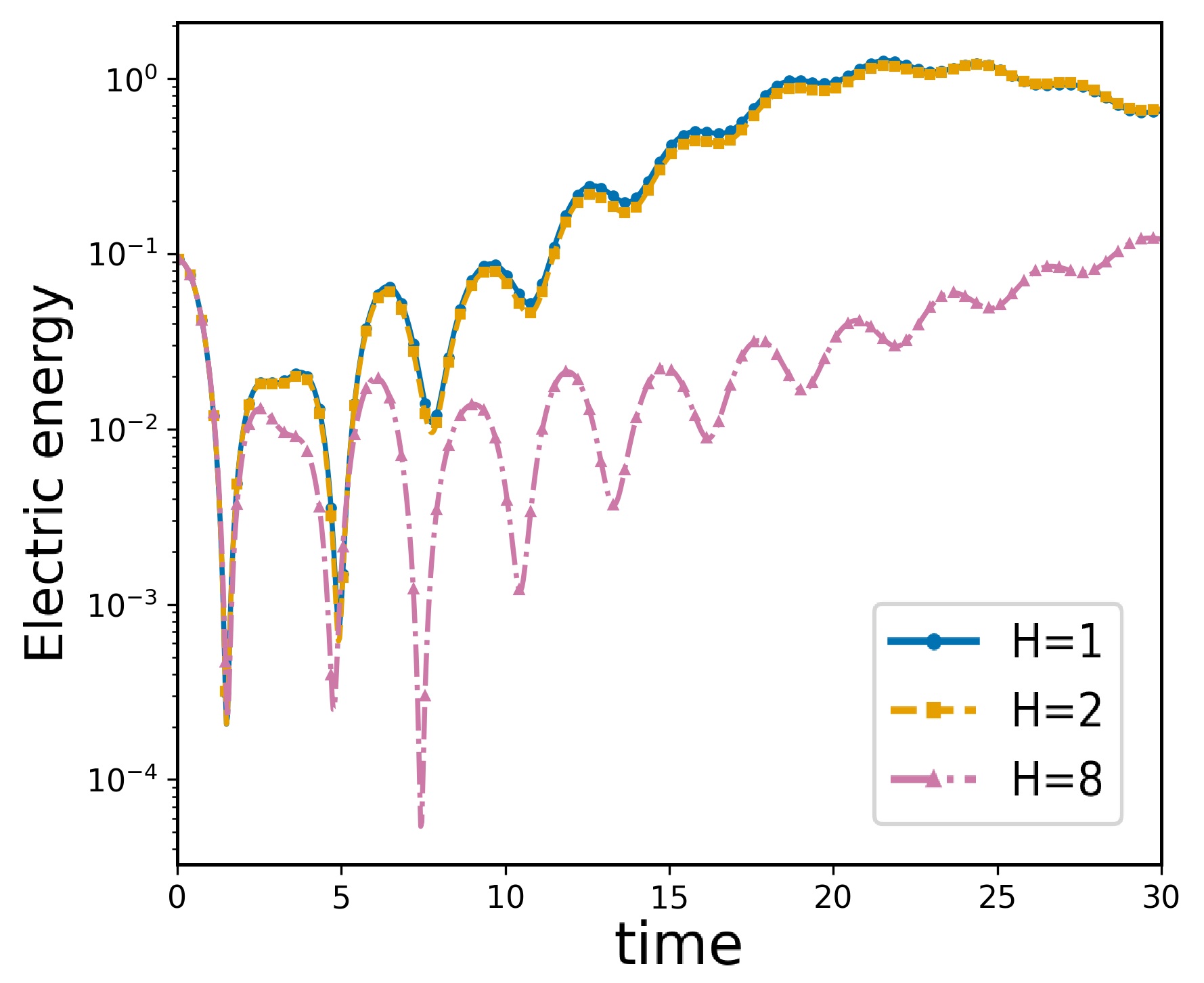}
        \caption{}
        \label{fig:BOT_electric_energy}
    \end{subfigure}
    \hfill
    \begin{subfigure}{0.42\textwidth}
        \centering
        \includegraphics[width=\linewidth]{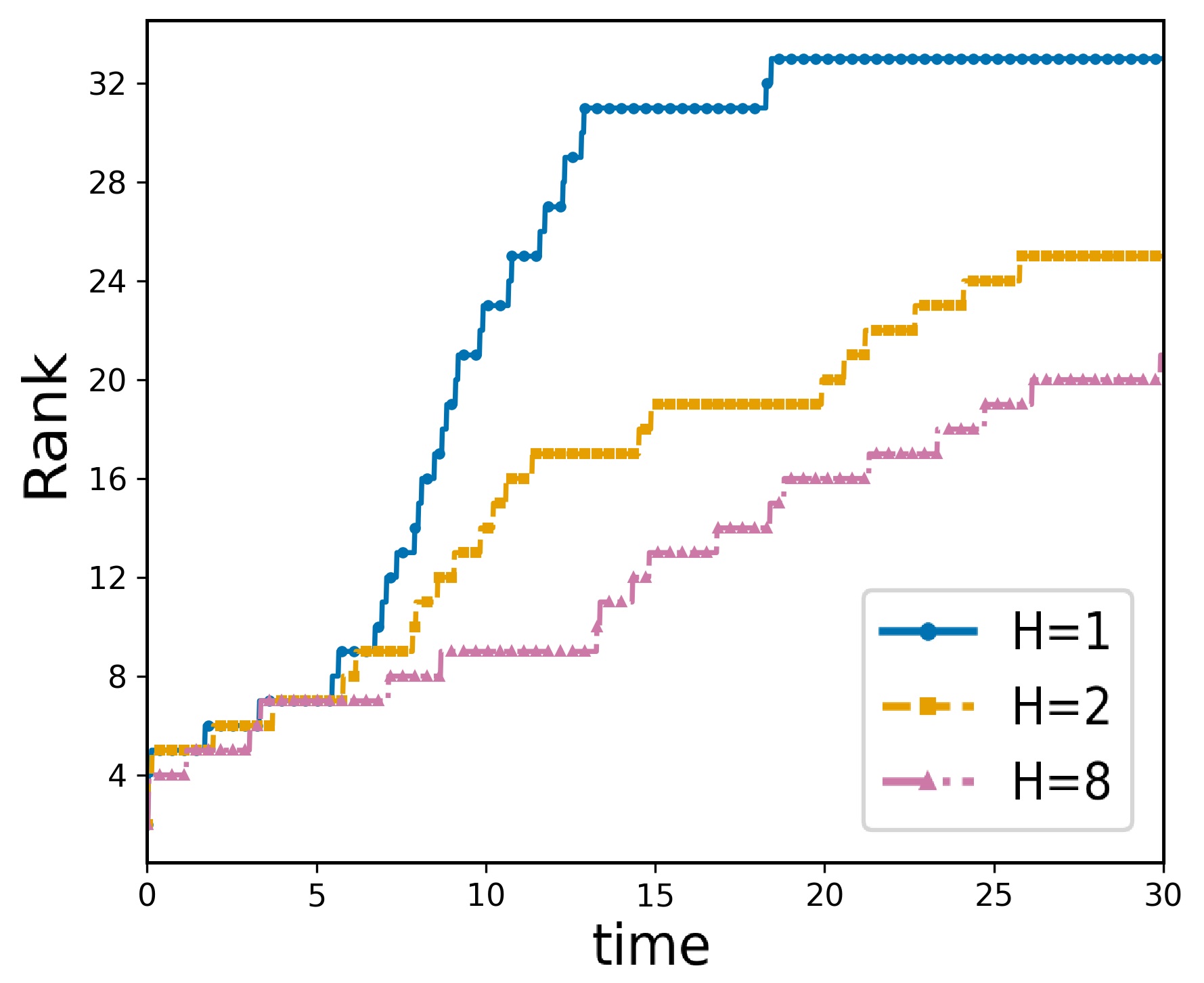}
        \caption{}
        \label{fig:BOT_rank_used}
    \end{subfigure}

    \vspace{1em} 

    \begin{subfigure}{0.42\textwidth}
        \centering
        \includegraphics[width=\linewidth]{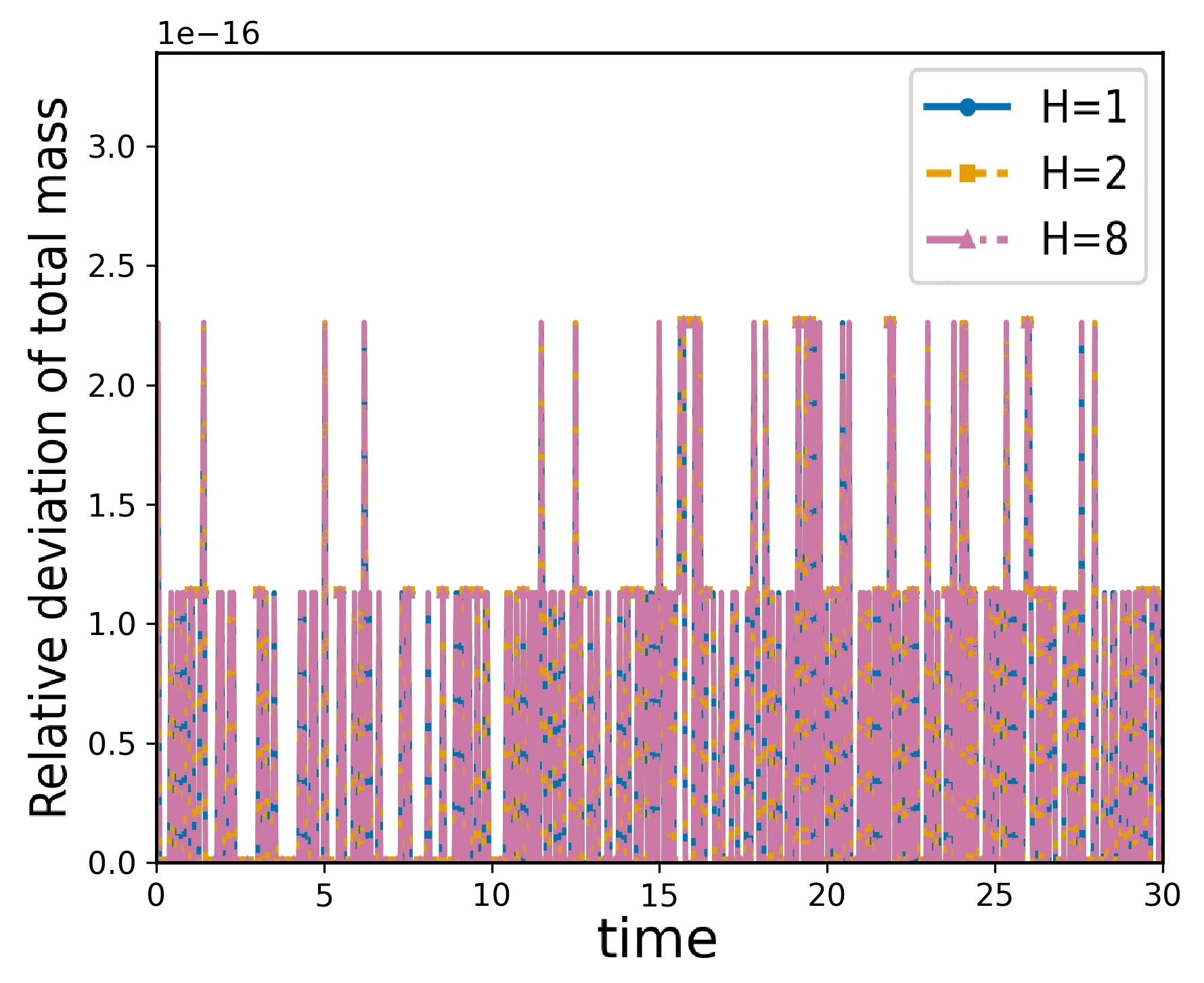}
        \caption{}
        \label{fig:BOT_total_mass}
    \end{subfigure}
    \hfill
    \begin{subfigure}{0.42\textwidth}
        \centering
        \includegraphics[width=\linewidth]{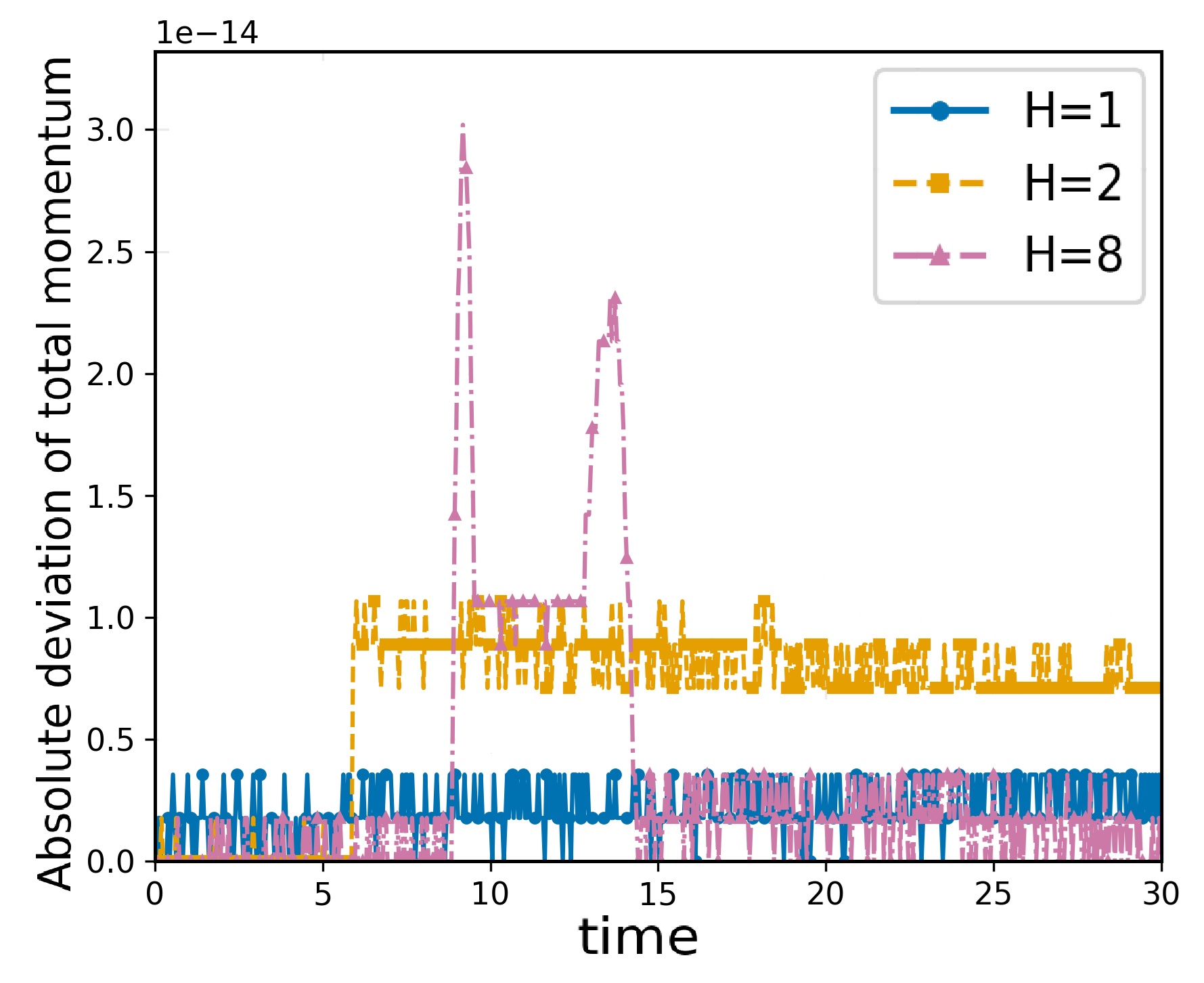}
        \caption{}
        \label{fig:BOT_total_momentum}
    \end{subfigure}

    \vspace{1em} 

    \begin{subfigure}{0.42\textwidth}
        \centering
        \includegraphics[width=\linewidth]{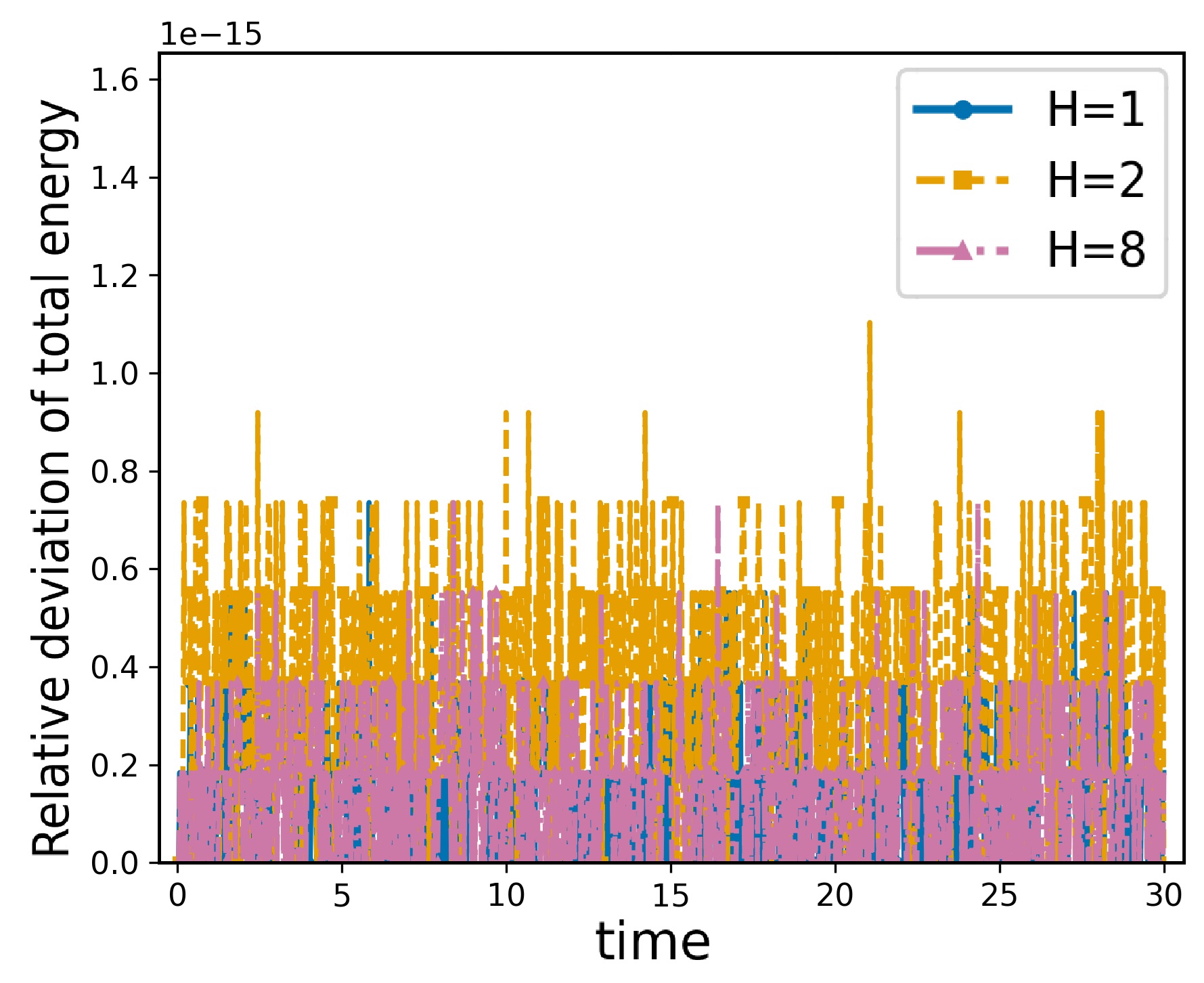}
        \caption{}
        \label{fig:BOT_total_energy}
    \end{subfigure}

    \caption{
Diagnostics for the bump-on-tail instability with \(N_x=N_v=512\), CFL \(=10\), final time \(T=30\), \(x\in[0, \frac{2\pi}{0.3}]\), \(v\in[-4\pi, 4\pi]\), and \(H=1,2,8\). \textbf{(a)} shows the electric-field energy \(U(t)=\frac12\Delta x\sum_i E_i(t)^2\). \textbf{(b)} shows the adaptive rank retained after ACA-SVD recompression. \textbf{(c)}--\textbf{(e)} show the global conservation diagnostics computed from the final corrected low-rank state: relative mass deviation \(|M(t)-M(0)|/|M(0)|\), absolute deviation of total momentum \(|P(t)-P(0)|\), and relative total-energy deviation \(|\mathcal E_{tot}(t)-\mathcal E_{tot}(0)|/|\mathcal E_{tot}(0)|\), where \(\mathcal E_{tot}(t)=K(t)+U(t)\). The diagnostic panels show electric energy growth and rank growth during the beam instability, while the mass, momentum, and total-energy errors remain at near machine precision, so the higher rank demand does not compromise the targeted conservation properties.
}\label{fig:BOT_diagnostics}
\end{figure}

\section{Conclusion}\label{sec:conclusion}

We have developed a conservative adaptive rank method for the 1D1V periodic Wigner-Poisson system. The method combines a sampling-based low-rank Wigner-Poisson update with a conservative macroscopic correction. The updated density and momentum are obtained from a conservative moment solve, the update is transferred to the kinetic solution through a Fermi-Dirac-type reconstruction, and the total energy is enforced by a global quadratic moment correction.

Numerical experiments for the two-stream instability, strong Landau damping, and bump-on-tail instability show that the method captures the expected Wigner-Poisson dynamics for several values of the quantum parameter $H$. In the reported 1D1V periodic tests, the correction preserves the global discrete invariants near machine precision while maintaining bounded adaptive ranks. The spectral and timing diagnostics further indicate that the conservative correction is compatible with ACA-SVD compression and does not eliminate the computational advantage of the adaptive representation.

This work provides a conservative low-rank framework for Wigner-Poisson dynamics in the 1D1V periodic setting. Future work will address higher-dimensional scalable implementations, sharper discrete moment matching, and extensions to Wigner-Poisson-BGK models.

\section*{Acknowledgements}
The authors acknowledge support from AFOSR grants FA9550-24-1-0254 and DOE grant DE-SC0023164. AJC is also supported by DOE/NNSA grant DE-NA0004265 and ONR grant N00014-24-1-2242, and the MSU Institute for Cyber-Enabled Research for their computer resources. The authors acknowledge assistance from ChatGPT with grammar editing.

\appendix
\section{Proof of Theorem \ref{thm:fluid model}}\label{apdix:thm 2.1}
First, by the definitions of the macroscopic moments, we have
\begin{equation}
    \int v f\,dv = \rho u,
\end{equation}
and
\begin{equation}
    \int v^2 f\,dv
    =
    \int \bigl((v-u)+u\bigr)^2 f\,dv
    =
    P+\rho u^2.
\end{equation}
Moreover, since
\begin{equation}
    Q=\int (v-u)^3 f\,dv,
\end{equation}
we obtain
\begin{align}
    \int v^3 f\,dv
    &=
    \int \bigl((v-u)+u\bigr)^3 f\,dv  \notag\\
    &=
    \int (v-u)^3 f\,dv
    +3u\int (v-u)^2 f\,dv
    +3u^2\int (v-u)f\,dv
    +u^3\int f\,dv \notag\\
    &=
    Q+3uP+\rho u^3,
\end{align}
where we used
\begin{equation}
    \int (v-u)f\,dv=0.
\end{equation}

We now derive the moment equations. Integrating the Wigner equation against
$1$ in velocity and using \eqref{eqn:WignerIdentity1}, we find
\begin{align}
    0 = 
    \int W\,dv =
    \int \left(f_t+v f_x\right)\,dv =
    \rho_t+(\rho u)_x.
\end{align}
Hence,
\begin{equation}
    \rho_t+(\rho u)_x=0.
\end{equation}

Next, multiplying the Wigner equation by $v$, integrating in velocity, and
using \eqref{eqn:WignerIdentity2}, we obtain
\begin{align}
    -\rho \Phi_x =
    \int vW\,dv  =
    \int \left(v f_t+v^2 f_x\right)\,dv =
    (\rho u)_t+\left(P+\rho u^2\right)_x.
\end{align}
Therefore,
\begin{equation}
    (\rho u)_t+\left(P+\rho u^2\right)_x
    =
    -\rho \Phi_x.
\end{equation}

Similarly, multiplying the Wigner equation by $v^2$, integrating in velocity,
and using \eqref{eqn:WignerIdentity3}, gives
\begin{align}
    -2\rho u\Phi_x =
    \int v^2 W\,dv =
    \int \left(v^2 f_t+v^3 f_x\right)\,dv =
    \left(P+\rho u^2\right)_t
    +
    \left(\rho u^3+3uP+Q\right)_x.
\end{align}
Thus,
\begin{equation}
    \left(P+\rho u^2\right)_t
    +
    \left(\rho u^3+3uP+Q\right)_x
    =
    -2\rho u\Phi_x.
\end{equation}

We next derive the corresponding conservation laws. Integrating the continuity equation with respect to $x$ yields
\begin{align}
    \frac{d}{dt}\int \rho\,dx =
    -\int (\rho u)_x\,dx =
    0.
\end{align}
Therefore, the total mass is conserved:
\begin{equation}
    \frac{d}{dt}\int \rho\,dx=0.
\end{equation}

For the momentum, integrating the momentum equation in $x$ gives
\begin{equation}
    \frac{d}{dt}\int \rho u\,dx
    =
    -\int \rho\Phi_x\,dx.
\end{equation}
Using the Poisson equation
\begin{equation}
    \Phi_{xx}=1-\rho,
\end{equation}
we have $\rho=1-\Phi_{xx}$. Hence,
\begin{align}
    -\int \rho\Phi_x\,dx=
    -\int (1-\Phi_{xx})\Phi_x\,dx =
    -\int \Phi_x\,dx
    +
    \int \Phi_{xx}\Phi_x\,dx =
    -\int \Phi_x\,dx
    +
    \frac12\int \bigl((\Phi_x)^2\bigr)_x\,dx =
    0.
\end{align}
Consequently,
\begin{equation}
    \frac{d}{dt}\int \rho u\,dx=0,
\end{equation}
and the total momentum is conserved.

It remains to prove conservation of energy. Since
\begin{equation}
    P+\rho u^2=\int v^2 f\,dv,
\end{equation}
the second-moment equation implies
\begin{align}
    \frac{d}{dt}
    \left(
        \frac12\iint v^2 f\,dv\,dx
    \right)
    &=
    \frac12\int
    \left(P+\rho u^2\right)_t\,dx =
    -\int \rho u\Phi_x\,dx.
\end{align}

On the other hand, differentiating the Poisson equation in time gives
\begin{equation}
    \rho_t=-\Phi_{xxt}.
\end{equation}
Therefore,
\begin{align}
    \frac{d}{dt}
    \left(
        \frac12\int (\Phi_x)^2\,dx
    \right)
    &=
    \int \Phi_x\Phi_{xt}\,dx =
    -\int \Phi\Phi_{xxt}\,dx =
    \int \Phi\rho_t\,dx.
\end{align}
Using the continuity equation, $\rho_t=-(\rho u)_x$, we obtain
\begin{align}
    \int \Phi\rho_t\,dx
    &=
    -\int \Phi(\rho u)_x\,dx =
    \int \rho u\Phi_x\,dx.
\end{align}
Thus,
\begin{equation}
    \frac{d}{dt}
    \left(
        \frac12\int (\Phi_x)^2\,dx
    \right)
    =
    \int \rho u\Phi_x\,dx.
\end{equation}

Adding the kinetic and field energy identities gives
\begin{equation}
    \frac{d}{dt}
    \left[
        \frac12\iint v^2 f\,dv\,dx
        +
        \frac12\int (\Phi_x)^2\,dx
    \right]
    =
    0.
\end{equation}
Hence, the total energy
\begin{equation}
    \mathcal{E}(t)
    =
    \frac12\iint v^2 f\,dv\,dx
    +
    \frac12\int (\Phi_x)^2\,dx
\end{equation}
is conserved. 

\section{Proof of Lemma \ref{lem:Fermi consistency}}\label{apdix:lem fermi}
\begin{proof}
Let \(w=v-u\), so that \(dv=dw\). Then
\begin{equation}
    f^{FD}=\frac{\alpha\rho}{C}\frac{1}{A\exp\left(\frac{w^2}{2\hat {\mathcal{T}}}\right)+\rho}.
\end{equation}
Using \(s=w/\sqrt{2\hat {\mathcal{T}}}\), so that \(dw=\sqrt{2\hat {\mathcal{T}}}\,ds\), we obtain
\begin{align}
    \int f^{FD}\,dv
    &=\frac{\alpha\rho}{C}\int \frac{dw}{A\exp\left(\frac{w^2}{2\hat {\mathcal{T}}}\right)+\rho}
    =\frac{\alpha\rho\sqrt{2\hat {\mathcal{T}}}}{C}\int \frac{ds}{A\exp(s^2)+\rho}
    =\frac{\alpha\rho\sqrt{2\hat {\mathcal{T}}}}{C}I_0=\rho,
\end{align}
by the definition of \(C\). Next,
\begin{align}
    \int v f^{FD}\,dv
    &=\int (w+u)f^{FD}\,dw
    =\int w f^{FD}\,dw+u\int f^{FD}\,dw
    =\rho u,
\end{align}
since \(f^{FD}\) is even in \(w\), and therefore \(w f^{FD}\) is odd. For the second moment,
\begin{align}
    \int \frac{1}{2}v^2f^{FD}\,dv
    &=\frac{1}{2}\int (w+u)^2f^{FD}\,dw
    =\frac{1}{2}\left(\int w^2f^{FD}\,dw+\rho u^2\right) \nonumber\\
    &=\frac{1}{2}\left(\frac{\alpha\rho}{C}\int \frac{w^2\,dw}{A\exp\left(\frac{w^2}{2\hat {\mathcal{T}}}\right)+\rho}+\rho u^2\right) \nonumber\\
    &=\frac{1}{2}\left(\frac{\alpha\rho}{C}2\hat {\mathcal{T}}\sqrt{2\hat {\mathcal{T}}}\int \frac{s^2\,ds}{A\exp(s^2)+\rho}+\rho u^2\right) \nonumber\\
    &=\frac{1}{2}\left(2\rho\hat {\mathcal{T}}\frac{I_1}{I_0}+\rho u^2\right)
    =\rho\left(\frac{1}{2}u^2+\hat {\mathcal{T}}\frac{I_1}{I_0}\right).
\end{align}
Thus \(e^{FD}=\hat {\mathcal{T}} I_1/I_0\). Finally, since \(Q(f)=\frac{1}{\tau}(f^{FD}-f)\), if \(f\) and \(f^{FD}\) have the same density and momentum, then
\begin{equation}
    \int Q(f)\,dv=\frac{\rho-\rho}{\tau}=0,
    \qquad
    \int vQ(f)\,dv=\frac{\rho u-\rho u}{\tau}=0.
\end{equation}
If \(\hat {\mathcal{T}}\) is chosen so that \(e^{FD}=e\), then
\begin{equation}
    \int \frac{1}{2}v^2Q(f)\,dv=\frac{1}{\tau}\left[\rho\left(\frac{1}{2}u^2+e^{FD}\right)-\rho\left(\frac{1}{2}u^2+e\right)\right]=0.
\end{equation}
This proves the result.
\end{proof}

\bibliographystyle{siam} 
\bibliography{ref2}

@article{einkemmer2018low,
  title={A low-rank projector-splitting integrator for the Vlasov--Poisson equation},
  author={Einkemmer, Lukas and Lubich, Christian},
  journal={SIAM Journal on Scientific Computing},
  volume={40},
  number={5},
  pages={B1330--B1360},
  year={2018},
  publisher={SIAM}
}

@article{koch2007dynamical,
  title={Dynamical low-rank approximation},
  author={Koch, Othmar and Lubich, Christian},
  journal={SIAM Journal on Matrix Analysis and Applications},
  volume={29},
  number={2},
  pages={434--454},
  year={2007},
  publisher={SIAM}
}

@article{sun2024hybrid,
  title={A Hybrid SBP-SAT/Fourier Pseudo-spectral Method for the Transient Wigner Equation Involving Inflow Boundary Conditions},
  author={Sun, Zhangpeng and Yao, Wenqi and Yu, Qiuping},
  journal={Journal of Scientific Computing},
  volume={100},
  number={2},
  pages={38},
  year={2024},
  publisher={Springer}
}

@article{GuoVlasovFlowMap2022,
    author = {Wei Guo and Jing-Mei Qiu},
    title = {A low rank tensor representation of linear transport and nonlinear {V}lasov solutions and their associated flow maps},
    journal = {Journal of Computational Physics},
    volume = {458},
    pages = {111089},
    year = {2022}
}

@article{guo2024local,
  title={A Local Macroscopic Conservative (LoMaC) low rank tensor method for the Vlasov dynamics},
  author={Guo, Wei and Qiu, Jing-Mei},
  journal={Journal of Scientific Computing},
  volume={101},
  number={3},
  pages={61},
  year={2024},
  publisher={Springer}
}

@article{guo2024conservative,
  title={A conservative low rank tensor method for the Vlasov dynamics},
  author={Guo, Wei and Qiu, Jing-Mei},
  journal={SIAM Journal on Scientific Computing},
  volume={46},
  number={1},
  pages={A232--A263},
  year={2024},
  publisher={SIAM}
}

@article{einkemmer2021mass,
  title={A mass, momentum, and energy conservative dynamical low-rank scheme for the Vlasov equation},
  author={Einkemmer, Lukas and Joseph, Ilon},
  journal={Journal of Computational Physics},
  volume={443},
  pages={110495},
  year={2021},
  publisher={Elsevier}
}

@article{wigner1932quantum,
  title={On the quantum correction for thermodynamic equilibrium},
  author={Wigner, Eugene},
  journal={Physical review},
  volume={40},
  number={5},
  pages={749},
  year={1932},
  publisher={APS}
}

@article{pathria2021,
  title={Statistical Mechanics Fourth Edition},
  author={Beale, Paul D and Pathria, RK},
  year={2021}
}

@book{landau1980,
  title={Statistical Physics: Volume 5},
  author={Landau, Lev Davidovich and Lifshitz, Evgenii Mikhailovich},
  volume={5},
  year={2013},
  publisher={Elsevier}
}

@article{ringhofer1990spectral,
  title={A spectral method for the numerical simulation of quantum tunneling phenomena},
  author={Ringhofer, Christian},
  journal={SIAM journal on numerical analysis},
  volume={27},
  number={1},
  pages={32--50},
  year={1990},
  publisher={SIAM}
}

@article{ringhofer1992spectral,
  title={A spectral collocation technique for the solution of the Wigner--Poisson problem},
  author={Ringhofer, Christian},
  journal={SIAM journal on numerical analysis},
  volume={29},
  number={3},
  pages={679--700},
  year={1992},
  publisher={SIAM}
}

@article{suh1991numerical,
  title={Numerical simulation of the quantum Liouville-Poisson system},
  author={Suh, Nam-Duk and Feix, Marl R and Bertrand, Pierre},
  journal={Journal of Computational Physics},
  volume={94},
  number={2},
  pages={403--418},
  year={1991},
  publisher={Elsevier}
}

@article{arnold1995operator,
  title={Operator splitting methods applied to spectral discretizations of quantum transport equations},
  author={Arnold, Anton and Ringhofer, Christian},
  journal={SIAM journal on numerical analysis},
  volume={32},
  number={6},
  pages={1876--1894},
  year={1995},
  publisher={SIAM}
}

@article{furtmaier2016semi,
  title={Semi-spectral method for the Wigner equation},
  author={Furtmaier, Oliver and Succi, Sauro and Mendoza, Miller},
  journal={Journal of Computational Physics},
  volume={305},
  pages={1015--1036},
  year={2016},
  publisher={Elsevier}
}

@article{chen2022higher,
  title={A higher-order accurate operator splitting spectral method for the Wigner--Poisson system},
  author={Chen, Zhenzhu and Jiang, Haiyan and Shao, Sihong},
  journal={Journal of Computational Electronics},
  volume={21},
  number={4},
  pages={756--770},
  year={2022},
  publisher={Springer}
}

@article{shao2011adaptive,
  title={Adaptive conservative cell average spectral element methods for transient Wigner equation in quantum transport},
  author={Shao, Sihong and Lu, Tiao and Cai, Wei},
  journal={Communications in Computational Physics},
  volume={9},
  number={3},
  pages={711--739},
  year={2011},
  publisher={Cambridge University Press}
}

@article{xiong2016advective,
  title={An advective-spectral-mixed method for time-dependent many-body Wigner simulations},
  author={Xiong, Yunfeng and Chen, Zhenzhu and Shao, Sihong},
  journal={SIAM Journal on Scientific Computing},
  volume={38},
  number={4},
  pages={B491--B520},
  year={2016},
  publisher={SIAM}
}

@article{dorda2015weno,
  title={A WENO-solver combined with adaptive momentum discretization for the Wigner transport equation and its application to resonant tunneling diodes},
  author={Dorda, Antonius and Sch{\"u}rrer, Ferdinand},
  journal={Journal of computational physics},
  volume={284},
  pages={95--116},
  year={2015},
  publisher={Elsevier}
}

@book{haas2011quantum,
  title={Quantum plasmas: An hydrodynamic approach},
  author={Haas, Fernando},
  volume={65},
  year={2011},
  publisher={Springer Science \& Business Media}
}

@article{lubich2014projector,
  title={A projector-splitting integrator for dynamical low-rank approximation},
  author={Lubich, Christian and Oseledets, Ivan V},
  journal={BIT Numerical Mathematics},
  volume={54},
  number={1},
  pages={171--188},
  year={2014},
  publisher={Springer}
}

@article{christlieb2025sampling,
  title={A Sampling-Based Adaptive Rank Approach to the Wigner-Poisson System},
  author={Christlieb, Andrew and Gong, Sining and Qiu, Jing-Mei and Zheng, Nanyi},
  journal={arXiv preprint arXiv:2506.21314},
  year={2025}
}

@article{bonilla2005wigner,
  title={Wigner--Poisson and nonlocal drift-diffusion model equations for semiconductor superlattices},
  author={Bonilla, LL and Escobedo, R},
  journal={Mathematical Models and Methods in Applied Sciences},
  volume={15},
  number={08},
  pages={1253--1272},
  year={2005},
  publisher={World Scientific}
}

@article{crouseilles2014asymptotic,
  title={Asymptotic preserving schemes for the Wigner--Poisson--BGK equations in the diffusion limit},
  author={Crouseilles, Nicolas and Manfredi, Giovanni},
  journal={Computer Physics Communications},
  volume={185},
  number={2},
  pages={448--458},
  year={2014},
  publisher={Elsevier}
}

@book{markowich2012semiconductor,
  title={Semiconductor equations},
  author={Markowich, Peter A and Ringhofer, Christian A and Schmeiser, Christian},
  year={2012},
  publisher={Springer Science \& Business Media}
}

@article{weinbub2018recent,
  title={Recent advances in Wigner function approaches},
  author={Weinbub, Josef and Ferry, DK},
  journal={Applied Physics Reviews},
  volume={5},
  number={4},
  year={2018},
  publisher={AIP Publishing}
}

@article{frensley1990boundary,
  title={Boundary conditions for open quantum systems driven far from equilibrium},
  author={Frensley, William R},
  journal={Reviews of Modern Physics},
  volume={62},
  number={3},
  pages={745},
  year={1990},
  publisher={APS}
}

@article{hu2022kinetic,
  title={Kinetic investigations of nonlinear electrostatic excitations in quantum plasmas},
  author={Hu, Tian-Xing and Liang, Jiong-Hang and Sheng, Zheng-Mao and Wu, Dong},
  journal={Physical Review E},
  volume={105},
  number={6},
  pages={065203},
  year={2022},
  publisher={APS}
}

@article{jiang2023numerical,
  title={Numerical Study of Transient Wigner--Poisson Model for RTDs: Numerical Method and Its Applications},
  author={Jiang, Haiyan and Lu, Tiao and Yao, Wenqi and Zhang, Weitong},
  journal={SIAM Journal on Scientific Computing},
  volume={45},
  number={4},
  pages={A1766--A1788},
  year={2023},
  publisher={SIAM}
}

@article{einkemmer2023robust,
  title={A robust and conservative dynamical low-rank algorithm},
  author={Einkemmer, Lukas and Ostermann, Alexander and Scalone, Carmela},
  journal={Journal of Computational Physics},
  volume={484},
  pages={112060},
  year={2023},
  publisher={Elsevier}
}

@article{coughlin2024robust,
  title={Robust and conservative dynamical low-rank methods for the Vlasov equation via a novel macro-micro decomposition},
  author={Coughlin, Jack and Hu, Jingwei and Shumlak, Uri},
  journal={Journal of Computational Physics},
  volume={509},
  pages={113055},
  year={2024},
  publisher={Elsevier}
}

@article{cai2012quantum,
  title={Quantum hydrodynamic model by moment closure of Wigner equation},
  author={Cai, Zhenning and Fan, Yuwei and Li, Ruo and Lu, Tiao and Wang, Yanli},
  journal={Journal of mathematical physics},
  volume={53},
  number={10},
  year={2012},
  publisher={AIP Publishing}
}

@article{li2014numerical,
  title={Numerical validation for high order hyperbolic moment system of Wigner equation},
  author={Li, Ruo and Lu, Tiao and Wang, Yanli and Yao, Wenqi},
  journal={Communications in Computational Physics},
  volume={15},
  number={3},
  pages={569--595},
  year={2014},
  publisher={Cambridge University Press}
}

@article{barletti2012derivation,
  title={Derivation of isothermal quantum fluid equations with Fermi-Dirac and Bose-Einstein statistics},
  author={Barletti, Luigi and Cintolesi, Carlo},
  journal={Journal of Statistical Physics},
  volume={148},
  number={2},
  pages={353--386},
  year={2012},
  publisher={Springer}
}

@misc{christlieb2026structurepreservingadaptiverankapproachhighdimensional,
      title={A Structure-preserving Adaptive-Rank Approach to the High-Dimensional Wigner-Poisson System}, 
      author={Andrew J. Christlieb and Sining Gong and Jing-Mei Qiu and Nanyi Zheng},
      year={2026},
      eprint={2606.15067},
      archivePrefix={arXiv},
      primaryClass={math.NA},
      url={https://arxiv.org/abs/2606.15067}, 
}
\end{document}